\theoremstyle{plain}
\newtheorem{theorem}{Theorem}[section]
\newtheorem{lemma}[theorem]{Lemma}
\newtheorem{corollary}[theorem]{Corollary}
\newtheorem{proposition}[theorem]{Proposition}
\theoremstyle{definition}
\newtheorem{remark}[theorem]{Remark}
\newtheorem{definition}[theorem]{Definition}
\newtheorem{examples}[theorem]{Examples}
\numberwithin{equation}{section}
\definecolor{azul}{rgb}{0.1,0.6,0.86}
\definecolor{bluee}{rgb}{0,0.33,0.55}
\definecolor{naranja}{RGB}{249,153,96}
\def\noi{\noindent}
\def\bdem{\begin{proof}}
\def\edem{\end{proof}}
\def\bm{\left(\begin{array}}
\def\em{\end{array}\right)}
\def\ben{\begin{enumerate}}
\def\een{\end{enumerate}}
\def\barr{\begin{array}}
\def\earr{\end{array}}
 \def\bit{\begin{itemize}}
\def\eit{\end{itemize}}
\def\beq{\begin{equation}}
\def\eeq{\end{equation}}
\def\bdes{\begin{description}}
\def\edes{\end{description}}
\def\eps{\varepsilon}
\def\fii{\varphi }
\def\la{\lambda}
\def\w{\omega}
\def\N{\mathbb{N}}
\def\R{\mathbb{R}}
\def\cR{{\mathcal R}}
\newcommand{\peso}[1]{ \quad \mbox{  #1 } \quad }
\newcommand{\norm}[1]{\left\lVert#1\right\rVert}
\newcommand{\parenthesis}[1]{\left( #1 \right)}
\newcommand{\sqbracket}[1]{\left[ #1 \right]}
\newcommand{\Rn}{{\mathbb{R}^n}}
\def\llouno{{\Lambda^1(\w)}}
\def\lloinuno{{\Lambda^{1,\infty}(\w)}}
\newcommand{\bs}[1]{[#1]_{B^*_\infty}}
\newcommand{\bunor}[1]{[#1]_{B^\mathcal{R}_1}}
\def\bdem{\begin{proof}}
\def\edem{\end{proof}}
\newcommand{\bsq}[2]{[#1]_{B^*_{#2}}}
\begin{document}

\baselineskip=17pt    

\title{From weak type weighted inequality to pointwise estimate for the decreasing rearrangement} 
\author{Elona Agora, Jorge Antezana, Sergi Baena-Miret, Mar\'{\i}a J. Carro} 

\address{E. Agora, Instituto Argentino de Matem\'atica ``Alberto P. Calder\'on'', 
1083 Buenos Aires, Argentina.}
\email{elona.agora@gmail.com}

\address{J. Antezana, Department of Mathematics, 
Faculty of Exact Sciences, 
National University of La Plata, 
1900 La Plata, Argentina,  \and
Instituto Argentino de Matem\'atica ``Alberto P. Calder\'on'', 
1083 Buenos Aires, Argentina.}
\email{antezana@mate.unlp.edu.ar}

\address{S. Baena-Miret, Department of Mathematics and Informatic, University of Barcelona, Barcelona, Spain.}
\email{sergibaena@ub.edu}

\address{M. J. Carro, Department of Applied Mathematics and Mathematical Analysis,
Complutense University of Madrid, Madrid, Spain.}
\email{mjcarro@ucm.es}

\subjclass[2010]{42B25, 46E30, 47A30}
\keywords{Calder\'on type operators, weighted Lorentz spaces, decreasing rearrangement, rough operators, Fefferman-Stein inequality, Bochner-Riesz operator, sparse operators, Fourier multipliers}
\thanks{This work has been partially supported by Grants MTM2016-75196-P (MINECO / FEDER, UE), PIP-152 (CONICET), PICT 2015-1505 (ANPCYT), and 11X829 (UNLP)}

\begin{abstract} 
We shall prove pointwise estimates for the decreasing rearrangement of $Tf$, where $T$ covers a wide range of interesting operators in Harmonic Analysis such as operators satisfying a Fefferman-Stein inequality, the Bochner-Riesz operator, rough operators, sparse operators, Fourier multipliers, etc. In particular, our main estimate is of the form 
$$
(Tf)^*(t) \le C\parenthesis{\frac 1t\int_0^tf^*(s)\,ds + \int_t^\infty \left( 1 + \log\frac st \right)^{- 1}\varphi\parenthesis{1 + \log\frac st}f^*(s)\,\frac{ds}s},
$$
where $\varphi$ is determined by the Muckenhoupt $A_p$-weight norm behaviour of the operator. 
\end{abstract}

\date{\today}

\maketitle

\pagestyle{headings}\pagenumbering{arabic}\thispagestyle{plain}

\markboth{From weak to pointwise}
{E. Agora, J. Antezana, S. Baena-Miret and M.J. Carro}

\tableofcontents

\section{Introduction} 

There are many interesting operators in Harmonic Analysis satisfying that, for every $u\in A_1$, 
\begin{equation}\label{bound}
T:L^1(u)\longrightarrow  L^{1, \infty}(u)
\end{equation}
is bounded with constant less than or equal to $C||u||_{A_1}^k$, $k \in \N$, where we recall that $A_1$ is the class of Muckenhoupt weights such that
\begin{equation}\label{mmhh}
Mu(x)\le C u(x), \quad \mbox{a.e. } x \in \mathbb R^n.
\end{equation}
Here, $M$ is the Hardy-Littlewood maximal operator defined by 
$$
Mf(x)=\sup_{Q\ni x}\frac{1}{|Q|}\int_Q |f(y)|dy, \qquad f \in L^1_{\text{loc}}(\Rn), \hspace{2mm} x\in \Rn,
$$
where the supremum is taken over all cubes $Q\subseteq\mathbb R^n$ containing $x$, and $||u||_{A_1}$ is the infimum of all constants $C$ in \eqref{mmhh}. In particular, it is known \cite{m:m} that $M$ satisfies \eqref{bound} with $||M||_{L^1(u)  \longrightarrow  L^{1, \infty}(u)} \le c_n ||u||_{A_1}$. 

Other examples of such operators are the Hilbert transform, the Riesz transform and,  more generally, any Calder\'on-Zygmund operator (see for instance \cite{lop:lop}). Also, sparse operators \cite{hp:hp}, the Bochner-Riesz operator $B_\lambda$ at the critical index (that is, $\lambda=\frac{n-1}2$) \cite{lprr:lprr, v:v}, among many others, are known to satisfy  condition \eqref{bound}. 

The main purpose of this paper is to prove pointwise estimates for the decreasing rearrangement of $Tf$ with respect to the Lebesgue measure. In particular, our first main result is the following: 

\begin{theorem}\label{Teo-main1}
Let $T$ be a sublinear operator such that, for every $u\in A_1$ and some  $k \in \mathbb{N}$, $T$ satisfies \eqref{bound} with constant less than or equal to $C||u||_{A_1}^k$. 
Then,  for every $t>0$ and every measurable function $f$, 
\begin{equation*}%\label{estimateTeo3}
(Tf)^*(t)\leq C\parenthesis{\frac 1t \int_0^t f^*(s) \, ds + \int_t^\infty \left(1 + \log\frac st\right)^{k - 1} f^*(s) \, \frac{ds}s}.     
\end{equation*}

\end{theorem}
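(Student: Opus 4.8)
The plan is to exploit the weak-type inequality \eqref{bound} by choosing, for each fixed $t>0$ and each level $y>0$, a judicious $A_1$ weight $u$ adapted to the set $\{|Tf|>y\}$ and to the function $f$, and then to optimize. The natural first move is to write, for $t>0$,
\[
(Tf)^*(t)\,=\,\inf\{y>0:\ \lambda_{Tf}(y)\le t\},
\]
so that it suffices to show that $\lambda_{Tf}(y)\le t$ whenever $y$ exceeds a constant times the right-hand side quantity. To estimate $\lambda_{Tf}(y)=|\{|Tf|>y\}|$, I would split $f=f\chi_{\{|f|>f^*(t)\}}+f\chi_{\{|f|\le f^*(t)\}}=:f_0+f_1$, treat $Tf_0$ and $Tf_1$ separately by sublinearity, and for each piece apply \eqref{bound} with a weight $u$ of the form $u=(M h)^{\delta}$ for a suitable $h$ and $0<\delta<1$; by the Coifman--Rochberg lemma such $u$ lies in $A_1$ with $\|u\|_{A_1}\le c_n/(1-\delta)$, and this is precisely the mechanism that will convert the power $k$ in $\|u\|_{A_1}^k$ into the logarithmic factor $(1+\log(s/t))^{k-1}$ after integrating in $\delta$ (or equivalently summing a geometric-type series in $1-\delta\sim (\log(s/t))^{-1}$).

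Concretely, for the "local" part $f_1$, which is bounded by $f^*(t)$, I expect the term $\frac1t\int_0^t f^*(s)\,ds$ to appear: one applies the weak-type bound on a weight comparable to $\chi_{E}$ with $|E|\sim t$ (e.g. $E$ a near-extremal set for $(Tf)^*(t)$), uses $\int_E |Tf_1|\,u\lesssim \|u\|_{A_1}^k\|f_1\|_{L^1(u)}\lesssim \|u\|_{A_1}^k\int f^*\chi_{(0,t)}$, and controls $\|u\|_{A_1}$ by an absolute constant here since no scale separation is involved. For the "global" part $f_0$, I would decompose the outer integral dyadically: write $\{|f|>f^*(t)\}$ according to the layers $\{f^*(2^{j+1}t)<|f|\le f^*(2^j t)\}$ for $j\ge 0$ (or use $\int_t^\infty \frac{ds}{s}$ directly), and on the $j$-th layer apply \eqref{bound} with a weight whose $A_1$-constant is allowed to grow like $j\sim \log(s/t)$; the power $k$ then yields $j^{k}$, and after dividing by the scale $2^j t$ coming from $\lambda$ and resumming one is left with $\int_t^\infty (1+\log(s/t))^{k-1} f^*(s)\,ds/s$. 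The exponent drops from $k$ to $k-1$ because one of the $j$ factors is absorbed by the geometric gain $2^{-j}$ in the measure estimate when passing from $\lambda_{Tf}(y)$ back to $(Tf)^*(t)$.

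The main obstacle is the choice of weight: one needs a single $A_1$ weight (or a family indexed by $\delta$) that simultaneously (i) is large on the target set $\{|Tf|>y\}\cap$ (a chosen test set of measure $\sim t$), so that the left-hand side of \eqref{bound} controls $y\cdot t$, and (ii) has $L^1$ norm against $|f_0|$ or $|f_1|$ controlled by the corresponding piece of $\int f^* $, and (iii) has $A_1$ constant of the right order (absolute for the local part, logarithmic for each dyadic layer of the global part). The Coifman--Rochberg construction $u=(M(\chi_{F}))^{\delta}$, with $F$ built from the relevant pieces and $1-\delta$ tuned to the dyadic index, is the candidate; verifying that $\|f\chi_{\text{layer}}\|_{L^1(u)}$ is controlled — essentially a statement that $M(\chi_F)^{\delta}$ is not too large off $F$, quantitatively in $\delta$ — is the technical heart and the step I would spend the most care on. Once the weighted estimates for the two pieces are in hand, summing the dyadic contributions and taking the infimum over admissible $y$ gives the stated pointwise bound.
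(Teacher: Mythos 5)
Your overall architecture (split $f$ at height $f^*(t)$, use a one-parameter family of $A_1$ weights, and let the optimal parameter behave like $(\log(s/t))^{-1}$) is in the right spirit, but the specific mechanism you propose loses one power of the logarithm and therefore does not prove the stated bound. Concretely, take $f=\chi_E$ and $F\subseteq\{|Tf|>y\}$ with $|F|=t<|E|$, and set $u=(M\chi_F)^\delta$. Then $u\ge 1$ a.e.\ on $F$, $\|u\|_{A_1}\lesssim (1-\delta)^{-1}$, and by Hardy--Littlewood (essentially sharp when $E$ and $F$ are concentric balls)
$$
\int_{E}(M\chi_F)^\delta\,dx\lesssim\int_0^{|E|}\min\Bigl(1,\bigl(\tfrac{t}{s}\bigr)^{\delta}\Bigr)\,ds\approx t+\frac{t^{\delta}|E|^{1-\delta}}{1-\delta},
$$
so \eqref{bound} yields $y\lesssim(1-\delta)^{-k-1}(|E|/t)^{1-\delta}$, and optimizing in $\delta$ gives $(T\chi_E)^*(t)\lesssim(1+\log(|E|/t))^{k+1}$. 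The theorem requires $(1+\log(|E|/t))^{k}$, since $\int_t^{|E|}(1+\log\frac st)^{k-1}\frac{ds}{s}\approx(1+\log\frac{|E|}{t})^{k}$, and for $k=1$ (e.g.\ the Hilbert transform of an interval) that target is sharp, so the extra logarithm is a genuine loss rather than slack in the computation. The offending factor $(1-\delta)^{-1}$ comes from $\int_t^{|E|}s^{-\delta}ds$ and is not removable by tuning $h$ in $u=(Mh)^\delta$; avoiding it is precisely the content of the transference result the paper relies on (Theorem \ref{main1}, quoted from \cite{bc:bc}), which gives $T:\Lambda^1(\w)\to\Lambda^{1,\infty}(\w)$ with constant $C_1[\w]_{B_1^{\cR}}\,\varphi(C_2\bs{\w})$; testing this only on the power weights $\w(t)=t^{\tau-1}$, for which $[\w]_{B_1^{\cR}}=1$ and $\bs{\w}=\tau^{-1}$, gives the clean estimate $(T\chi_E)^*(t)\lesssim\tau^{-k}(|E|/t)^{\tau}$ before optimizing in $\tau$.

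A second gap is the mechanism you offer for the exponent $k-1$. If you split the far part of $f$ into dyadic layers $f_j$ and apply \eqref{bound} to each $Tf_j$ with a weight of $A_1$ constant $\sim j$, sublinearity forces you to distribute the level as $\sum_j\epsilon_j y$ and sum $|F\cap\{|Tf_j|>\epsilon_j y\}|$; since $\|f_j\|_{L^1(u_j)}$ has no geometric decay in $j$ in general, the resulting sum $\sum_i a_i\sum_{j\ge i}(j+1)^{k}\epsilon_j^{-1}$ is not controlled by $\sum_i a_i(1+i)^{k-1}$ (each term already exceeds $(1+i)^k$), so no factor of $j$ is ``absorbed.'' In the paper the $k-1$ is not produced by any gain: the pointwise bound for indicators carries exponent $k$, and $k-1$ is simply the integrand whose conjugate-Hardy integral reproduces $(1+\log)^{k}$ (Lemma \ref{comparables}). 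The passage from indicators to general $f$ --- the step where the infimum over the parameter must be moved inside the integral --- is done by the Bennett--Sharpley route: nested simple functions give $(Tf)^{**}\lesssim S_{1,\infty,\varphi}(f^{**})$, and then the Calder\'on splitting $f=g+h$ at height $f^*(t)$ together with Lemma \ref{comparables 2} finishes. Your splitting $f=f_0+f_1$ is the right start, but without the indicator/simple-function reduction the optimization in $\delta$ cannot be performed pointwise in $s$.
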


This pointwise estimate is very interesting since obviously it has, as a consequence, boundedness properties of such operators on rearrangement invariant spaces (for more details, we refer to \cite{bs:bs}). In particular, it allows us to characterize the weights $\w$ for which they are bounded on weighted Lorentz spaces $\Lambda^{p,q}(\w)$ \cite{crs:crs}. 

\medskip

Now, on some occasions, the behaviour of the constant has been improved from, let us say, $||u||_{A_1}^{1+\varepsilon}$, $\varepsilon > 0$, to an expression of the form $ \varphi(||u||_{A_1})$, where $\varphi$ is not a power function. This is, for example, the case when $T$ is a Calder\'on-Zygmund operator,  where the best function $\varphi$ known up to now  is $\varphi(t)=t (1+\log^+t)(1+\log^+\log^+ t)$ \cite{lop:lop}. 

\medskip

\noindent In order to cover this important class of operators we shall, in fact, prove a more general version of Theorem \ref{Teo-main1} (see Definition \ref{admissible} for the concept of
admissible function).

\begin{theorem}\label{Teo-main1-bis}
Let $T$ be a sublinear operator such that, for every $u\in A_1$, 
$$ 
T:L^1(u)\longrightarrow  L^{1, \infty}(u),
$$
with constant less than or equal to $\varphi(||u||_{A_1})$, where  $\fii$ is an admissible  function. Then, for every $t>0$ and every measurable function $f$, 
\begin{equation}\label{dec}
(Tf)^*(t)\leq C\parenthesis{ 
 \frac 1t \int_0^t f^*(s) \, ds + \int_t^\infty \left(1 + \log\frac st\right)^{-1}\fii\left(1 + \log\frac st\right)  f^*(s) \, \frac{ds}s}.    
\end{equation}
\end{theorem}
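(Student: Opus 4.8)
The plan is to derive the pointwise bound directly from the weighted weak-type hypothesis, applied to the pieces of a dyadic decomposition of $f$ adapted to the scale $t$. Fix $t>0$. Since $(Tf)^*(t)=\sup\{\lambda>0:\ |\{|Tf|>\lambda\}|>t\}$, it suffices to bound every such $\lambda$ by a constant multiple of the right-hand side of \eqref{dec}; by a routine truncation one may also assume $f\in L^\infty(\Rn)$ with $|\operatorname{supp}f|<\infty$, so that all level sets of $Tf$ have finite measure.

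Decompose $f=\sum_{j\ge0}f_j$ with $f_0=f\chi_{\{|f|>f^*(t)\}}$ and, for $j\ge1$, $f_j=f\chi_{\{f^*(2^jt)<|f|\le f^*(2^{j-1}t)\}}$. Then $|\operatorname{supp}f_0|\le t$, while for $j\ge1$ one has $|\operatorname{supp}f_j|\le 2^jt$, $\|f_j\|_\infty\le f^*(2^{j-1}t)$ and $f_j^*\le f^*$. By sublinearity $|Tf|\le\sum_j|Tf_j|$, hence $d_{Tf}(\lambda)\le\sum_j d_{Tf_j}(\lambda_j)$ whenever $\sum_j\lambda_j\le\lambda$; equivalently $(Tf)^*(t)\le\sum_{j\ge0}(Tf_j)^*(t_j)$ for any $t_j>0$ with $\sum_j t_j= t$, and I would take $t_0=t/2$ with $t_j\sim 2^{-j}t$ for $j\ge1$.

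To estimate $(Tf_j)^*(t_j)$ I would use the hypothesis with a Coifman--Rochberg weight: choose $E_j\subseteq\{|Tf_j|>(Tf_j)^*(t_j)-\varepsilon\}$ with $|E_j|=t_j$ and put $w_j=(M\chi_{E_j})^{\delta_j}$, so that $\|w_j\|_{A_1}\le c_n/(1-\delta_j)$ and $w_j^*(s)\le\min\parenthesis{1,(c_nt_j/s)^{\delta_j}}$. The hypothesis together with the Hardy--Littlewood rearrangement inequality gives
$$(Tf_j)^*(t_j)\le\frac{\varphi(\|w_j\|_{A_1})}{t_j}\int_{\Rn}|f_j|\,w_j\le\frac{\varphi(\|w_j\|_{A_1})}{t_j}\int_0^\infty f_j^*(s)\min\parenthesis{1,\Big(\tfrac{c_nt_j}{s}\Big)^{\delta_j}}ds.$$
For $j=0$ take $\delta_0=\tfrac12$: since $f_0^*\le f^*\chi_{[0,t)}$ and $\varphi(\|w_0\|_{A_1})=\varphi(2c_n)$ is a fixed constant, this yields $(Tf_0)^*(t/2)\le \tfrac{C}{t}\int_0^t f^*(s)\,ds$, the first term of \eqref{dec}. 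For $j\ge1$ the decisive choice is $1-\delta_j\sim(1+j)^{-1}$, so that $\|w_j\|_{A_1}\sim 1+j\sim 1+\log(2^jt/t)$; feeding in $f_j^*\le f^*(2^{j-1}t)$ supported on an interval of length $\le 2^jt$, the last integral is controlled by $f^*(2^{j-1}t)$ times a factor depending on $1-\delta_j$ and on $2^jt/t_j$, and one arrives at a bound for $(Tf_j)^*(t_j)$ expressed through $\varphi(1+j)$ and $f^*(2^{j-1}t)$.

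Summing the $j\ge1$ bounds and using that $\varphi$ is admissible — increasing, doubling, and regular in the sense of Definition \ref{admissible} — the resulting series is comparable to $\int_t^\infty(1+\log\tfrac st)^{-1}\varphi(1+\log\tfrac st)f^*(s)\,\tfrac{ds}s$, because on the block $s\in[2^jt,2^{j+1}t)$ one has $1+\log\tfrac st\sim 1+j$ and $\tfrac{ds}s\sim\tfrac{ds}{2^jt}$. Together with the $j=0$ term and the passage $\lambda\uparrow(Tf)^*(t)$, this gives \eqref{dec}. The main obstacle is precisely the calibration of the exponents $\delta_j$: taking $\delta_j$ too close to $1$ makes $\varphi(\|w_j\|_{A_1})$ too large, whereas taking it too far leaves the support size $2^jt$ of $f_j$ — which far exceeds $t_j$ — unabsorbed. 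The crux of the theorem is that the admissibility of $\varphi$ is exactly the hypothesis under which $\delta_j$ can be chosen so that the $j$-th estimate matches the $j$-th block of the tail integral in \eqref{dec} without an extraneous logarithmic factor.
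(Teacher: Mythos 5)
There is a genuine quantitative gap: as calibrated, your scheme proves the estimate with $\left(1+\log\frac st\right)\fii\left(1+\log\frac st\right)$ in the tail rather than $\left(1+\log\frac st\right)^{-1}\fii\left(1+\log\frac st\right)$, i.e.\ it loses a factor $\left(1+\log\frac st\right)^{2}$ (for $\fii(x)=x^k$ this means exponent $k+1$ instead of $k-1$). The loss occurs in two distinct places. First, for $j\ge1$ the support of $f_j$ has measure $2^jt$ while $w_j$ is adapted to a set of measure $t_j\sim2^{-j}t$, so
$$
\frac1{t_j}\int_0^{2^jt}\min\Bigl(1,\bigl(\tfrac{c_nt_j}{s}\bigr)^{\delta_j}\Bigr)\,ds\;\approx\;1+\frac{1}{1-\delta_j}\Bigl(\frac{2^jt}{t_j}\Bigr)^{1-\delta_j},
$$
and minimizing $\frac1u R^{u}$ over $u=1-\delta_j$ for $R=2^jt/t_j\approx4^{j}$ gives $u\sim(1+j)^{-1}$ and a minimum value $\sim(1+j)$, not $O(1)$; the ``factor depending on $1-\delta_j$ and on $2^jt/t_j$'' that you leave unquantified is exactly this $(1+j)$, so the best this step yields is $(Tf_j)^*(t_j)\lesssim(1+j)\,\fii(1+j)\,f^*(2^{j-1}t)$. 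Second, and independently, the closing series--integral comparison is wrong: since $f^*(2^jt)\approx\frac1{\log2}\int_{2^jt}^{2^{j+1}t}f^*(s)\,\frac{ds}s$, a series $\sum_j c(j)f^*(2^jt)$ with slowly varying coefficients matches the integral with kernel $c\bigl(\log_2\frac st\bigr)\frac{ds}s$; hence even the optimistic bound $\sum_j\fii(1+j)f^*(2^{j}t)$ is comparable to $\int_t^\infty\fii\bigl(1+\log\frac st\bigr)f^*(s)\frac{ds}s$, not to the target. Concretely, for $f=\chi_{[0,N]}$ and $\fii(x)=x$ your sum is $\gtrsim\bigl(\log\frac Nt\bigr)^2$ while the right-hand side of \eqref{dec} is $\sim\log\frac Nt$ by Lemma \ref{comparables}. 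The dyadic-in-support decomposition is intrinsically lossy here: each block pays a full $\fii(1+j)$, whereas the sharp kernel $(1+\log\frac st)^{-1}\fii(1+\log\frac st)\frac{ds}s$ is essentially the \emph{derivative} of $\fii(1+\log\frac st)$, so only the total integral, not each block, should contribute a $\fii$.

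The paper avoids both losses by a different mechanism: via Theorem \ref{main1} the hypothesis is converted into $\Lambda^1(\w)\to\Lambda^{1,\infty}(\w)$ bounds for the power weights $\w(t)=t^{\tau-1}$, which give the single-$\fii$ estimate $(T\chi_E)^*(t)\lesssim\fii\bigl(1+\log\frac{|E|}t\bigr)$ for $t<|E|$ after optimizing over $\tau\in(0,1]$ (Lemma \ref{L infimo log log}); the passage to general $f$ then sums over the \emph{layers} of a simple function $f=\sum_ja_j\chi_{F_j}$ (a lossless, vertical decomposition) and finishes with the splitting $f=g+h$ at height $f^*(t)$. To salvage your direct approach you would need to replace the horizontal dyadic decomposition of the support by such a layer-cake decomposition, or otherwise exhibit a mechanism recovering the two logarithms; admissibility of $\fii$ alone does not do it.
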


To prove inequality \eqref{dec}, our starting point was the following result (see \cite{bc:bc}). See Section \ref{lemmas} for the definitions of the classes of weights $B_1^{\mathcal R}$ and $B_{\infty}^{*}$. 

\begin{theorem}\label{main1} 
Let $T$ be an operator satisfying that,  for  every $u \in A_1$, 
$$
T:L^{1}(u)  \rightarrow L^{1,  \infty}(u)
$$ is bounded, with constant less than or equal to $\varphi(||u||_{A_1})$, where $\varphi$ is an increasing function in $[1,\infty)$. Then, for every $\w \in B_1^{\mathcal R}\cap B_{\infty}^{*}$, 
 $$
 T:\Lambda^1(\w)  \rightarrow \Lambda^{1, \infty}(\w)
 $$
  is bounded with constant less than or equal to $C_1[\w]_{B_1^{\mathcal \cR}}\varphi\left(C_2[\w]_{B_\infty^*} \right)$ for some positive constants $C_1$, $C_2$ independent of $\w$.   
\end{theorem}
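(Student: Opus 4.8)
The plan is to pass to the level-set formulation of the weak-type estimate and, at each fixed height, to manufacture an $A_1$ weight on $\Rn$ against which the hypothesis can be invoked. Writing $W(t)=\int_0^t\w$, one has the standard equivalence
$$
\norm{Tf}_{\Lambda^{1,\infty}(\w)}\ \approx\ \sup_{\lambda>0}\, \lambda\, W\bigl(|\{x\in\Rn:\,|Tf(x)|>\lambda\}|\bigr),
$$
so, after the usual reductions (one may take $f\ge 0$ bounded with support of finite measure), it is enough to fix $\lambda>0$, put $E=\{x:\,|Tf(x)|>\lambda\}$ and $m=|E|$, which we may assume finite, and prove
$$
\lambda\, W(m)\ \le\ C_1\,[\w]_{B_1^{\mathcal R}}\,\varphi\bigl(C_2\,[\w]_{B_\infty^*}\bigr)\,\norm{f}_{\Lambda^1(\w)}.
$$

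Everything hinges on constructing a weight $u=u_{f,E}\in A_1(\Rn)$ with three properties: $(i)$ $\norm{u}_{A_1}\le C_2[\w]_{B_\infty^*}$; $(ii)$ $u(E)\ge W(m)$; and $(iii)$ $\ds\int_{\Rn}f\,u\le C_1[\w]_{B_1^{\mathcal R}}\,\norm{f}_{\Lambda^1(\w)}$. Granting these, the proof closes in one line: since $u\in A_1$, the hypothesis gives $\lambda\,u(E)=\lambda\,u\bigl(\{|Tf|>\lambda\}\bigr)\le\varphi(\norm{u}_{A_1})\int_{\Rn}f\,u$, and combining with $(i)$--$(iii)$ and the monotonicity of $\varphi$ yields $\lambda\,W(m)\le\lambda\,u(E)\le\varphi\bigl(C_2[\w]_{B_\infty^*}\bigr)\,C_1[\w]_{B_1^{\mathcal R}}\,\norm{f}_{\Lambda^1(\w)}$.

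To produce $u$ I would take $u=u_1+\frac{W(m)}{m}\,(M\chi_E)^{1-\varepsilon}$ with $\varepsilon\approx 1/[\w]_{B_\infty^*}$, where $u_1=g(\omega_n|\cdot|^n)$ is the radially decreasing lift to $\Rn$ of a one-dimensional weight $g\ge f^*$ manufactured by a Rubio de Francia type iteration adapted to $\w$, enjoying $\int_0^\infty f^*g\lesssim\norm{f}_{\Lambda^1(\w)}$ and $(\mathcal R g)(t):=\frac1t\int_0^t g\le\kappa\,g(t)$ for some $\kappa\lesssim[\w]_{B_\infty^*}$ (see Section \ref{lemmas}). Then the Coifman--Rochberg theorem with exponent $1-\varepsilon$ gives $\norm{(M\chi_E)^{1-\varepsilon}}_{A_1}\lesssim 1/\varepsilon\approx[\w]_{B_\infty^*}$, while the lifting correspondence turns $\mathcal R g\le\kappa g$ into $\norm{u_1}_{A_1}\lesssim_n\kappa\lesssim[\w]_{B_\infty^*}$; since the $A_1$ constant is subadditive under sums of weights, $(i)$ follows. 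Property $(ii)$ is immediate from $M\chi_E\ge\chi_E$, whence $\int_E u\ge\frac{W(m)}{m}|E|=W(m)$. For $(iii)$ one uses the Hardy--Littlewood inequality $\int f\,u\le\int_0^\infty f^*u^*$, the bound $\int f\,u_1\le\int_0^\infty f^*g\lesssim\norm{f}_{\Lambda^1(\w)}$, the rearrangement estimate $(M\chi_E)^*(s)\lesssim\min(1,m/s)$, and then splits the integral of $f\cdot\frac{W(m)}{m}(M\chi_E)^{1-\varepsilon}$ at $s\approx m$: the part $\frac{W(m)}{m}\int_0^{cm}f^*\approx W(m)\,(\mathcal R f^*)(m)\le\sup_{t>0}W(t)\,(\mathcal R f^*)(t)\le[\w]_{B_1^{\mathcal R}}\norm{f}_{\Lambda^1(\w)}$ is precisely where membership in $B_1^{\mathcal R}$ is used, while the tail $s\gtrsim m$ is absorbed into $[\w]_{B_1^{\mathcal R}}\norm{f}_{\Lambda^1(\w)}$ by combining the decay $(m/s)^{1-\varepsilon}$ with the fact that $\w\in B_\infty^*$ forces $W(m)\log\frac rm\lesssim[\w]_{B_\infty^*}W(r)$.

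The real obstacle is this construction. One must keep $\norm{u}_{A_1}$ linear in $[\w]_{B_\infty^*}$ --- which is exactly what dictates the exponent $1-\varepsilon$ and the precise normalization of the Rubio de Francia iteration --- while simultaneously bounding $\int f\,u$ by $[\w]_{B_1^{\mathcal R}}\norm{f}_{\Lambda^1(\w)}$, which forces $g$ to be tailored to $f^*$ rather than chosen once and for all and requires the sharp form of the $B_\infty^*$ inequality to handle the tail term. The auxiliary facts underpinning it --- the equivalence between one-dimensional $B_\infty^*$-type conditions and $A_1$ bounds for radially decreasing weights, the Rubio de Francia algorithm in this setting, and the elementary properties of the classes $B_1^{\mathcal R}$ and $B_\infty^*$ --- are exactly what is assembled in Section \ref{lemmas}.
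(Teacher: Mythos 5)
First, a point of reference: this paper does not actually prove Theorem \ref{main1}; it is imported verbatim from \cite{bc:bc} and used as a black box (``our starting point was the following result''), so there is no internal proof to compare yours against. That said, your architecture is the standard one for a statement of this type and is surely close to the intended argument: fix the level set $E=\{|Tf|>\lambda\}$ with $m=|E|$, build an $A_1$ weight $u$ satisfying your (i)--(iii), and feed it into the hypothesis; the choice $\frac{W(m)}{m}(M\chi_E)^{1-\varepsilon}$ with $\varepsilon\approx 1/[\w]_{B_\infty^*}$, Coifman--Rochberg for (i), and $M\chi_E\ge\chi_E$ for (ii) are exactly the right devices. Two pieces of the sketch, however, do not hold up.

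The component $u_1$ is both incoherent and superfluous. You require a decreasing $g$ with $g\ge f^*$ and $\int_0^\infty f^*g\lesssim\|f\|_{\Lambda^1(\w)}$; these are incompatible, since the first forces $\int_0^\infty f^*g\ge\int_0^\infty (f^*)^2$, which is quadratic in $f$ while the right-hand side is linear (replace $f$ by $cf$ and let $c\to\infty$). Moreover $u_1$ contributes nothing to (ii) --- only the $(M\chi_E)^{1-\varepsilon}$ term charges $E$, since $E$ depends on $Tf$ and cannot be seen by a weight built from $f$ alone --- while it can only worsen (i) and (iii). It should simply be dropped.

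More seriously, the tail estimate in (iii) does not deliver the constant asserted in the theorem. Testing on $f^*=\chi_{[0,r)}$ with $r>m$, the tail equals $W(m)\,\varepsilon^{-1}\left((r/m)^{\varepsilon}-1\right)\ge W(m)\log(r/m)$, and the mechanism you invoke, $W(m)\log(r/m)\le[\w]_{B_\infty^*}W(r)$, produces the constant $[\w]_{B_\infty^*}$, not $[\w]_{B_1^{\mathcal R}}$; carrying the computation through with $\varepsilon\approx 1/[\w]_{B_\infty^*}$ and Lemma \ref{better decay} gives a bound $\lesssim\varepsilon^{-1}W(r)\approx[\w]_{B_\infty^*}W(r)$. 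The two constants are genuinely incomparable: for $\w(t)=t^{\tau-1}$ one has $[\w]_{B_1^{\mathcal R}}=1$ while $\sup_{r>m}W(m)\log(r/m)/W(r)=1/(e\tau)=[\w]_{B_\infty^*}/e$. So your construction yields boundedness with constant of the order $([\w]_{B_1^{\mathcal R}}+[\w]_{B_\infty^*})\,\varphi(C_2[\w]_{B_\infty^*})$ rather than $C_1[\w]_{B_1^{\mathcal R}}\varphi(C_2[\w]_{B_\infty^*})$. This is not cosmetic: the multiplicative structure of the constant is precisely what Theorem \ref{Teo-1} converts into the kernel $(1+\log(s/t))^{-1}\varphi(1+\log(s/t))$, and an extra factor of $[\w]_{B_\infty^*}$ costs exactly the factor $(1+\log(s/t))^{-1}$ in \eqref{dec}. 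To recover the stated constant one needs a finer argument (or a finer weight) than the one sketched here.
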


Let us now see an easy proposition which helps to motivate what follows: 

\begin{proposition}
Let $T$ be an operator satisfying that, for every $\w\in B_1^{\cR}$,
\begin{equation}\label{uuu1}
T:\llouno\to\lloinuno
\end{equation}
is bounded with constant less than or equal to $\fii(\bunor{\w})$. Then, for every $t>0$, 
\begin{equation}\label{uuu}
(Tf)^*(t)\leq \frac{\fii(1) }{t}\int_0^t f^*(s)\, ds. 
\end{equation}
\end{proposition}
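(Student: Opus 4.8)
The plan is to choose a clever family of weights $\w\in B_1^{\cR}$ for which the norm of $f$ in $\llouno$ and the norm of $Tf$ in $\lloinuno$ can be controlled, and then optimize. Recall that $\llouno$ has (quasi-)norm $\|g\|_{\llouno}=\int_0^\infty g^*(s)\,\w(s)\,ds$, where I write $\w(s)$ for the weight viewed as a function on $(0,\infty)$ and $W(t)=\int_0^t\w$ for its primitive, while $\lloinuno$ has the functional $\|g\|_{\lloinuno}=\sup_{t>0}g^*(t)W(t)$. Fix $t>0$. The natural candidate is the characteristic weight $\w=\chi_{(0,t)}$, so that $W(s)=\min(s,t)$; this is the extremal $B_1^{\cR}$ weight, and a direct check gives $\bunor{\w}$ bounded by an absolute constant (hence $\fii(\bunor{\w})\le\fii(1)$ up to the monotonicity/normalization of $\fii$, since $\fii$ is increasing on $[1,\infty)$). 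For this weight,
\[
\|f\|_{\llouno}=\int_0^t f^*(s)\,ds,
\]
and, from the definition of the weak functional,
\[
(Tf)^*(t)\,W(t)=(Tf)^*(t)\,t\le \|Tf\|_{\lloinuno}.
\]

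Putting these together with the assumed boundedness \eqref{uuu1} yields
\[
t\,(Tf)^*(t)\le \|Tf\|_{\lloinuno}\le \fii(\bunor{\chi_{(0,t)}})\,\|f\|_{\llouno}\le \fii(1)\int_0^t f^*(s)\,ds,
\]
which is exactly \eqref{uuu} after dividing by $t$. The only genuinely substantive point is verifying that $\bunor{\chi_{(0,t)}}$ is bounded by a constant independent of $t$; this should follow immediately once the $B_1^{\cR}$ functional is recalled from Section \ref{lemmas} — by scaling $s\mapsto s/t$ the constant for $\chi_{(0,t)}$ equals that for $\chi_{(0,1)}$, so it is a fixed number, and one absorbs it into $\fii(1)$ using that $\fii$ is increasing (replacing $\fii(1)$ by $\fii$ of that fixed constant if it exceeds $1$, which just changes the constant $\fii(1)$ in the statement to the corresponding value).

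The main obstacle, such as it is, is purely bookkeeping: making sure the normalization conventions for the $\llouno$ quasi-norm and for $\bunor{\cdot}$ match those fixed in Section \ref{lemmas}, and confirming that $\chi_{(0,t)}$ (or a suitable decreasing smoothing of it, if the definition requires strict positivity) genuinely lies in $B_1^{\cR}$ with a uniformly bounded constant. No delicate analysis is needed beyond that; the proposition is essentially the specialization of the abstract weighted estimate to the single test weight that "sees" the level $t$.
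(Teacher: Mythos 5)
Your proposal is correct and is essentially identical to the paper's proof: the paper also tests the weak-type hypothesis on the weight $\w=\chi_{[0,t]}$, for which $W(t)=t$ and $\bunor{\w}=1$ exactly (your hedge about an absolute constant is unnecessary, since the $B_1^{\cR}$ supremum for this weight is precisely $1$). No gaps.
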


\bdem
The hypothesis implies that, for every $ \w \in B_1^\cR$ and every $t>0$, 
\begin{equation*} 
 (Tf)^*(t) W(t) \leq \fii(\bunor{\w}) \int_0^\infty f^*(s) \, \w(s)\, ds.
\end{equation*}
In particular, since $\w=\chi_{[0,t]}\in B_1^\cR$ and $\bunor{\w}=1$,  we get that
$$
(Tf)^*(t) \le  \frac{ \fii(1)}{t}\int_0^{t} f^*(s) \, ds.
$$
\edem

\begin{remark} \noindent i) If an operator $T$ satisfies \eqref{uuu}, we have that \eqref{uuu1} holds with constant less than or equal to $C[\w]_{B_1^{\cR}}$ and hence we can conclude that under the hypothesis of the previous theorem, $||T|| \leq \tilde C\min ( [\w]_{B_1^{\cR}}, \fii([\w]_{B_1^{\cR}}) )$, for some positive constant $\tilde C$ independent of $\w$. 

\noindent
ii) We also observe that the operator $T$ plays no role and hence the same can be formulated for couples of functions $(f, g)$ in the following sense: 
$$
||g||_{\Lambda^{1, \infty}(\w)} \le \fii([\w]_{B_1^{\cR}}) ||f||_{\Lambda^{1}(\w)}, \qquad \forall \w\in B_1^{\cR}, 
$$
implies that 
$$
g^*(t) \le \fii(1) f^{**}(t), \qquad \forall t>0.
$$
\end{remark}

Taking into account Theorem \ref{main1}, our next goal was to include the hypothesis $\w\in B^*_\infty$ in its statement:

\begin{theorem}\label{Teo-1}
Let $T$ be a sublinear operator and let $\varphi$ be an admissible function. Then, for every $\w\in B_1^{\cR}\cap B^*_\infty$,
$$
T:\llouno\to\lloinuno
$$
is bounded with norm less than or equal to $C\bunor{\w}\ \fii(\bs{\w})$ if and only if,  for every $t>0$, and every measurable function $f$, 
\begin{align*}
(Tf)^*(t)&\leq C\left(\frac 1t \int_0^t f^*(s) \, ds + \int_t^\infty \left(1 + \log\frac st\right)^{-1}\fii\left(1 + \log\frac st\right)  f^*(s) \, \frac{ds}s\right).   \nonumber
\end{align*}
\end{theorem}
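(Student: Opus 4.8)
The plan is to prove both implications by reducing them to the rearrangement-function framework already available, exploiting the fact that the operator $T$ itself is essentially irrelevant — as observed in the Remark following the motivating Proposition, everything can be phrased in terms of a pair $(f^*,(Tf)^*)$ of decreasing functions on $(0,\infty)$, or even for abstract pairs $(f,g)$. So I would first isolate, as a self-contained lemma, the equivalence: for a fixed decreasing $h$ on $(0,\infty)$ (playing the role of $(Tf)^*$) and a fixed decreasing $f^*$, one has the weighted weak-type bound
$$
\sup_{t>0} h(t)W(t)\ \le\ C\,\bunor{\w}\,\fii(\bs{\w})\,\norm{f^*}_{\Lambda^1(\w)}\qquad\text{for all }\w\in B_1^{\cR}\cap B_\infty^*
$$
if and only if the pointwise estimate \eqref{dec} holds with $(Tf)^*$ replaced by $h$. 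Granting this lemma, Theorem \ref{Teo-1} follows immediately by applying it with $h=(Tf)^*$ for each $f$, since $\norm{f}_{\Lambda^1(\w)}=\norm{f^*}_{\Lambda^1(\w)}$ and likewise $\norm{Tf}_{\Lambda^{1,\infty}(\w)}=\sup_t (Tf)^*(t)W(t)$ by definition of these spaces.

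For the \emph{easy direction} (pointwise estimate $\Rightarrow$ weighted bound), I would take the right-hand side of \eqref{dec}, multiply by $W(t)$, take the supremum over $t$, and estimate the two terms separately. The first term $\tfrac1t\int_0^t f^*$ is the Hardy operator applied to $f^*$; controlling $\sup_t \tfrac{W(t)}{t}\int_0^t f^*(s)\,ds$ against $\bunor{\w}\norm{f^*}_{\Lambda^1(\w)}$ is exactly what the $B_1^{\cR}$ condition is designed to do (this is the content of the definition of $[\w]_{B_1^{\cR}}$). The second term involves the kernel $K(s,t)=(1+\log\tfrac st)^{-1}\fii(1+\log\tfrac st)$ for $s>t$; here I would use the hypothesis that $\fii$ is admissible — in particular its submultiplicativity / doubling-type growth control built into Definition \ref{admissible} — together with the $B_\infty^*$ condition on $\w$, which precisely bounds averages of $W$ against $\bs{\w}$-weighted expressions, to show $\sup_t W(t)\int_t^\infty K(s,t) f^*(s)\tfrac{ds}s \le C\,\fii(\bs{\w})\,\norm{f^*}_{\Lambda^1(\w)}$. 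Summing the two estimates gives the claimed operator norm bound $C\,\bunor{\w}\,\fii(\bs{\w})$.

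For the \emph{hard direction} (weighted bound $\Rightarrow$ pointwise estimate) — which I expect to be the main obstacle — the idea is to test the weighted inequality against cleverly chosen weights. Fix $t>0$. To recover the Hardy term, test against $\w=\chi_{[0,t]}$ exactly as in the motivating Proposition, noting $\bunor{\w}$ and $\bs{\w}$ are both comparable to absolute constants, which yields $(Tf)^*(t)\le \tfrac Ct\int_0^t f^*$. To recover the logarithmic term, I would test against a weight supported on $[0,\infty)$ that behaves like $\chi_{[0,t]}$ near the origin but has a controlled polynomial-logarithmic tail for $s>t$ — a natural candidate is something like $\w_t(s)=\chi_{[0,t]}(s)+\chi_{(t,\infty)}(s)\,(1+\log\tfrac st)^{-a}\tfrac1s$ for a suitable exponent $a$, or a family of such weights optimized over $a$. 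The crux is the computation of $\bunor{\w_t}$ and especially of $\bs{\w_t}$: one must show $\bs{\w_t}$ stays comparable to $1+\log(\text{something})$ so that $\fii(\bs{\w_t})$ reproduces the factor $\fii(1+\log\tfrac st)$ in the kernel, and that $W_t(t)\gtrsim t$ so the left-hand side $(Tf)^*(t)W_t(t)$ is genuinely of size $(Tf)^*(t)\cdot t$. Combining the resulting inequality over the relevant range and splitting the integral $\int_t^\infty f^*(s)\tfrac{ds}s$ dyadically in $\log(s/t)$ — applying the tested inequality on each dyadic block and summing, using the admissibility of $\fii$ to control the sum of $\fii(2^j)2^{-j}$-type terms — should assemble the full second term of \eqref{dec}. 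The delicate points are verifying the $B_\infty^*$ and $B_1^{\cR}$ norms of the test weights and ensuring the dyadic summation converges, both of which lean on the precise definition of admissible function; I would state the needed properties of $\fii$ as a preliminary lemma in Section \ref{lemmas} before carrying this out.
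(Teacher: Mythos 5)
Your sufficiency direction follows essentially the paper's route: represent $S_{1,\infty,\fii}$ as an integral operator, reduce the $\Lambda^1(\w)\to\Lambda^{1,\infty}(\w)$ norm to the quantity $\sup_t\sup_r W(t)W(r)^{-1}\int_0^r k(t,s)\,ds$, and handle $r<t$ with $B_1^{\cR}$ and $r>t$ with $B_\infty^*$. One point you gloss over is essential, though: the definition of $B_\infty^*$ only gives the logarithmic decay $\overline{W}(\la)\lesssim \bs{\w}\,(\log\frac1\la)^{-1}$, which is not enough to absorb the factor $\fii\left(1+\log\frac rt\right)$. The paper first self-improves this to the power decay $\overline{W}(\la)\le e\,\la^{1/(e\bs{\w})}$ (Lemma \ref{better decay}) and then uses $\sup_{x\ge1}\fii(x)e^{-x/y}\lesssim\fii(y)$ (Lemma \ref{L sup log log}) to land exactly on $\fii(\bs{\w})$.

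The necessity direction has a genuine gap. Your premise that $T$ is irrelevant and the statement reduces to one about abstract pairs of decreasing functions is false for this implication: sublinearity is used in an essential way. Concretely, your weight-testing scheme cannot produce the target kernel. A single test weight $\w_t$ yields $(Tf)^*(t)W_t(t)\le C_{\w_t}\int_0^\infty f^*\w_t$ with a constant depending on $\w_t$ but not on $s$; to reproduce the kernel $\left(1+\log\frac st\right)^{-1}\fii\left(1+\log\frac st\right)\frac1s$, which grows in $s$, the weight itself would have to carry the growth $\fii\left(1+\log\frac st\right)$, and such a weight is not in $B_\infty^*$ (the integral $\int_t^r\fii\left(1+\log\frac st\right)\frac{ds}s$ picks up an extra factor $\log\frac rt$ relative to $W_t(r)$, so $\bs{\w_t}=\infty$). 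The dyadic variant does not repair this: each block inequality is an upper bound for the \emph{same} quantity $(Tf)^*(t)$, so "summing over blocks" is not a legitimate operation, and no single block inequality can hold (test $f^*=\chi_{[0,2^{j_0}t]}$). Weight-testing closes only for characteristic functions, where $\int_0^\infty(\chi_E)^*(s)s^{\tau-1}\,ds$ collapses to $|E|^\tau/\tau$ and one optimizes over the power weights $s^{\tau-1}$, $\tau\in(0,1]$ (Lemmas \ref{Bstar de potencias} and \ref{L infimo log log}). The paper then extends to simple functions $\sum_j a_j\chi_{F_j}$ with nested $F_j$ using sublinearity, which only controls $(Tf)^{**}$ by $S_{1,\infty,\fii}(f^{**})$, and finally removes the extra averaging via the Calder\'on splitting $f=g+h$ at height $f^*(t)$, bounding $(Tg)^*(t/2)$ by the unweighted weak inequality and $(Th)^{**}(t/2)$ by the previous step together with Lemma \ref{comparables 2}. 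These last two steps are the heart of the argument and are entirely missing from your proposal.
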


\medskip
Besides, from the proof of the above theorem, we obtain  the following result. 

\begin{corollary} \label{fip} Let $T$ be a sublinear operator such that, for some admissible function $\varphi$ and any $1\le p<\infty$,
$$
||Tf||_{L^{p, \infty}} \le C \varphi(p) ||f||_{L^{p,1}}, 
$$
with $C$ independent of $p$, then \eqref{dec} holds. 
\end{corollary}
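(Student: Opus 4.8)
\emph{Overview.} The plan is to upgrade the hypothesis to a scale of strong $L^p$ bounds, run the standard truncation at the level $f^*(t)$, and then optimize over $p$; all the difficulty sits in the optimization, which is exactly the mechanism used in the proof of Theorem~\ref{Teo-1}. \textbf{Step 1 (from $L^{p,1}\to L^{p,\infty}$ to $L^p\to L^p$).} For $2\le p<\infty$ pick $p_0<p<p_1$ with $p_0,p_1\asymp p$ and interpolation parameter $\theta$ bounded away from $0,1$; since the off--diagonal real interpolation functor gives $(L^{p_0,1},L^{p_1,1})_{\theta,r}=L^{q,r}=(L^{p_0,\infty},L^{p_1,\infty})_{\theta,r}$ whenever $p_0\neq p_1$, interpolating the hypothesis at $p_0$ and $p_1$ produces $T:L^q\to L^q$ (for $1/q=(1-\theta)/p_0+\theta/p_1$) with $\|T\|_{L^q\to L^q}\lesssim\varphi(p_0)^{1-\theta}\varphi(p_1)^{\theta}\lesssim\varphi(q)$, using that an admissible $\varphi$ is essentially doubling. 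The case $p=1$ of the hypothesis is the weak $(1,1)$ bound $\|Tf\|_{L^{1,\infty}}\lesssim\varphi(1)\|f\|_{L^1}$.

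\emph{Step 2 (truncation).} Fix $t>0$ and split $f=f_1+f_2$ with $f_1=f\chi_{\{|f|>f^*(t)\}}$ and $f_2=f\chi_{\{|f|\le f^*(t)\}}$, so that $f_1^*\le f^*\chi_{(0,t]}$ and $f_2^*\le\min\{f^*(t),f^*\}$. By sublinearity $(Tf)^*(2t)\le (Tf_1)^*(t)+(Tf_2)^*(t)$. The weak $(1,1)$ bound gives $(Tf_1)^*(t)\le \tfrac{C}{t}\|f_1\|_{L^1}\le \tfrac{C}{t}\int_0^t f^*(s)\,ds$, the first term of \eqref{dec}. For $f_2$, Step 1 together with $g^*(2t)\le(2t)^{-1/p}\|g\|_{L^p}$ and $f_2^*\le\min\{f^*(t),f^*\}$ give, for every $p\in(1,\infty)$,
$$(Tf_2)^*(2t)\ \le\ (2t)^{-1/p}\,\|Tf_2\|_{L^p}\ \lesssim\ \varphi(p)\Big(f^*(t)^p+\tfrac1t\int_t^\infty f^*(s)^p\,ds\Big)^{1/p}.$$

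\emph{Step 3 (optimization in $p$).} It remains to prove the pointwise inequality
$$\inf_{p\ge1}\ \varphi(p)\Big(f^*(t)^p+\tfrac1t\int_t^\infty f^*(s)^p\,ds\Big)^{1/p}\ \lesssim\ f^{**}(t)+\int_t^\infty\Big(1+\log\tfrac st\Big)^{-1}\varphi\Big(1+\log\tfrac st\Big)f^*(s)\,\frac{ds}{s}.$$
Dividing by $f^*(t)\le f^{**}(t)$ and substituting $s=te^{u}$ (the tail integral becomes $f^*(t)\int_0^\infty(1+u)^{-1}\varphi(1+u)e^{-\sigma(u)}\,du$, with $\sigma(u):=\log\!\big(f^*(t)/f^*(te^{u})\big)\ge0$ nondecreasing, $\sigma(0)=0$, and $\tfrac1t\int_t^\infty f^*(s)^p\,ds=f^*(t)^pJ(p)$ where $J(p):=\int_0^\infty e^{\,u-p\sigma(u)}\,du$), this reduces to $\inf_{p\ge1}\varphi(p)\big(1+J(p)\big)^{1/p}\lesssim 1+\int_0^\infty(1+u)^{-1}\varphi(1+u)e^{-\sigma(u)}\,du$. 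Since $J$ is nonincreasing in $p$, one chooses $p$ adapted to $\sigma$: if $f^*$ has a long flat stretch at level $f^*(t)$ of logarithmic length $u_0$ then $J(p)\gtrsim e^{u_0}$ for all $p$ and the choice $p\sim u_0$ balances $\varphi(p)e^{u_0/p}$ against $\int_0^{u_0}(1+u)^{-1}\varphi(1+u)\,du$; otherwise the least $p$ with $J(p)\lesssim1$ has size comparable to the range where $\sigma(u)\sim u/p$, and on that range $e^{-\sigma(u)}(1+u)^{-1}\varphi(1+u)$ integrates to $\gtrsim\varphi(p)$. In both cases the admissibility (doubling) of $\varphi$ closes the estimate.

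\emph{Main obstacle.} Step 3 is the entire content. Every naive shortcut loses a full power of $1+\log\tfrac st$: a single fixed $p$ is powerless against a "fat'' tail; replacing $\big(f^*(t)^p+\tfrac1t\int_t^\infty f^*{}^p\big)^{1/p}$ by $f^*(t)+\big(\tfrac1t\int_t^\infty f^*{}^p\big)^{1/p}$ (subadditivity of $x\mapsto x^{1/p}$) already ruins the bound; and decomposing $f$ into dyadic level pieces and feeding each into the $L^p$ estimate at the exponent $p\sim\log(s/t)$ yields only $(Tf)^*(t)\lesssim f^{**}(t)+\sum_k\varphi(1+k)f^*(2^kt)$ — or, with $\|\cdot\|_{L^{p,1}}$ used directly, the still worse $\sum_k(1+k)\varphi(1+k)f^*(2^kt)$ — whereas \eqref{dec} demands $\sum_k(1+k)^{-1}\varphi(1+k)f^*(2^kt)$. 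The missing factor $(1+\log\tfrac st)^{-1}$ cannot be recovered scale by scale; it emerges only when one keeps the $L^p$--average intact and picks the exponent as a functional of the whole distribution function of $f$, which is precisely the bookkeeping carried out in the proof of Theorem~\ref{Teo-1}. The remaining ingredients — the interpolation of Step 1, the truncation of Step 2, and the final comparison of elementary integrals — are routine.
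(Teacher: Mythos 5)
Your Steps 1 and 2 are fine, but the pointwise inequality you isolate as Step 3 is \emph{false}, and with it the whole strategy collapses. Take $t=1$, $\varphi(x)=x^{1/2}$ (admissible), and the decreasing function $f^*=\chi_{(0,e)}+e^{-M}\chi_{[e,\,e^{e^M})}$ with $M$ large. After the substitution $s=e^u$, the right-hand side of your Step 3 inequality is $1+\int_0^1(1+u)^{-1/2}\,du+e^{-M}\int_1^{e^M}(1+u)^{-1/2}\,du=O(1)$. On the left, $f^*(1)^p+\int_1^\infty f^*(s)^p\,ds\ge e+e^{e^M-pM}$; if $p\ge (e^M-1)/M$ then already $\varphi(p)\gtrsim e^{M/2}M^{-1/2}$, while if $1\le p<(e^M-1)/M$ then $\big(e^{e^M-pM}\big)^{1/p}=e^{e^M/p-M}$ and, since $p\mapsto \tfrac12\log p+e^M/p$ is decreasing on this range, $p^{1/2}e^{e^M/p-M}\ge c\,e^{M/2}M^{-1/2}$ there as well. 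Hence $\inf_{p\ge 1}\varphi(p)\big(f^*(1)^p+\int_1^\infty f^{*p}\big)^{1/p}\gtrsim e^{M/2}M^{-1/2}\to\infty$ while the target is $O(1)$. No single exponent $p=p(t,f)$ can work: the two layers of this $f$ want $p\approx 1$ and $p\approx e^M/M$ respectively, and the $L^p$-norms of $\min\{f^*(t),f\}$ simply do not control $S_{1,\infty,\varphi}(f^*)(t)$.

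This also means your diagnosis of where the difficulty lies is backwards. The paper never keeps an $L^p$-average of the whole function intact; it does precisely the thing you dismiss, namely a layer decomposition --- but a \emph{vertical} one, $f=\sum_j a_j\chi_{F_j}$ with $F_1\subseteq\cdots\subseteq F_n$, not your horizontal dyadic-in-$s$ one. The hypothesis applied to $f=\chi_E$ is exactly \eqref{pp1}; the optimization over $\tau=1/p$ is carried out for each characteristic function separately (Lemmas \ref{L infimo log log} and \ref{comparables}), giving $(T\chi_{F_j})^*(r)\lesssim \varphi\big(1+\log(|F_j|/r)\big)$ for $r<|F_j|$; and the sum over $j$ is controlled through the subadditivity of $g\mapsto g^{**}$ and the linearity of $S_{1,\infty,\varphi}$ on sums of indicators, as in \eqref{Tf**estimationTeo2}. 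The factor $(1+\log(s/t))^{-1}$ that you could not recover scale by scale appears automatically when $\sum_j a_j\varphi(1+\log(|F_j|/t))$ is rewritten as an integral against $f^*$, because $\varphi'/\varphi\approx x^{-1}$ by admissibility \eqref{la espada y la pared}. The final passage from $(Tf)^{**}\lesssim S_{1,\infty,\varphi}(f^{**})$ to the starred estimate is the $g,h$ splitting already written out in the proof of Theorem \ref{Teo-1}; the paper's proof of the corollary consists of observing that the hypothesis is \eqref{pp1} and quoting that argument. As written, your proposal has an unfixable gap at its central step.
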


As a consequence, we have the following result: 

\begin{theorem}  \label{fip2} If 
$$
T:L^1 \longrightarrow L^{1, \infty}
$$
and, there exists $p_0>1$ so that 
$$
T:L^{p_0, 1}(u) \longrightarrow L^{p_0,\infty}(u),
$$
with constant $C \varphi(||u||_{A_{p_0}})$, for some admissible function $\varphi$, then \eqref{dec} holds. 
\end{theorem}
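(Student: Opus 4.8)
The plan is to reduce Theorem \ref{fip2} to Corollary \ref{fip}, so that the real work is to produce, from the two hypotheses, a single-parameter estimate of the form $\|Tf\|_{L^{p,\infty}} \le C\varphi(p)\|f\|_{L^{p,1}}$ for all $1\le p<\infty$, with constant independent of $p$. First I would record the trivial endpoint: the hypothesis $T:L^1\to L^{1,\infty}$ already gives the desired inequality at $p=1$ (with $L^{1,1}=L^1$), up to a harmless adjustment of the admissible function near $1$. So attention focuses on $p\ge p_0>1$, and on the intermediate range $1<p<p_0$.

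The key step is an extrapolation/interpolation argument. For $p\in(1,p_0)$, I would interpolate between the strong type at $p=1$ and the restricted weak type at $p=p_0$; since $p_0$ is a fixed number, the dependence of the resulting constant on $p$ stays bounded on the compact interval $[1,p_0]$, and one can absorb it into $\varphi$ (recalling that an admissible $\varphi$ is increasing and bounded below, so $C\le C'\varphi(p)$ there). For $p>p_0$, the natural tool is Rubio de Francia extrapolation starting from the weighted estimate $T:L^{p_0,1}(u)\to L^{p_0,\infty}(u)$ with constant $C\varphi(\|u\|_{A_{p_0}})$: the standard algorithm upgrades an $A_{p_0}$-weighted (restricted weak type) bound to an $A_p$-weighted bound for every $p>p_0$, and, when one tracks constants carefully, the $A_p$-characteristic appears raised to a power comparable to $\tfrac{p_0-1}{p-1}\le 1$, so that the weighted constant for $T:L^{p,1}(u)\to L^{p,\infty}(u)$ is at most $C\,\psi(\|u\|_{A_p})$ for some admissible $\psi$ controlled by $\varphi$. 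Specializing to $u\equiv 1$ (whose $A_p$-characteristic is $1$) then yields $\|Tf\|_{L^{p,\infty}}\le C\psi(1)\cdot(\text{interpolation factor})\,\|f\|_{L^{p,1}}$; the only place $p$ can enter unboundedly is through the extrapolation bookkeeping, and the admissibility hypothesis on $\varphi$ is precisely what guarantees the growth is at most $\varphi(p)$-controlled. Once this uniform-in-$p$ estimate $\|Tf\|_{L^{p,\infty}}\le C\varphi(p)\|f\|_{L^{p,1}}$ is in hand, Corollary \ref{fip} applies verbatim and gives \eqref{dec}.

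I expect the main obstacle to be the careful tracking of the $p$-dependence of constants through the Rubio de Francia extrapolation machinery: the off-the-shelf statements of extrapolation give an estimate with an implicit constant, whereas here one needs an explicit admissible bound in $p$, uniformly, including the behaviour of the $A_p$-characteristic exponent as $p\downarrow p_0$ and as $p\to\infty$. A secondary technical point is checking that the composition and the relevant operations (raising to the power $\tfrac{p_0-1}{p-1}$, adding a bounded term, pre- and post-composing with the interpolation constants on $[1,p_0]$) preserve admissibility in the sense of Definition \ref{admissible}; this should follow from the stability properties of the admissible class but must be verified. Everything else — the $p=1$ reduction and the final invocation of Corollary \ref{fip} — is routine.
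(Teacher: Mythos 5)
Your proposal follows essentially the same route as the paper: its proof consists precisely of invoking Rubio de Francia extrapolation (citing Duoandikoetxea's sharp version) to obtain $\|Tf\|_{L^{p,\infty}}\lesssim \varphi(p)\,\|f\|_{L^{p,1}}$ for every $p\ge 1$, and then applying Corollary \ref{fip}. The additional care you devote to the range $1<p<p_0$ and to tracking the $p$-dependence of the extrapolation constants is exactly the bookkeeping the paper leaves implicit.
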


Finally, our technique can be  also applied to operators for which condition \eqref{bound}
is changed by a weaker one, as the following result shows.

\begin{theorem}\label{Teo-main2-bis} 
Let $T$ be a sublinear operator such that, for some $0<\alpha\le 1$, some $p_0\ge 1$ and every $u\in A_1$,
$$
T:L^{p_0, 1}(u^\alpha)\longrightarrow  L^{p_0, \infty}(u^\alpha), 
$$
with constant less than or equal to $\varphi(||u||_{A_1})$, where $\varphi$ is an admissible function. Then, for every  $t>0$ and every measurable  function $f$, 
$$
(Tf)^*(t)\leq C \left(\frac 1{t^{\frac 1{p_0}}}\int_0^tf(s)\,\frac{ds}{s^{1 - \frac 1{p_0}}}+ \frac1{t^{\frac {1-\alpha}{p_0}}}\int_t^\infty \tilde\fii\left(1+\log \frac st\right) f(s)\,\frac{ds}{s^{1 - \frac {1-\alpha}{p_0}}}\right),
$$

with $$\tilde{\varphi}(x) = \left\{\begin{array}{ll} \varphi(x^{\frac{p_0}\alpha}), & 0 < \alpha < 1, \vspace{3mm} \\ x^{-1}\varphi(x), & \alpha = 1. \end{array}\right.$$
\end{theorem}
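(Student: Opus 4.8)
The plan is to rerun, keeping the exponents $p_0$ and $\alpha$ in sight, the two‑step scheme behind Theorems \ref{main1}, \ref{Teo-1} and \ref{Teo-main1-bis}. In a first step one transfers the $A_1$‑weighted weak‑type hypothesis on $T$ — now at the level $T:L^{p_0,1}(u^\alpha)\to L^{p_0,\infty}(u^\alpha)$ with constant $\varphi(\|u\|_{A_1})$ — into a weak‑type estimate on weighted Lorentz spaces: for $\w$ in a suitable subclass of $B_1^{\mathcal R}\cap B_\infty^*$, $T:\Lambda^{p_0,1}(\w)\to\Lambda^{p_0,\infty}(\w)$ should be bounded with norm dominated by $[\w]_{B_1^{\mathcal R}}$ times $\varphi$ evaluated at an appropriate power of $[\w]_{B_\infty^*}$, the substitution $u\mapsto u^\alpha$ being responsible for the appearance of the exponents $p_0$ and $\alpha$. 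In a second step one tests this inequality against the one‑parameter family of weights $\w_t$ used for Theorem \ref{Teo-1} — morally $\chi_{[0,t]}$ together with a slowly growing logarithmic tail on $[t,\infty)$ — exactly as in the motivating Proposition, and reads off the pointwise bound; the two summands of the conclusion, their homogeneities $s^{1-1/p_0}$ and $s^{1-(1-\alpha)/p_0}$, and the factor $\tilde\varphi(1+\log(s/t))$ all crystallize here.

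For the first step I would freeze a level $\lambda$ of $Tf$, localize it on $\{|Tf|>\lambda\}$, and — following the Rubio de Francia / iterated‑maximal‑function recipe as in \cite{bc:bc} — build an $A_1$ weight $u$ which, in the rearranged variable, is essentially constant on a set of measure $\sim t$ and decays like $(t/s)^{\gamma}$ off of it, with $\gamma<1$ and $\|u\|_{A_1}\sim(1-\gamma)^{-1}$; feeding $u^\alpha$ (which, when $\alpha<1$, is in $A_1$ with $\|u^\alpha\|_{A_1}\le\|u\|_{A_1}^\alpha$) into the hypothesis and summing the contributions over dyadic levels $\lambda$ produces the weighted Lorentz estimate. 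The crucial point is that the cost of the decay is charged as $\varphi(\|u\|_{A_1})$ while the relevant Lorentz quasinorms are measured against $u^\alpha$: rewriting everything in terms of the effective decay rate of $u^\alpha$ — equivalently, of the coefficient the tail of the test weight $\w_t$ must carry so that $[\w_t]_{B_1^{\mathcal R}}$ and $[\w_t]_{B_\infty^*}$ stay finite — is what forces $\varphi$ to be composed with $x\mapsto x^{p_0/\alpha}$ when $0<\alpha<1$, and, at the endpoint $\alpha=1$ where $u^\alpha=u$ and no such room is available, forces instead the division by $x$, reproducing the $\alpha=1$ factor $(1+\log(s/t))^{-1}\varphi(1+\log(s/t))$ of Theorem \ref{Teo-main1-bis}. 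The two mutually non‑matching expressions for $\tilde\varphi$ in the statement are just the records of this calculation in the two regimes.

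With the first step available, the second is the argument of the Proposition and of Theorem \ref{Teo-1}: evaluating on $\w_t$, whose $B_1^{\mathcal R}$‑norm is an absolute constant, produces the first summand $\frac{1}{t^{1/p_0}}\int_0^t f^*(s)\,\frac{ds}{s^{1-1/p_0}}$ — the normalization $t^{1/p_0}$ coming from $\|Tf\|_{L^{p_0,\infty}}$ restricted to a set of measure $\sim t$ and the homogeneity $s^{1-1/p_0}$ from unraveling the $L^{p_0,1}$‑quasinorm there — while the local $B_\infty^*$‑behaviour of $\w_t$ at a scale $s>t$, which grows like $1+\log(s/t)$, is what, through $\varphi$, yields the tail summand with the factor $\tilde\varphi(1+\log(s/t))$ and the homogeneity $s^{1-(1-\alpha)/p_0}$. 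The main obstacle is precisely the first step and the sharp control of the $A_1$‑norms of these auxiliary weights: one must exhibit, at each scale $s/t$, an $A_1$ weight of prescribed local size and decay whose $A_1$‑norm is no larger than the quantity that, composed with $\varphi$, equals $\tilde\varphi(1+\log(s/t))$, and verify that the resulting trade‑off optimization is genuinely sharp in both the power‑decay regime $0<\alpha<1$ and the logarithmically corrected endpoint $\alpha=1$ — the latter having to match the $\alpha=1$ case already contained in Theorem \ref{Teo-main1-bis}. Once these weight estimates and the level‑set summation are pinned down, the rest — substituting the test weights $\w_t$ and bookkeeping the constants — is routine and parallels the proofs already given.
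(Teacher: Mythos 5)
Your two-step architecture is exactly the paper's: the proof given there is literally ``apply Theorem \ref{main2} and Theorem \ref{Teo-2}'', i.e.\ (i) transfer the $A_1$-weighted hypothesis into a weak-type bound $T:\Lambda^1(\w)\to\Lambda^{1,\infty}(\w)$ for $\w$ in an appropriate pair of weight classes, with norm $C_1[\w]_{B_{1/p_0}^{\mathcal R}}^{1/p_0}\,\overline\varphi(C_2[\w]_{B^*_{p_0/(1-\alpha)}})$, and then (ii) run the general rearrangement-extraction machinery (power weights $t^{\tau-1}$, optimization in $\tau$ via Lemma \ref{L infimo log log}, extension to simple functions, and the $g+h$ splitting) to read off $S_{q_1,q_2,\varphi}$ with $q_1=p_0$, $q_2=\frac{p_0}{1-\alpha}$. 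So the route is the same; step (i) is moreover an external ingredient in the paper (Theorem \ref{main2}, quoted from \cite{bc:bc2}), so your only sketching it is on a par with what the paper does.

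Two pieces of bookkeeping in your write-up are off, though neither is fatal since you state the correct final formula. First, the relevant weight classes are not ``a subclass of $B_1^{\mathcal R}\cap B_\infty^*$'' in the sense of the $\varphi$ being evaluated at $[\w]_{B^*_\infty}$: for $0<\alpha<1$ the second index is the \emph{finite} one $q_2=\frac{p_0}{1-\alpha}$, and it is the finiteness of $q_2$ (via $B^*_{q_2}$ and the power kernel in $Q_{q_2,\varphi}$) that produces the tail homogeneity $s^{1-(1-\alpha)/p_0}$; testing against $B^*_\infty$ would give the wrong tail. Second, the factor $x^{-1}$ in $\tilde\varphi$ at $\alpha=1$ does \emph{not} come from the weight-transfer step --- there $\overline\varphi(x)=\varphi(x)$ --- but from the definition of $Q_{\infty,\varphi}$ in the extraction step, which carries the extra $(1+\log\frac st)^{-1}$ when $q_2=\infty$. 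If one follows your attribution literally (an extra $x^{-1}$ already in step (i), then another from step (ii)) one lands on $(1+\log\frac st)^{-2}\varphi(1+\log\frac st)$, which is not the claimed bound; so this is worth straightening out even though your stated conclusion is the right one.
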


The paper is organized as follows. In Section \ref{technic},  we present previous results, the necessary definitions and some technical lemmas which shall be used later on. Section  \ref{main}  contains the proofs of our  results and Section \ref{operators_bound} will be devoted to obtain pointwise estimates for the decreasing rearrangement of $Tf$ with respect to the Lebesgue measure for $T$  being operators satisfying a Fefferman-Stein inequality, the Bochner-Riesz operator, a rough operator,  a sparse operator or a Fourier multiplier. 
As usual, we write $A \lesssim B$ if there exists a positive constant $C>0$, independent of $A$ and $B$, such that $A\leq C B$. If $A\lesssim B$ and $B\lesssim A$, then we write $A\approx B$. 

\section{Definitions, previous results and lemmas}\label{technic}\label{lemmas}

\subsection{Admissible functions}

\begin{definition} \label{admissible} A function $\varphi:[1, \infty]\to [1, \infty]$ is called admissible if satisfies 
the following conditions:
\begin{enumerate}
\item[a)] $\fii(1) = 1$  and it is log-concave, that is 
$$
\theta\log \varphi(x)+(1-\theta) \log \varphi(y) \le \log \varphi(\theta x+(1-\theta)y), \qquad \forall x, y \geq 1, \ 0\le \theta\le 1, 
$$
\item[b)] and there exist $\gamma,\beta>0$ such that for every $x \geq 1$,
\begin{equation}\label{la espada y la pared}
\frac\gamma x\leq \frac{\fii'(x)}{\fii(x)}\leq \frac\beta x.
\end{equation}
\end{enumerate}
\end{definition}

\noindent Observe that \eqref{la espada y la pared}  implies that $\fii$ is increasing, as well as that 
$$
x^\gamma\leq \fii(x)\leq x^\beta. 
$$

\noindent Besides, since for every $C > 0$,
\begin{equation*}
\begin{split}
 \log\fii(C x) &=\int_1^x(\log \fii)'(s)\,ds+\int_x^{Cx}(\log \fii)'(s)ds \leq \log \fii(x) + \beta \log_+ C,   
\end{split}
\end{equation*}

\noindent it also holds that \begin{equation}\label{por la logconcavidad2}
    \varphi(Cx) \leq \max\{1, C^\beta\} \varphi(x).
\end{equation}

From now on, the function $\varphi$ will be an admissible function. 
\begin{examples}

\noindent
i)  Given $\gamma > 0$ and $\beta\geq 0$, the function $\fii(x)=x^\gamma(1+\log x)^\beta$ is admissible. 

\noindent
ii)  Given $n\in\N$, define
$$
\log_{(n)}x=\begin{cases}
1+\log x, &\mbox{if $n=1$},\\
1+\log\big(\log_{(n-1)} x\big), &\mbox{if $n>1$}.
\end{cases}
$$
Using this notation, if $\gamma>0$ and $\beta_1,\ldots,\beta_n\geq 0$, the function 
$$
\fii(x)=x^\gamma \prod_{k=1}^n \big(\log_{(k)} x\big)^{\beta_k}
$$
is also admissible.
\end{examples}

The next lemmas are simple computations for admissible functions which shall be fundamental in the proof of our main results. 

\begin{lemma}\label{comparables}
Let $0 < q \leq \infty$. If $r\geq 1$ then 
\begin{equation*}
\left\{ \begin{array}{lc}
    \displaystyle \int_1^r \fii\big(1+\log s\big)  \, \frac{ds}{s^{1 - \frac 1q}} \approx \fii\left(1+\log r\right)r^{\frac 1q} - 1, & q < \infty, \vspace{3mm} \\ \displaystyle
    \int_1^r \big(1+\log s\big)^{-1} \,\fii\big(1+\log s\big)  \, \frac{ds}s \approx \fii\left(1+\log r\right)-1,\, & q = \infty.
\end{array} \right. 
\end{equation*}
\end{lemma}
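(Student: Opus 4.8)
The plan is to estimate both integrals by a change of variables to logarithmic scale, reducing everything to the elementary fact that for an admissible $\varphi$ the function $\psi(u) := \varphi(1+u)$ satisfies the differential inequality \eqref{la espada y la pared}, i.e. $\gamma/(1+u) \le \psi'(u)/\psi(u) \le \beta/(1+u)$, so that near $u=0$ the weight $\psi$ behaves like a constant ($\psi(0)=\varphi(1)=1$) and at infinity grows polynomially between $u^\gamma$ and $u^\beta$. I would handle the two cases separately.

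For $q<\infty$: substitute $s = e^u$, $ds/s = du$, so the integral becomes $\int_0^{\log r} \varphi(1+u)\, e^{u/q}\, du$. The claim is that this is comparable to $\varphi(1+\log r)\, r^{1/q} - 1$. For the upper bound, I would integrate by parts, or more directly compare the integrand to the derivative of $F(u) := \varphi(1+u)e^{u/q}$: by \eqref{la espada y la pared},
$$
F'(u) = \varphi(1+u)e^{u/q}\left( \frac{\varphi'(1+u)}{\varphi(1+u)} + \frac1q \right) \approx \varphi(1+u)e^{u/q}
$$
since the bracket lies between $\tfrac1q$ and $\tfrac\beta{1+u}+\tfrac1q$, hence is bounded above and below by absolute positive constants times $\max\{1,\,1/q\text{-ish}\}$ — more carefully, one shows $\tfrac1q \le \tfrac{\varphi'}{\varphi}+\tfrac1q \le \beta + \tfrac1q$, giving two-sided comparison of $F'$ with $F$ with constants depending only on $\gamma,\beta,q$ (and for the stated $\approx$ one must check the implied constants can be taken uniform — but the lemma fixes $q$, so this is fine). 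Then $\int_0^{\log r} F'(u)\,du = F(\log r) - F(0) = \varphi(1+\log r)r^{1/q} - 1$, and the comparison of $\int F \approx \int F'$ closes the case. One should note $F'\ge 0$ and $F\ge 1$ throughout so there is no sign issue and the integrals are genuinely comparable.

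For $q=\infty$: the integral is $\int_0^{\log r} (1+u)^{-1}\varphi(1+u)\,du$. Now set $G(u) := \varphi(1+u)$; then $G'(u) = \varphi'(1+u) \ge \frac{\gamma}{1+u}\varphi(1+u)$ by the left inequality in \eqref{la espada y la pared}, and $G'(u) = \varphi'(1+u) \le \frac{\beta}{1+u}\varphi(1+u)$ by the right one. Hence $(1+u)^{-1}\varphi(1+u) \approx G'(u)$ with constants $\gamma,\beta$, and $\int_0^{\log r} G'(u)\,du = \varphi(1+\log r) - \varphi(1) = \varphi(1+\log r) - 1$, which is exactly the claim.

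The only real subtlety — and the step I would be most careful about — is the uniformity of the implied constants: in the $q<\infty$ case the bracket $\tfrac{\varphi'}{\varphi}+\tfrac1q$ is bounded below by $\tfrac1q$, so the lower comparison constant degenerates as $q\to\infty$, but since the lemma treats $q$ as fixed this is harmless, and in applications one uses it for a fixed exponent (e.g. $q=p_0$). I would just remark that all constants depend only on $\gamma,\beta$ and, when $q<\infty$, on $q$. Everything else is the routine "compare an increasing integrand to the derivative of the obvious antiderivative" argument, so no further grinding is needed.
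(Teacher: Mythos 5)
Your proof is correct, and it uses exactly the technique the authors have in mind: the paper leaves Lemma \ref{comparables} as a ``simple computation'' but proves the companion Lemma \ref{comparables 2} by the identical device of comparing the integrand with the derivative of the natural antiderivative via \eqref{la espada y la pared}. Your remarks on the sign of $F'$ and on the dependence of the implied constants on $\gamma$, $\beta$ and (for $q<\infty$) on $q$ are accurate and complete the argument.
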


\begin{lemma}\label{comparables 2}
Let $1 < q \leq \infty$. There exists some $\la = \la(\varphi, q) > 1$ such that for every $t>0$ and $r \geq \la t$,
$$
\left\{\begin{array}{lc}
    \displaystyle \int_r^{\infty} \fii\left(1+\log \frac st\right)  \frac{ds}{s^{2-\frac 1q}}\approx  \fii\left(1+\log \frac rt\right) \frac{1}{r^{1-\frac 1q}}, & q <\infty, \vspace{3mm} \\
    \displaystyle \int_r^{\infty} \left(1+\log \frac st\right) ^{-1} \fii\left(1+\log \frac st\right)  \frac{ds}{s^2}\approx  \left(1+\log \frac rt\right)^{-1} \,\fii\left(1+\log \frac rt\right) \frac 1r,\, & q = \infty.
\end{array}\right.
$$
\end{lemma}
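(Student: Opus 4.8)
\textbf{Proof plan for Lemma \ref{comparables 2}.}
The plan is to reduce both cases to an application of the fundamental theorem of calculus after a change of variables that turns the logarithmic weight into a controllable exponential factor. Set $u = 1 + \log(s/t)$, so that $s = t\,e^{u-1}$ and $ds/s = du$; then the left-hand side of the case $q < \infty$ becomes
\begin{equation*}
\int_{1+\log(r/t)}^{\infty} \varphi(u)\, \bigl(t\, e^{u-1}\bigr)^{\frac 1q - 1}\, du
 = t^{\frac 1q - 1} e^{1 - \frac 1q}\int_{a}^{\infty} \varphi(u)\, e^{-(1 - \frac 1q)u}\, du,
\end{equation*}
where $a = 1 + \log(r/t)$. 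Thus everything comes down to estimating $I(a) := \int_a^\infty \varphi(u)\, e^{-\delta u}\, du$ with $\delta = 1 - \frac 1q \in (0,1]$ (and $\delta = 1$ in the case $q = \infty$, with an extra factor $\varphi(u)/u$ in the integrand, i.e. one replaces $\varphi$ by the still-admissible-type quantity $\varphi(u)/u$). The target is $I(a) \approx \varphi(a)\, e^{-\delta a}/\delta$, which upon undoing the substitution gives exactly $\varphi(1+\log(r/t))\, r^{\frac 1q - 1}$ up to constants depending only on $\varphi$ and $q$.

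For the estimate of $I(a)$, the upper bound is the only place where admissibility genuinely enters, and it is where I expect the one real (though mild) obstacle. Writing $\varphi(u) = \varphi(a)\exp\bigl(\int_a^u (\log\varphi)'(v)\,dv\bigr)$ and using the right-hand inequality in \eqref{la espada y la pared}, namely $(\log\varphi)'(v) \le \beta/v \le \beta/a$ for $v \ge a$, we get $\varphi(u) \le \varphi(a)\, e^{(\beta/a)(u-a)}$ for $u \ge a$. Hence
\begin{equation*}
I(a) \le \varphi(a)\, e^{-\beta}\int_a^\infty e^{-(\delta - \frac\beta a) u}\, du,
\end{equation*}
and provided $a$ is large enough that $\delta - \beta/a \ge \delta/2$ — equivalently $a \ge 2\beta/\delta$, which is exactly where the threshold $\lambda = \lambda(\varphi,q) > 1$ in the statement comes from, since $a = 1 + \log(r/t) \ge 1 + \log\lambda$ forces $r \ge \lambda t$ — the integral is bounded by $\tfrac{2}{\delta} e^{-(\delta - \beta/a) a} \le \tfrac2\delta e^{-\delta a/2} \cdot e^{\beta}$. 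That would only give $e^{-\delta a/2}$, losing half the exponent, so instead I split $\int_a^\infty = \int_a^{2a} + \int_{2a}^\infty$: on $[a,2a]$ bound $\varphi(u) \le \varphi(2a) \lesssim \varphi(a)$ by \eqref{por la logconcavidad2} and integrate $e^{-\delta u}$ directly to get $\lesssim \varphi(a) e^{-\delta a}/\delta$; on $[2a,\infty)$ the polynomial bound $\varphi(u) \le u^\beta$ together with $e^{-\delta u} \le e^{-\delta u/2} e^{-\delta a}$ makes the tail integrable and exponentially smaller than the main term. This yields $I(a) \lesssim \varphi(a) e^{-\delta a}/\delta$.

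The lower bound is easy and requires no smallness of $a$: restricting the integral to $[a, a+1]$ and using that $\varphi$ is increasing gives $I(a) \ge \varphi(a)\int_a^{a+1} e^{-\delta u}\,du = \varphi(a) e^{-\delta a}\, \tfrac{1 - e^{-\delta}}{\delta} \gtrsim \varphi(a) e^{-\delta a}$, with the implicit constant depending only on $\delta$ (hence on $q$), since $1 - e^{-\delta} \gtrsim \delta$ for $\delta \in (0,1]$. Combining the two bounds gives $I(a) \approx \varphi(a) e^{-\delta a}$, and translating back through the substitution $s = t e^{u-1}$ produces the claimed equivalence in both the $q < \infty$ and $q = \infty$ cases, the latter being handled identically after replacing $\varphi(u)$ by $\varphi(u)/u$ throughout and noting that this modified integrand still satisfies the logarithmic-derivative bounds needed for the argument (its log-derivative is $(\log\varphi)'(u) - 1/u$, still $\le \beta/u$). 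I would present the computation once for general $\delta$ and simply remark that the $q=\infty$ case follows by the same reasoning.
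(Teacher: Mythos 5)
Your proof is correct, but it takes a genuinely different route from the paper's. The paper's argument is an antiderivative trick: it sets $g_q(r)$ equal to minus the claimed right-hand side, checks via \eqref{la espada y la pared} that $g_q'(r)$ is comparable to the integrand once $1+\log(r/t)$ exceeds a threshold depending only on $\beta$ and $q$ (this is where $\la$ comes from), and then integrates from $r$ to $\infty$ using $g_q(r)\to 0$. You instead substitute $u=1+\log(s/t)$ and prove two-sided bounds for $I(a)=\int_a^\infty\fii(u)e^{-\delta u}\,du$ directly. Both arguments hinge on exactly the same inequality $\fii'(x)\le(\beta/x)\fii(x)$; the paper's version is shorter, while yours makes the role of each hypothesis (and of the threshold $a\ge 2\beta/\delta$) more explicit. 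Two small remarks on your write-up, neither affecting correctness. First, the detour through the splitting $\int_a^{2a}+\int_{2a}^\infty$ rests on a miscalculation: evaluating your first attempt exactly gives $\int_a^\infty e^{-(\delta-\beta/a)u}\,du=(\delta-\beta/a)^{-1}e^{-(\delta-\beta/a)a}\le \tfrac2\delta\, e^{\beta}e^{-\delta a}$, i.e.\ the full exponent $\delta a$ is retained and only a harmless factor $e^{\beta}$ appears; nothing is lost and the splitting, though valid, is unnecessary. Second, in the case $q=\infty$ the lower bound is not quite ``identical'': $\fii(u)/u$ need not be increasing, so on $[a,a+1]$ you should bound $\fii(u)/u\ge \fii(a)/(a+1)\ge \fii(a)/(2a)$ rather than invoke monotonicity.
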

\begin{proof}
Consider the function 
$$
g_q(r)= \left\{\begin{array}{lc}
    \displaystyle -\fii\left(1+\log \frac rt\right)\  \frac1{r^{1-\frac 1q}} , & q < \infty, \vspace{3mm}\\
    \displaystyle - \left(1+\log \frac rt\right)^{-1}\fii\left(1+\log \frac rt\right)\  \frac1r,\, & q = \infty.
\end{array}\right.
$$
Then, straightforward computations show that there exists some $\la>1$ depending only on $\fii$ and on $q$ such that for every $r \geq \la t$, $$
g_q'(r)\approx \left\{\begin{array}{lc}
    \displaystyle \fii\left(1+\log \frac rt\right)\ \frac 1{r^{2-\frac 1q}}, & q < \infty, \vspace{3mm} \\
    \displaystyle \left(1+\log \frac rt\right)^{- 1} \fii\left(1+\log \frac rt\right)\ \frac 1{r^2},\, & q = \infty.
\end{array}\right.
$$
Thus, since $\lim_{r\rightarrow \infty} g_q(r) = 0$, the result follows.
\end{proof}

\begin{lemma}\label{L infimo log log}
Given $x\in\R$ and $0 < \mu \leq 1$. Then
$$
\inf_{y \in (0, \mu]}\fii(y^{-1})e^{y x}\lesssim 
\begin{cases}
e^{\mu x}, & \mbox{if $x\leq 0$},\\
\fii\left(\frac{1+x}\mu\right), & \mbox{if $x >0$}.
\end{cases}
$$
\end{lemma}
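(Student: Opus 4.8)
\textbf{Proof plan for Lemma \ref{L infimo log log}.}

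The plan is to estimate the infimum $\inf_{y\in(0,\mu]}\fii(y^{-1})e^{yx}$ by choosing a near-optimal value of $y$ in each of the two regimes, using the control on the logarithmic derivative of $\fii$ provided by Definition \ref{admissible}(b), namely $\gamma/x \le \fii'(x)/\fii(x) \le \beta/x$, together with the growth estimate $x^\gamma \le \fii(x) \le x^\beta$ and the scaling inequality \eqref{por la logconcavidad2}.

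First, in the case $x \le 0$, the factor $e^{yx}$ is decreasing in $y$ while $\fii(y^{-1})$ is increasing in $y$ (since $\fii$ is increasing and $y \mapsto y^{-1}$ is decreasing); there is a genuine trade-off, but the simplest bound is to take $y = \mu$, which gives $\fii(\mu^{-1})e^{\mu x}$. This is not yet of the claimed form $e^{\mu x}$, so one cannot just plug in $y=\mu$ naively; instead I would either absorb the constant $\fii(\mu^{-1})$ (note $\mu\le 1$ is fixed in applications, though here it varies) or — more carefully — optimize. Taking logarithms, we want to minimize $h(y) := \log\fii(y^{-1}) + yx$. Then $h'(y) = -y^{-2}\,\fii'(y^{-1})/\fii(y^{-1}) + x$, and using $\fii'(t)/\fii(t) \le \beta/t$ with $t = y^{-1}$ gives $-y^{-2}\cdot\beta y = -\beta/y$, so $h'(y) \le x - \gamma/y$ and $h'(y) \ge x - \beta/y$ hmm — since $x\le 0$ both terms are negative, so $h$ is decreasing and the infimum over $(0,\mu]$ is attained (in the limit) at $y=\mu$, giving $\fii(\mu^{-1})e^{\mu x}$; to reach the cleaner $e^{\mu x}$ one should read the statement with an implied dependence on $\mu$, or note that for the relevant range one compares against $\fii(1/\mu)\lesssim$ (absolute constant) via \eqref{por la logconcavidad2}. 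I would present the $x\le 0$ case by simply evaluating at $y=\mu$ and invoking \eqref{por la logconcavidad2} to pass from $\fii(\mu^{-1})$ to a harmless factor.

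Second, and this is the substantive case, suppose $x > 0$. Now $\fii(y^{-1})$ decreases as $y$ increases while $e^{yx}$ increases, so there is a real minimum in the interior. The natural guess is to balance the two contributions to $h(y)=\log\fii(y^{-1})+yx$: setting $h'(y)=0$ and using $\fii'(t)/\fii(t)\approx 1/t$ (up to the constants $\gamma,\beta$) at $t=y^{-1}$ yields $-\,(\text{const})/y + x = 0$, i.e. $y \approx 1/x$. Since we also need $y\le\mu$, the correct choice is $y = \min\{\mu, \, c/x\}$ for a suitable constant $c$ (e.g.\ $c=1$). If $c/x \le \mu$, i.e.\ $x\ge c/\mu$, then $y = c/x$ gives $\fii(x/c)e^{c} \approx \fii(x) \lesssim \fii((1+x)/\mu)$ after using \eqref{por la logconcavidad2} and monotonicity. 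If instead $x < c/\mu$, take $y=\mu$: then $\fii(\mu^{-1})e^{\mu x} \le \fii(\mu^{-1})e^{c}\approx \fii(\mu^{-1}) \lesssim \fii((1+x)/\mu)$, again since $1/\mu \le (1+x)/\mu$ and $\fii$ is increasing. In both subcases we land on $\fii((1+x)/\mu)$, as required.

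The main obstacle is purely bookkeeping: keeping the constants from \eqref{la espada y la pared} and \eqref{por la logconcavidad2} under control while matching the exact form $\fii((1+x)/\mu)$ on the right-hand side — in particular justifying that evaluating at the truncated optimizer $y=\min\{\mu,c/x\}$ (rather than the true minimizer) loses only a multiplicative constant. There is no analytic difficulty beyond the elementary convexity of $h$ and the two-sided bound on $\fii'/\fii$; the $\lesssim$ in the statement is exactly what lets us be cavalier about the optimization. I would organize the write-up as: (i) reduce to computing $\inf_y e^{h(y)}$; (ii) handle $x\le 0$ by $y=\mu$; (iii) for $x>0$ split on whether $c/x\le\mu$ and evaluate at $y=\min\{\mu,c/x\}$, invoking \eqref{por la logconcavidad2} at the end in each subcase.
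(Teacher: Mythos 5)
Your proposal is correct and matches the paper's (one-line) proof in substance: the paper simply evaluates at $y=\mu$ when $x\le 0$ and at $y=\mu/(1+x)$ when $x>0$, the latter being comparable to your truncated optimizer $\min\{\mu,c/x\}$, and the implied constant is indeed allowed to depend on $\mu$ (and $\fii$) but not on $x$, which settles your worry about the factor $\fii(\mu^{-1})$ in the first case. (Your parenthetical suggestion that $\fii(1/\mu)$ could be an absolute constant via \eqref{por la logconcavidad2} should be dropped, since $\fii(1/\mu)\ge \mu^{-\gamma}$; also note that for $x\le0$ both factors are decreasing in $y$, so there is no trade-off and $y=\mu$ is the true minimizer.)
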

\bdem
If $x \le 0$, the infimum is attained at $y = \mu$, and if $x>0$, we take $y = \mu/(1+x)$. 
\edem

\begin{lemma}\label{L sup log log}
For every $y\geq 1$, 
$$
\sup_{x \in [1,\infty)}\fii(x)e^{-x/y }\lesssim \, \fii(y).
$$
\end{lemma}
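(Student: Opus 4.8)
The goal is to bound $\sup_{x\ge 1}\fii(x)e^{-x/y}$ by a constant multiple of $\fii(y)$, uniformly in $y\ge 1$. The plan is to exploit the two-sided logarithmic derivative bound \eqref{la espada y la pared}, which says $\fii'(x)/\fii(x)\le \beta/x$, to control the growth of $\fii$ and show that the exponential factor $e^{-x/y}$ wins for $x$ large compared to $y$, while for $x$ of size comparable to (or smaller than) $y$ the term $\fii(x)$ is already controlled by $\fii(y)$ since $\fii$ is increasing.

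\textbf{Step 1: Split at $x=y$.} For $1\le x\le y$, monotonicity of $\fii$ (which follows from \eqref{la espada y la pared}) gives $\fii(x)\le \fii(y)$, and $e^{-x/y}\le 1$, so $\fii(x)e^{-x/y}\le \fii(y)$ on this range, which is harmless. The content is in the range $x>y$.

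\textbf{Step 2: Control the tail $x>y$ using the power bound.} From \eqref{la espada y la pared} we have $\fii(x)\le x^\beta$ for $x\ge 1$. Hence for $x>y\ge 1$, writing $x = y\cdot(x/y)$ and using \eqref{por la logconcavidad2} with $C=x/y\ge 1$, we get $\fii(x)=\fii\big(\tfrac{x}{y}\cdot y\big)\le (x/y)^\beta \fii(y)$. Therefore
$$
\fii(x)e^{-x/y}\le \fii(y)\,\Big(\frac{x}{y}\Big)^\beta e^{-x/y}=\fii(y)\, u^\beta e^{-u}, \qquad u:=x/y>1.
$$
Since $\sup_{u>0}u^\beta e^{-u}=(\beta/e)^\beta<\infty$ is a constant depending only on $\beta$ (hence only on $\fii$), we conclude $\fii(x)e^{-x/y}\lesssim \fii(y)$ on $x>y$ as well.

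\textbf{Step 3: Combine.} Taking the supremum over $x\ge 1$ of the two estimates yields $\sup_{x\ge 1}\fii(x)e^{-x/y}\le C_\fii\,\fii(y)$, which is the claim. The argument is essentially a one-line computation once \eqref{por la logconcavidad2} (equivalently, the upper bound in \eqref{la espada y la pared}) is invoked; there is no real obstacle beyond correctly extracting the factor $(x/y)^\beta$, and the only place care is needed is to make sure the constant is independent of $y$, which it is since it is $\max\{1,(\beta/e)^\beta\}$.
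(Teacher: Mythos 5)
Your proof is correct and is essentially identical to the paper's: both rest on the submultiplicative estimate \eqref{por la logconcavidad2} with $C=x/y$ and the elementary bound $\sup_{u>0}u^\beta e^{-u}=(\beta/e)^\beta$. The only cosmetic difference is that you split the range at $x=y$, whereas the paper absorbs both cases into a single $\max\{1,(x/y)^\beta e^{-x/y}\}$ factor.
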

\bdem
\noindent By means of \eqref{por la logconcavidad2}, 
$$\fii(x)e^{-x/y} \leq \max\left\{1, \left( \frac xy \right)^\beta e^{-x/y}\right\}\varphi(y) \leq \max\left\{1, \beta^\beta e^{-\beta}\right\}\varphi(y). 
$$
\edem

\medskip

\subsection{Calder\'on type operators}

\begin{definition}
\noindent Let $1 \leq q_1, q_2 \leq \infty$, and let  $\fii$ be an admissible function. Then, for every positive and  real valued  measurable function $f$,  we define 
\begin{equation}\label{P_Q_Definition}
\begin{split}
    P_{q_1}f(t)&:=  \frac 1{t^{\frac 1{q_1}}}\int_0^tf(s)\,\frac{ds}{s^{1 - \frac 1{q_1}}},\\
Q_{q_2,\fii}f(t)&:= \left\{\begin{array}{lc}
    \displaystyle \frac1{t^{\frac 1{q_2}}}\int_t^\infty \fii\left(1+\log \frac st\right) f(s)\,\frac{ds}{s^{1 - \frac 1{q_2}}}, & q_2 < \infty, \vspace{2mm} \\ \displaystyle \int_t^\infty \left( 1 + \log\frac st \right)^{- 1} \fii\left(1+\log \frac st\right) f(s)\,\frac{ds}s, 
     & q_2 = \infty,
\end{array} \right.
\end{split}
\end{equation}
and
$$
S_{q_1,q_2,\fii}f(t) := P_{q_1}f(t)+Q_{q_2,\fii}f(t).
$$
\end{definition}

\medskip

\noi In particular, if  $q_1 = 1$, $q_2=\infty$, and $\fii(x) = x$, we recover the Calder\'on operator  \cite{bs:bs}
$$
Sf(t) := Pf(t) + Qf(t),  
$$ 
where $P$ and $Q$ are respectively the Hardy operator and its conjugate
$$
Pf(t) = \frac 1t\int_0^tf(s)\,ds, \qquad Qf(t) =\int_t^\infty f(s)\,\frac{ds}s. 
$$
We observe that, in general,
\begin{equation}\label{stst}
S_{q_1, q_2, {\varphi}}f(t) =\int_0^1f(st)\,\frac{ds}{s^{1 - \frac 1{q_1}}} + \left\{\begin{array}{lc}
        \displaystyle \int_1^\infty {\varphi}\left( 1 + \log s \right) f(st)\,\frac{ds}{s^{1 - \frac {1}{q_2}}}, & q_2 < \infty, \vspace{3mm} \\ \displaystyle
        \int_1^\infty \left( 1 + \log s \right)^{-1}{\varphi}\left( 1 + \log s \right) f(st)\,\frac{ds}s,\, & q_2 = \infty.
    \end{array}\right. 
\end{equation}

\medskip

For every measurable function $f$, let $f^*$ be its decreasing rearrangement defined by 
$$
f ^*(t):=\inf\{s>0:\lambda_f(s)\leq t\}, \qquad \lambda_f(t):= |\{|f|>t\}|, \qquad t > 0,
$$
and $f^{**}$ the maximal function of $f$ defined by $f^{**}(t) = P(f^*)(t)$, $t > 0$. For further information about these notions and related topics we refer to \cite{bs:bs}.

\begin{lemma}\label{GeneralizedCalderonProperty} 
Let $1 \leq q_1, q_2 \le \infty$. For every measurable function $f$,
 $$
 S_{q_1,q_2,\fii}(f^*)^{**}(t) = S_{q_1,q_2,\fii}(f^{**})(t), \qquad t > 0.
 $$
\end{lemma}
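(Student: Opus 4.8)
The plan is to exhibit $S_{q_1,q_2,\fii}$ as an averaging operator that commutes with the Hardy operator $P$, and then to observe that on a non-increasing input the operation $(\cdot)^{**}$ is just $P$ itself. Starting from the dilation form \eqref{stst}, one writes
$$
S_{q_1,q_2,\fii}h(t)=\int_0^\infty \kappa(s)\,h(st)\,ds,
$$
where $\kappa$ is the kernel equal to $s^{\frac1{q_1}-1}$ on $(0,1)$ and, on $[1,\infty)$, equal to $\fii(1+\log s)\,s^{\frac1{q_2}-1}$ when $q_2<\infty$ and to $(1+\log s)^{-1}\fii(1+\log s)\,s^{-1}$ when $q_2=\infty$ (the value at $s=1$ is immaterial, though in fact the two pieces match there since $\fii(1)=1$). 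Since $\fii\geq 1$, the kernel $\kappa$ is non-negative, and both branches $q_2<\infty$ and $q_2=\infty$ are subsumed in this single representation.

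The first key step is the commutation identity: for any non-negative measurable kernel $\kappa$ on $(0,\infty)$ and any non-negative measurable $h$, the operator $T h(t):=\int_0^\infty\kappa(s)h(st)\,ds$ satisfies $P(Th)=T(Ph)$. Indeed, by Tonelli,
$$
P(Th)(t)=\frac1t\int_0^t\!\!\int_0^\infty\kappa(s)h(sr)\,ds\,dr=\int_0^\infty\kappa(s)\Big(\frac1t\int_0^t h(sr)\,dr\Big)\,ds,
$$
and the change of variables $\rho=sr$ turns the inner average into $\frac1{st}\int_0^{st}h(\rho)\,d\rho=Ph(st)$, whence $P(Th)(t)=\int_0^\infty\kappa(s)Ph(st)\,ds=T(Ph)(t)$. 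Applying this with $T=S_{q_1,q_2,\fii}$ and $h=f^*$, and recalling that $f^{**}=P(f^*)$, gives
$$
P\big(S_{q_1,q_2,\fii}(f^*)\big)(t)=S_{q_1,q_2,\fii}\big(P(f^*)\big)(t)=S_{q_1,q_2,\fii}(f^{**})(t).
$$

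It remains to identify the left-hand side with $\big(S_{q_1,q_2,\fii}(f^*)\big)^{**}$. Since $f^*$ is non-increasing and $\kappa\geq 0$, the function $t\mapsto S_{q_1,q_2,\fii}(f^*)(t)=\int_0^\infty\kappa(s)f^*(st)\,ds$ is non-negative and non-increasing; hence it coincides a.e.\ with its own decreasing rearrangement, so that $\big(S_{q_1,q_2,\fii}(f^*)\big)^{**}(t)=\frac1t\int_0^t S_{q_1,q_2,\fii}(f^*)(s)\,ds=P\big(S_{q_1,q_2,\fii}(f^*)\big)(t)$ (see \cite{bs:bs}). Combining with the previous display yields the claim. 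There is no serious obstacle here: the only points requiring a word of care are the Tonelli interchange --- legitimate because all integrands are non-negative, with the convention that both sides of the asserted identity may take the value $+\infty$ --- and the elementary fact that a non-negative non-increasing function equals its decreasing rearrangement a.e., which is what reduces $(\cdot)^{**}$ to $P$ on the relevant input.
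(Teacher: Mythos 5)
Your proof is correct and follows essentially the same route as the paper's: the paper likewise observes from \eqref{stst} that $S_{q_1,q_2,\fii}(f^*)$ is decreasing, so that its maximal function is $P\big(S_{q_1,q_2,\fii}(f^*)\big)$, and then invokes Fubini--Tonelli, which is exactly the commutation identity $P\circ S_{q_1,q_2,\fii}=S_{q_1,q_2,\fii}\circ P$ that you spell out in detail. Your write-up simply makes explicit the steps the paper leaves as ``clearly'' and ``follows immediately''.
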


\begin{proof}  By \eqref{stst}, clearly,  $S_{q_1,q_2,\fii}(f^*)$ is a decreasing function. Then,  it holds that    
$$
S_{q_1,q_2,\fii}(f^*)^{**}(t)= P \big( S_{q_1,q_2,\fii}(f^*) )(t), \qquad t > 0, 
$$  
and the result follows immediately by the Fubini's theorem. 
\end{proof}

\medskip

\subsection{Lorentz spaces and some classes of weights}

Let $0 < p < \infty$, and $0 < q \leq \infty$. Let $\w$ be a positive locally integrable function defined on $(0, \infty)$, and define $W(t)=\int_0^t \w(r)\,dr$, $t > 0$. The weighted Lorentz space  $\Lambda^{p,q}(\w)$ is defined by the condition $||f||_{\Lambda^{p,q}(\w)} < \infty$ where
$$
||f||_{\Lambda^{p,q}(\w)}= \left\{ \begin{array}{cc}
     \displaystyle \bigg( \int_0^\infty f^*(s)^q W(s)^{\frac qp -1}\w(s) ds \bigg)^{\frac 1q}, & q < \infty, \\ \sup_{t>0} f^*(t) W(t)^{\frac 1p },
     & q = \infty.
\end{array} \right. 
$$
 
For further information about these notions and related topics we refer to \cite{bs:bs, crs:crs, l:l1}.  \medskip

\begin{definition}  The following classes of weights have appeared in the literature concerning the boundedness of Hardy and conjugate Hardy type operators on the class of monotone decreasing functions on $L^{p,q}(\w)$, denoted by $L^{p,q}_{\text{dec}}(\w)$, $0 < p < \infty$, $0 < q \leq \infty$.

\medskip

\noindent
a) $B_p^\mathcal R$ class: Concerning the Hardy operator $P$, we have \cite{cgs:cgs, cs:cs} that for $0 < p \leq 1$,
$$
P:L^p_{\text{dec}}(\w) \longrightarrow  L^{p, \infty}(\w) \quad \iff \quad \w\in B_p^\mathcal R, 
$$
where $\w\in B_p^\mathcal R$ is defined by 
\begin{equation*} 
[\w]_{B_p^\mathcal R}=\sup_{0 < r \le t <\infty} \dfrac{rW(t)^{\frac 1p}}{tW(r)^{\frac 1p}}<\infty.
\end{equation*}
In this paper, we extend this definition for the whole range $0 < p < \infty$. In fact, $B_p^R$ can be considered to be the `restricted' class of the well known $B_p$ class \cite{am:am, n:n} and it is easy to see that all weight in $B_p$ is $p$ quasiconcave, that is $B_p \subset B_p^{\mathcal R}$.

\noindent 
b) $B_q^*$ class: Concerning the generalized conjugate Hardy-type operator \cite{l:l3, n:n} (see \eqref{P_Q_Definition}), $$Q_{q_2}(t) := Q_{q_2,1}f(t) = \frac 1{t^\frac 1{q_2}}\int_t^\infty f(s)\,\frac{ds}{s^{1 - \frac 1{q_2}}}, \qquad t > 0,$$ on $L^p_{\text{dec}}(\w)$, it holds that for $0 < q_2 < \infty$, $$Q_{q_2}:L^p_{\text{dec}}(\w) \longrightarrow L^p(\w) \quad \iff \quad w \in B_{\frac{q_2}p}^*,$$ where, for $0 < q < \infty$, $\w\in B_q^*$ if
\begin{align*}
[\w]_{B_q^{*}} := \sup_{t > 0}\frac{1}{W(t)}\int_0^t \left( \frac ts\right)^{\frac 1q}\w(s)\,ds < \infty.
\end{align*}
\noindent In particular, if $\w\in B^*_q$, we have that,  for every $0 < r < t$, 
$$
W(r)\parenthesis{\frac tr}^{\frac 1q}\le \int_0^t \parenthesis{\frac ts}^{\frac 1q}\w(s) ds\leq [\w]_{B^*_q} W(t), 
$$
and therefore, 
\begin{equation}\label{eq 1 de Bp}
\frac{W(r)}{W(t)} \le [\w]_{B^*_q} \parenthesis{\frac rt}^{\frac 1q}. 
\end{equation}

\noindent
c) $B_\infty^*$ class: Concerning the adjoint of the Hardy operator $Q$, we have \cite{a:a} that for every $p > 0$,  $$
Q:L^{p}_{\text{dec}}(\w) \longrightarrow  L^{p}(\w) \iff \w\in B_\infty^*, 
$$ where $\w\in B_\infty^*$ is defined  by 
\begin{align*}
\bs{\w}= \sup_{t > 0} \frac 1{W(t)}\int_0^t\frac{W(s)}s \,ds < \infty.
\end{align*}

\end{definition}
\noindent Hence, if $\w\in B^*_\infty$, we have that,  for every $0 < r < t$, 
$$
W(r) \log \parenthesis{\frac tr} \le \int_0^t \log \parenthesis{\frac ts} \w(s) ds = \int_0^t\frac{W(s)}s \,ds\leq \bs{\w}  W(t), 
$$
and therefore, if we define for $\la\in(0,\infty)$, 
$$
\overline{W}(\la):=\sup_{t>0} \frac{W(\la t)}{W(t)},
$$
then 
\begin{equation}\label{con el log por arriba}
\overline{W}(\la)\leq \bs{\w} \left(\log\dfrac 1\la \right)^{-1}, \qquad \, 0 < \la < 1.
\end{equation}

\medskip

\noi From here the following result follows  easily:

\begin{lemma}\label{better decay}
If $\w\in B^*_\infty$ then
$$
\overline{W}(\la) \leq e \la^{1/(e[\w]_{B^*_\infty})}, \qquad \, 0<\la<1. 
$$
\end{lemma}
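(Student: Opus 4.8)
The plan is to bootstrap the crude decay estimate \eqref{con el log por arriba} — which gives only $\overline{W}(\la)\lesssim (\log(1/\la))^{-1}$ — into genuine power decay by exploiting the submultiplicativity of $\overline{W}$. Two elementary observations do the work. First, since $W$ is nondecreasing (it is the primitive of the nonnegative weight $\w$), we have the trivial bound $\overline{W}(\la)\le 1$ for every $\la\in(0,1]$. Second, $\overline{W}$ is submultiplicative: for $0<\la,\mu\le 1$ and $t>0$,
\[
\frac{W(\la\mu t)}{W(t)}=\frac{W\big(\la(\mu t)\big)}{W(\mu t)}\cdot\frac{W(\mu t)}{W(t)}\le \overline{W}(\la)\,\frac{W(\mu t)}{W(t)},
\]
so taking the supremum over $t>0$ yields $\overline{W}(\la\mu)\le \overline{W}(\la)\,\overline{W}(\mu)$; in particular $\overline{W}(\mu^n)\le \overline{W}(\mu)^n$ for all $n\in\N$.

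Now write $c=\bs{\w}$, fix $\la\in(0,1)$ and set $L=\log(1/\la)$, so that $e\,\la^{1/(ec)}=e^{\,1-L/(ec)}$. If $L\le ec$ this quantity is $\ge 1$, and the desired inequality follows from the trivial bound $\overline{W}(\la)\le 1$. If $L> ec$, I would choose $n=\lfloor L/(ec)\rfloor\ge 1$ and set $\mu=\la^{1/n}\in(0,1)$, so that $\log(1/\mu)=L/n$. Applying \eqref{con el log por arriba} to $\mu$ and using $n\le L/(ec)$ we get $\overline{W}(\mu)\le c/\log(1/\mu)=cn/L\le 1/e$; hence, by submultiplicativity,
\[
\overline{W}(\la)=\overline{W}(\mu^n)\le \overline{W}(\mu)^n\le \Big(\frac{cn}{L}\Big)^{n}\le e^{-n}.
\]
Since $n=\lfloor L/(ec)\rfloor\ge L/(ec)-1$, this gives $\overline{W}(\la)\le e^{\,1-L/(ec)}=e\,\la^{1/(ec)}$, which is the claim.

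The argument is essentially a one-line computation once \eqref{con el log por arriba} and submultiplicativity are available, so there is no genuine obstacle; the only place that needs a moment's care is the choice of $n$, where one must check simultaneously that $n\ge 1$ (so that $\mu=\la^{1/n}$ is a legitimate base with $\mu<1$), that $n\le L/(ec)$ (so that $cn/L\le 1/e$ and the power $(cn/L)^n$ decays), and that the rounding loss $n\ge L/(ec)-1$ is exactly what accounts for the harmless factor $e$ in the final bound.
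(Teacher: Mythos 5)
Your proof is correct and follows essentially the same route as the paper: both bootstrap the logarithmic bound \eqref{con el log por arriba} into power decay via submultiplicativity of $\overline{W}$, the only difference being that you take an $n$-th root of the given $\la$ while the paper iterates powers of the fixed point $\la_0=e^{-e\bs{\w}}$ and invokes monotonicity of $\overline{W}$. The two discretizations are interchangeable and your handling of the choice of $n$ is sound.
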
 
\bdem
Let $\la_0=e^{-e\bs{\w}}$. Since $\overline{W}$ is submultiplicative, by \eqref{con el log por arriba} and induction on $n \in \N \cup \{0\}$  we get that
$$
\overline{W}(\la_0^n)\leq \left( \overline{W}(\la_0) \right)^n \leq \left(\frac{1}{e}\right)^n=\Big(\la_0^{n}\Big)^{\frac 1{e \bs{\w}}}.
$$

Now take $\lambda \in (0,1)$ and choose $n\in \N \cup \{0\}$ such that $\lambda_0^{n+1} \leq \lambda < \lambda_0^n$. Then, since $\overline{W}$ is increasing,
$$\overline{W}(\lambda)\leq \overline{W}(\la_0^n)\leq \Big(\la_0^{n}\Big)^{\frac{1}{e\, \bs{\w}}} \leq e \la^{1/(e[\w]_{B^*_\infty})}.$$
\edem

\noi A similar result can be obtained for the $B^*_q$ weights. 

\begin{lemma}\label{better decay q}
Let $1 < q < \infty$. If $\w\in B^*_q$, then
\begin{equation*} 
\overline{W}(\la)\leq 4q\, \bsq{\w}{q} \la^{\frac{1}{q}+\frac 1{4q\bsq{\w}{q}}}.
\end{equation*}
\end{lemma}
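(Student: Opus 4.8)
The plan is to show that the class $B^*_q$ self-improves — $\w\in B^*_q$ forces $\w\in B^*_{q'}$ for some $q'<q$ with a controlled constant — and then to read off the decay of $\overline W$ from \eqref{eq 1 de Bp} applied with exponent $1/q'$. Write $b:=\bsq{\w}{q}$. As a preliminary I would record the estimate $\int_0^t(t/s)^{1/q}\tfrac{W(s)}{s}\,ds\le 2qb\,W(t)$ for all $t>0$: it follows from the integration by parts already used just before \eqref{eq 1 de Bp}, since $\int_0^t(t/s)^{1/q}\w(s)\,ds=W(t)-\lim_{s\to0}(t/s)^{1/q}W(s)+\tfrac1q\int_0^t(t/s)^{1/q}\tfrac{W(s)}{s}\,ds$, the left-hand side is $\le bW(t)$ by definition of $B^*_q$, and the boundary term is $\le bW(t)$ by \eqref{eq 1 de Bp}.

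The core step is the self-improvement. Fix $\epsilon:=\tfrac1{4qb}$ and set $G(t):=\int_0^t(t/s)^{1/q+\epsilon}\w(s)\,ds$. Using the elementary identity $(t/s)^\epsilon=1+\epsilon\int_s^t(\tau/s)^\epsilon\,\tfrac{d\tau}{\tau}$, splitting $G$ accordingly and applying Fubini on $\{0<s<\tau<t\}$, one reaches the self-referential inequality
$$
G(t)\le bW(t)+\epsilon\int_0^t(t/\tau)^{1/q}\,\frac{G(\tau)}{\tau}\,d\tau .
$$
Substituting a provisional bound $G(\tau)\le CW(\tau)$ into the right-hand side and invoking the preliminary estimate gives $G(t)\le\bigl(b+\epsilon\cdot2qb\cdot C\bigr)W(t)=\bigl(b+\tfrac12 C\bigr)W(t)$, so $C\le 2b$; that is, $\w\in B^*_{q'}$ with $\tfrac1{q'}=\tfrac1q+\tfrac1{4qb}$ and $\bsq{\w}{q'}\le 2b$. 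To make this rigorous I would first run the argument with $(t/s)^\epsilon$ replaced by $\min\{(t/s)^\epsilon,N\}$ — for which $\sup_t G_N(t)/W(t)\le Nb<\infty$ a priori and the truncated form of the above identity still holds — obtaining $\sup_t G_N(t)/W(t)\le 2b$ uniformly in $N$, and then let $N\to\infty$ by monotone convergence. Handling this a priori finiteness point is the only genuine obstacle; everything else is bookkeeping.

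Finally, apply \eqref{eq 1 de Bp} with $q'$ in place of $q$: for $0<r<t$ one gets $\tfrac{W(r)}{W(t)}\le\bsq{\w}{q'}(r/t)^{1/q'}\le 2b\,(r/t)^{\frac1q+\frac1{4qb}}$, and taking the supremum over $t>0$ with $r=\la t$ yields $\overline W(\la)\le 2b\,\la^{\frac1q+\frac1{4qb}}\le 4q\,\bsq{\w}{q}\,\la^{\frac1q+\frac1{4qb}}$ for $0<\la<1$ (using $q>1$ and $b\ge1$), which is the assertion.
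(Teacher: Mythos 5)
Your proof is correct. It shares the paper's overall skeleton --- establish a quantitative self-improvement $B^*_q\subset B^*_{q'}$ with $\frac1{q'}=\frac1q+\frac1{4q\bsq{\w}{q}}$ and $\bsq{\w}{q'}\lesssim \bsq{\w}{q}$, then read off the decay of $\overline W$ from \eqref{eq 1 de Bp} --- but the key step is handled very differently. The paper simply cites Neugebauer \cite{n:n} for the openness of the $B^*_q$ classes and asserts, ``following the estimates used in \cite{n:n}'', that the choice $\eps=q/(4\bsq{\w}{q})$ gives $\bsq{\w}{q-\eps}\le 4q\bsq{\w}{q}$; you instead prove the quantitative self-improvement from scratch via a Gehring-type bootstrap: the identity $(t/s)^\eps=1+\eps\int_s^t(\tau/s)^\eps\,d\tau/\tau$ plus Fubini yields $G(t)\le \bsq{\w}{q}W(t)+\eps\int_0^t(t/\tau)^{1/q}G(\tau)\,d\tau/\tau$, and your preliminary integration-by-parts bound $\int_0^t(t/s)^{1/q}W(s)\,ds/s\le 2q\bsq{\w}{q}W(t)$ makes the iteration contract with the choice $\eps=\frac1{4q\bsq{\w}{q}}$, giving $\bsq{\w}{q'}\le 2\bsq{\w}{q}$. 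You also correctly flag and resolve the one genuine subtlety in such absorption arguments (a priori finiteness of $\sup_t G(t)/W(t)$) by truncation and monotone convergence, and the final numerology ($2\bsq{\w}{q}\le 4q\bsq{\w}{q}$ since $q>1$) matches the stated constant. What your route buys is a self-contained proof with explicit constants in place of an appeal to the literature; what the paper's route buys is brevity.
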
 

\begin{proof}

First of all, C.J. Neugebauer proved in \cite{n:n} that if $\w\in B^*_q$ for some $q\in (0,+\infty)$ then there exists some $\eps = \eps(q, \w) >0$ such that $\w\in B^*_{q-\eps}$. In particular, following the estimates used in  \cite{n:n}, for $q\geq 1$ and $\w\in B^*_q$, taking $\eps = \frac q{4[\w]_{B^*_q}}$ we have that
$$
\bsq{\w}{q-\eps}\leq 4 q\,\bsq{\w}{q}.
$$

\noindent Hence, from \eqref{eq 1 de Bp} we obtain that for every $0 < r < t$,
\begin{equation*}
\frac{W(r)}{W(t)} \leq \bsq{\w}{q-\eps} \Big(\frac rt\Big)^{\frac 1q}\Big(\frac rt\Big)^{\frac \varepsilon{q(q - \varepsilon)}}\leq 4q\, \bsq{\w}{q} \Big(\frac rt \Big)^{\frac 1q}\Big(\frac rt\Big)^{\frac 1{4q\bsq{\w}{q}}}.
\end{equation*}

\noindent Therefore, if $\la\in (0,1)$ we get that \begin{equation*}
\overline{W}(\la) \leq 4q\, \bsq{\w}{q}\la^{\frac{1}{q}+\frac 1{4q\bsq{\w}{q}}}.
\end{equation*}
\end{proof}

As an example of the weights presented we have the power weights, which will take an important role in the proofs of the main results.

\begin{lemma}\label{Bstar de potencias}
Let $\w(t)=t^{\tau-1}$.  

\medskip

\noindent
1)   If $0 < p < \infty$ and $0 < \tau \leq p $, then $\w\in B_p^{\cR}\cap B^*_\infty$ with 
$[\w]_{B_p^{\cR}}= 1$ and $ [\w]_{B^*_\infty}=\tau^{-1}$.

\medskip

\noindent
2) If $0 < q < \infty$ and  $\tau > \frac 1q$, then $\w\in B^*_q$ with $
[\w]_{B^*_q}=\dfrac \tau{\tau - \frac 1q}$. 

\end{lemma}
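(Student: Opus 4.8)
\textbf{Proof plan for Lemma \ref{Bstar de potencias}.}

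The plan is to handle the three statements by direct computation, since for the power weight $\w(t)=t^{\tau-1}$ one has the explicit primitive $W(t)=\int_0^t r^{\tau-1}\,dr=t^\tau/\tau$, valid precisely when $\tau>0$ (which is guaranteed in all three items). All the defining suprema then reduce to optimizing elementary functions of the ratio $r/t$ or $s/t$.

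For item 1), to see $\w\in B_p^{\cR}$ I would substitute $W(t)=t^\tau/\tau$ into the definition
$$
[\w]_{B_p^{\cR}}=\sup_{0<r\le t<\infty}\frac{rW(t)^{1/p}}{tW(r)^{1/p}}
=\sup_{0<r\le t<\infty}\frac{r\,(t^\tau/\tau)^{1/p}}{t\,(r^\tau/\tau)^{1/p}}
=\sup_{0<r\le t<\infty}\parenthesis{\frac rt}^{1-\frac\tau p},
$$
and the hypothesis $0<\tau\le p$ forces the exponent $1-\tau/p$ to be nonnegative, so the supremum over $r\le t$ equals $1$, attained as $r\to t$. For the $B^*_\infty$ part, I compute $\int_0^t \frac{W(s)}{s}\,ds=\int_0^t \frac{s^{\tau-1}}{\tau}\,ds=\frac{t^\tau}{\tau^2}$, hence $\frac1{W(t)}\int_0^t\frac{W(s)}{s}\,ds=\frac{t^\tau/\tau^2}{t^\tau/\tau}=\tau^{-1}$ for every $t>0$; the supremum is therefore exactly $\tau^{-1}$, which also confirms $\w\in B^*_\infty$ since $\tau>0$.

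For item 2), with $\tau>1/q$ I compute
$$
\frac1{W(t)}\int_0^t\parenthesis{\frac ts}^{1/q}\w(s)\,ds
=\frac{\tau}{t^\tau}\,t^{1/q}\int_0^t s^{\tau-1-\frac1q}\,ds
=\frac{\tau}{t^\tau}\,t^{1/q}\cdot\frac{t^{\tau-1/q}}{\tau-\frac1q}
=\frac{\tau}{\tau-\frac1q},
$$
where the inner integral converges exactly because $\tau-1/q>0$; again the expression is independent of $t$, so the supremum is $\frac{\tau}{\tau-1/q}$, proving both membership and the stated value. The only subtlety to keep in mind is that every integral here requires the relevant exponent to be $>-1$ so the integral near $0$ converges, and in each case that is exactly what the hypothesis ($\tau\le p$ together with $\tau>0$ in item 1, $\tau>1/q$ in item 2) provides; there is no genuine obstacle, the lemma is a routine verification whose point is to record the precise constants for later use.
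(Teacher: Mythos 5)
Your computation is correct and is exactly the routine verification the lemma calls for; the paper in fact omits the proof entirely, treating it as immediate from $W(t)=t^{\tau}/\tau$. All three constants check out, and your remark about convergence of the integrals near $0$ correctly identifies the only place the hypotheses on $\tau$ are used.
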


\section{Proof of our main results}\label{main}

\begin{proof}[Proof of the necessity of Theorem \ref{Teo-1}] We will first prove the result when $f=\chi_E$ with $E$ a measurable set of finite measure. Then, using our hypothesis 
with $\w(t)=t^{\tau-1}$ and Lemma \ref{Bstar de potencias},  we get that, for every $t > 0$ and every $\tau\in (0,1]$, 
\begin{equation}\label{pp1}
 (T\chi_E)^*(t) \lesssim \ \fii(\tau^{-1}) \left(\frac{|E|}{t}\right)^\tau. 
\end{equation}
Taking the infimum in $\tau \in (0,1]$, and using Lemma \ref{L infimo log log} we obtain that
\begin{align*}
    (T\chi_E)^*\left(t\right) & \lesssim \left[\left( \frac{|E|}t \right)\chi_{\left(|E|, \infty\right)}(t) 
    + \fii\left(1+\log \frac{|E|}{t} \right)\chi_{\left(0, |E|\right)}(t) \right], 
\end{align*}
and by Lemma~\ref{comparables}, 
\begin{align*}
    (T\chi_E)^*\left(t\right) & \lesssim \frac1t\int_0^t (\chi_E)^*(s)\, ds + \int_t^\infty \left(1+\log\frac st\right)^{- 1}\fii\left(1+\log\frac st \right) (\chi_E)^*(s) \, \frac{ds}s  
    \\&= S_{1,\infty,\fii}(\chi_E)^*(t).
\end{align*}

The extension to simple functions with compact support follows the same lines as the proof of Theorem III.4.7 of  \cite{bs:bs}. We include the computations adapted to our case for the sake of completeness. First of all, consider a positive simple function 
\begin{equation}\label{fsimplefunc}
f=\sum_{j=1}^na_j\chi_{F_j},
\end{equation}
where $F_1\subseteq F_2\subseteq \ldots\subseteq F_n$  have finite measure. Then
$$
f^*=\sum_{j=1}^na_j\chi_{[0,|F_j|)}.
$$
Using what we have already proved for characteristic functions we get 
\begin{equation}\label{Tf**estimationTeo2}\begin{split}
(Tf)^{**}(t)&\leq \sum_{j=1}^n a_j \left( T(\chi_{F_j})\right)^{**}(t)\lesssim \sum_{j=1}^n a_j \left( S_{1,\infty,\fii}(\chi_{[0,|F_j|)})\right)^{**}(t)\\
&=\left( S_{1,\infty,\fii}\left(\sum_{j=1}^n a_j \chi_{[0,|F_j|})\right)\right)^{**}(t)=  S_{1,\infty,\fii}(f^*)^{**}(t).
\end{split}
\end{equation}
Since $ S_{1,\infty,\fii}(f^*)^{**}= S_{1,\infty,\fii}(f^{**})$ (see Lemma~\ref{GeneralizedCalderonProperty}) we finally obtain that
\begin{equation}\label{T vs S doble star}
(Tf)^{**}(t)\lesssim  S_{1,\infty,\fii}(f^{**})(t).
\end{equation}

Fix $t>0$ and consider the set $E=\{x: f(x)>f^*(t)\}$. Using that set define
\begin{equation}\label{g_h_def}
g=(f-f^*(t))^+\chi_E \peso{and} h=f^*(t)\chi_E + f\chi_{E^c}.
\end{equation}
Then
$$
g^*(r)=(f^*(r)-f^*(t))^+  \peso{and}  h^*(r)=\min\{f^*(r),f^*(t)\}.
$$Since  $\w=1$ belong to $B_1^\cR\cap B^*_\infty$, the corresponding weak inequality leads to
$$
(Tg)^*(t/2)\lesssim \frac1t\int_0^t f^*(s)\,ds - f^*(t).
$$  
On the other hand, using \eqref{T vs S doble star} we get 
$$
(Th)^{**}(t)\lesssim S_{1,\infty,\fii}(h^{**})(t)=P_1(h^{**})(t)+Q_{\infty,\fii}(h^{**})(t)= f^*(t) +Q_{\infty,\fii}(h^{**})(t),
$$
where the last equality holds because $h^*(r)=f^*(t)$ for every $r\in[0,t]$. Now, consider the auxiliary function
$$
\widetilde{\fii}(x)=\frac{\fii(x)}x.
$$
By Fubini's theorem, 
\begin{equation}\label{Qinftyvarphih^**}
\begin{split}
Q_{\infty,\fii}(h^{**})(t) &=\int_t^\infty \widetilde{\fii} \left(1+\log \frac st\right) h^{**}(s) \, \frac{ds}s \\ 
&=\int_0^t f^*(t)\left(\int_t^{\infty} \widetilde\fii \left(1+\log \frac st\right) \frac{ds}{s^2 }\right)dr\\
&\quad +\int_t^\infty \left(\int_r^{\infty} \widetilde\fii\left(1+\log \frac st\right) \frac{ds}{s^2}\right) f^*(r)\,dr=I_1+I_2.
\end{split}
\end{equation}

On the one hand, the first integral is a multiple of $f^*(t)$. Indeed, 
\begin{align*}
I_1&=\int_0^t f^*(t)\left(\int_t^{\infty}\widetilde\fii\left(1+\log \frac st\right) \frac{ds}{s^2 }\right)dr
\\&=\frac{1}{t}\int_0^t f^*(t)\left(\int_1^{\infty} \widetilde\fii\left(1+\log u\right) \frac{du}{u^2 }\right)dr=C_1\, f^*(t).
\intertext{On the other hand, to study the second integral we will make use of Lemma \ref{comparables 2}. To do so, we take any $\la > 1$ and observe that}
I_2&=\int_t^\infty \left(\int_r^{\infty} \widetilde\fii\left(1+\log \frac st\right) \frac{ds}{s^2}\right) f^*(r)\,dr\\
&=\int_t^{\la t}+\int_{\la t}^\infty \left(\int_r^{\infty}\widetilde\fii\left(1+\log \frac st\right) \frac{ds}{s^2}\right) f^*(r)\, dr.
\end{align*}
The first part is similar to $I_1$, and it can be also controlled by a multiple of $f^*(t)$. Indeed, using that $f^*$ is decreasing we get 
\begin{align*}
\int_t^{\la t} \left(\int_r^{\infty} \widetilde\fii\left(1+\log \frac st\right) \frac{ds}{s^2}\right) f^*(r)\, dr&\leq \frac{f^*(t)}{t}\int_t^{\la t} \left(\int_{r/t}^{\infty}\widetilde\fii\left(1+\log u\right) \frac{du}{u^2}\right) dr\\
&\leq \frac{f^*(t)}{t}\int_t^{\la t} \left(\int_{1}^{\infty} \widetilde\fii\left(1+\log u\right) \frac{du}{u^2}\right) dr\\
&=C_2 \la\, f^*(t).
\end{align*}
For the second part, by  Lemma \ref{comparables 2} we know that there exists some $\la = \la(\varphi) > 1$ such that
$$
\int_r^{\infty}  \widetilde\fii\left(1+\log \frac st\right)  \frac{ds}{s^2}\approx    \frac{1}{r}\ \widetilde\fii \left(1+\log \frac rt\right) .
$$
Therefore,
\begin{align*}
\int_{\la t}^\infty \left(\int_r^{\infty} \widetilde\fii\left(1+\log \frac st\right) \frac{ds}{s^2}\right) f^*(r)\, dr \lesssim
\int_{\la t}^\infty \widetilde\fii\left(1+\log \frac rt\right)   f^*(r)\, \frac{dr}{r}.
\end{align*}

\medskip

In conclusion, putting $I_1$ and $I_2$ together we obtain that 
\begin{align*}
Q_{\infty,\varphi}(h^{**})(t) &\lesssim  f^*(t)+\int_t^\infty \widetilde\fii\left(1+\log \frac rt\right)   f^*(r)\,\frac{dr}{r}=f^*(t)+Q_{\infty,\fii} (f^*)(t).
\end{align*}
Thus,
\begin{align*}
(Tf)^*(t)& \leq  (Tg)^*(t/2) + (Th)^{**}(t/2) \lesssim S_{1,\infty,\varphi}(f^*)(t).
\end{align*}

Finally, the general case follows from this particular case dividing the function in its positive and negative parts.

\noindent \textit{Proof of the sufficiency of Theorem \ref{Teo-1}.} Suppose that $(Tf)^*(t) \lesssim S_{1,\infty,\varphi}(f^*)(t)$ for every $t > 0$.  The operator $S_{1,\infty,\fii}$ has the form
$$
S_{1,\infty,\fii}f(t)=\int_0^\infty k(t,s) f(s)ds,
$$
where the kernel is
$$
k(t, s)= \frac 1t \chi_{[0,t)}(s) + \frac{1}{s} \left(1+\log \frac s t \right)^{-1}\fii \left(1+\log \frac s t \right) \chi_{[t,\infty)} (s).
$$

\medskip

\noindent So, using Theorem 3.3 in \cite{cs:cs}, the norm $\norm{S_{1,\infty,\varphi}}_{\Lambda^1(w)\rightarrow\Lambda^{1,\infty}(w)}$ can be estimated by 
$$
A_k:=\sup_{t>0} \left( \sup_{r>0} \left( \int_0^r k(t,s) ds\right) W(r)^{-1} \right) W(t).
$$
Note that, if $0 < r<t$ then we have 
$$
\int_0^r k(t,s) \,ds= \frac rt.
$$
On the other hand, if $r>t$, by Lemma \ref{comparables} we obtain

$$
\int_0^r k(t,s)\, ds \approx  \fii \left(1+\log \frac r t \right). 
$$
As a consequence we have that
$$
A_k\approx \sup_{t>0} \max \left \{     \left( \sup_{0 < r<t} \frac{r}{t} \frac{W(t)}{W(r)}\right) , \left( \sup_{r>t}  
\fii \left(1+\log \frac r t \right)\frac{W(t)}{W(r)}\right)  \right\}.
$$
Since
$$
 \sup_{0 < r<t} \frac{r}{t} \frac{W(t)}{W(r)} \leq [\w]_{B_1^{\cR}},
$$
and $ [\w]_{B_1^{\cR}}\geq 1$ we get that
$$
A_k \lesssim  \max\left\{[\w]_{B_1^{\cR}}\  ,\  \sup_{t>0} \sup_{r>t} \left\{  \fii\left(1+\log \frac r t \right) \frac{W(t)}{W(r)}\right\}\right\}.
$$

\medskip

Further, if $\la=t/r < 1$, then by Corollary \ref{better decay}
$$
\frac{W(t)}{W(r)}=\frac{W(\la r)}{W(r)}\leq e  \la^{1/(e[\w]_{B^*_{\infty}})}.
$$
Therefore, 
\begin{align*}
\sup_{t>0} \sup_{r>t} \left\{  \fii \left(1+\log \frac r t \right) \frac{W(t)}{W(r)}\right\}
&\leq e \sup_{\lambda<1} \left\{  \fii  \left(1+\log  \frac{1}{\lambda} \right)     \la^{1/(e[\w]_{B^*_{\infty}})} \right\} \\
&\leq e^{1+1/e}\ \sup_{x>1} \left\{  \fii (x) e^{-x/(e [\w]_{B^*_{\infty}})} \right\}. 
\end{align*}
Finally, by Lemma \ref{L sup log log} and the inequality  \eqref{por la logconcavidad2} we obtain that 
\begin{align*}
\sup_{t>0} \sup_{r>t} &\left\{  \fii \left(1+\log \frac r t \right) \frac{W(t)}{W(r)}\right\}
\lesssim \fii (e[\w]_{B^*_{\infty}})\lesssim \fii ([\w]_{B^*_{\infty}}),
\end{align*}
and we arrive to the desired result
$$
A_k\lesssim \max\{[\w]_{B_1^{\cR}}\, ,\, \fii ([\w]_{B^*_{\infty}})\}.$$
\end{proof}

\begin{proof}[Proof of Theorem \ref{Teo-main1-bis}]
The proof follows by  direct application of Theorems \ref{main1} and \ref{Teo-1}. 
\end{proof}

\begin{proof}[Proof of Corollary \ref{fip}]  We see that the hypothesis is equivalent to equation \eqref{pp1}, and hence,  the proof follows immediately from  the proof of the necessity of Theorem \ref{Teo-1}.
\end{proof}

\begin{proof}[Proof of Theorem \ref{fip2}]    By the Rubio de Francia's extrapolation \cite{d:d}, the hypothesis implies that, for every $p\ge 1$, 
$$
||Tf||_{L^{p, \infty}}\lesssim \varphi(p) ||f||_{L^{p,1}}, 
$$
and the results follows from Corollary \ref{fip}.
\end{proof}

\medskip

Now, in order to prove Theorem \ref{Teo-main2-bis}, we first need to have the analogue of Theorem \ref{main1} which can be found in \cite{bc:bc2}.

\begin{theorem}\label{main2} 
Let $1 \leq p_0 < \infty$, $0 < \alpha \leq 1$ and let $T$ be an operator satisfying that, for every $u \in A_1$, $$T:L^{p_0,1}(u^\alpha) \rightarrow L^{p_0,\infty}(u^\alpha)$$ is bounded with constant less than or equal to $\varphi(||u||_{A_1})$, where $\varphi$ is an increasing function in $[1,\infty)$. Then, for every  $\w \in B_{\frac 1{p_0}}^{\cR}\cap B^*_{\frac{p_0}{1-\alpha}}$,
 $$
 T:\llouno\to\lloinuno
 $$
is bounded with norm less than or equal to $C_1[\w]_{B_{\frac 1{p_0}}^{\cR}}^{\frac 1{p_0}} \overline\varphi\parenthesis{C_2[\w]_{B_{\frac{p_0}{1-\alpha}}^*}}$, for some positive constants $C_1$, $C_2$ independent of $\w$ and where \begin{equation}\label{overlinevarphi}
    \overline{\varphi}(x) = \left\{\begin{array}{ll} \varphi(x^{\frac{p_0}\alpha}), & 0 < \alpha < 1, \vspace{3mm} \\ \varphi(x), & \alpha = 1. \end{array}\right.
\end{equation}
\end{theorem}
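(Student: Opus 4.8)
The plan is to follow the proof of Theorem~\ref{main1} from~\cite{bc:bc}, carrying along the extra parameters $p_0$ and $\alpha$. Since $\|Tf\|_{\lloinuno}=\sup_{\lambda>0}\lambda\,W\big(|\{|Tf|>\lambda\}|\big)$, it suffices, for each fixed $\lambda>0$, to bound $\lambda\,W(|\Omega|)$, where $\Omega=\{|Tf|>\lambda\}$, by the right-hand side times $\|f\|_{\Lambda^1(\w)}$ (we may take $f\ge0$). As in the proof of the necessity of Theorem~\ref{Teo-1}, I would split $f$ at the level $f^*(|\Omega|)$: the unbounded part $g=(f-f^*(|\Omega|))^+$ (whose level sets all have measure $\le|\Omega|$) will be responsible for the ``$P$''-type term and controlled via $\w\in B^{\mathcal R}_{1/p_0}$, while the truncation $h=\min(f,f^*(|\Omega|))$ (which is flat on $(0,|\Omega|)$) will produce the ``$Q$''-type term and carry the factor $\overline\varphi\big(C_2[\w]_{B^*_{p_0/(1-\alpha)}}\big)$; sublinearity of $T$ together with the quasiconcavity of $W$ reduces matters to these two pieces.

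For each piece the key device is a Coifman--Rochberg test weight adapted to $\Omega$, namely $u=(M\chi_\Omega)^{1-\epsilon}\in A_1$ with $\|u\|_{A_1}\lesssim_n\epsilon^{-1}$. Because $M\chi_\Omega\le1$ everywhere and $M\chi_\Omega=1$ a.e.\ on $\Omega$, one has $u\equiv1$ a.e.\ on $\Omega$, hence $u^\alpha(\Omega)=|\Omega|$; applying the hypothesis to $f_1\in\{g,h\}$ gives $\lambda|\Omega|^{1/p_0}\le\|Tf_1\|_{L^{p_0,\infty}(u^\alpha)}\le\varphi(C_n/\epsilon)\,\|f_1\|_{L^{p_0,1}(u^\alpha)}$. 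It then remains to estimate $\|f_1\|_{L^{p_0,1}(u^\alpha)}$ by $\|f_1\|_{\Lambda^1(\w)}$. For this I would use $\|f_1\|_{L^{p_0,1}(v)}\approx\sum_k2^k\,v(\{f_1>2^k\})^{1/p_0}$ together with the elementary bound $\int_{\{f_1>2^k\}}(M\chi_\Omega)^{(1-\epsilon)\alpha}\lesssim\frac{1}{1-(1-\epsilon)\alpha}\,m_k^{1-(1-\epsilon)\alpha}\min(|\Omega|,m_k)^{(1-\epsilon)\alpha}$ (with $m_k=|\{f_1>2^k\}|$, following from $M\chi_\Omega\le1$ and the weak $(1,1)$ inequality for $M$), and then transfer the resulting sum back to the $\w$-side using the quasi-decrease of $t\mapsto W(t)^{p_0}/t$ ($\w\in B^{\mathcal R}_{1/p_0}$) for $g$ and, for $h$, the sharpened decay $\overline W(\mu)\lesssim[\w]_{B^*_{p_0/(1-\alpha)}}\mu^{(1-\alpha)/p_0+c/[\w]_{B^*_{p_0/(1-\alpha)}}}$ of Lemma~\ref{better decay q}, the extra decay being exactly what compensates for having replaced the power $1$ by $1-\epsilon$.

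The step I expect to be the main obstacle is the optimization of $\epsilon$ for the truncation $h$: one must choose $\epsilon$ so that the argument $C_n/\epsilon$ of $\varphi$ becomes comparable to $[\w]_{B^*_{p_0/(1-\alpha)}}^{\,p_0/\alpha}$ — so that $\varphi(C_n/\epsilon)$ collapses to $\varphi\big((C_2[\w]_{B^*_{p_0/(1-\alpha)}})^{p_0/\alpha}\big)=\overline\varphi\big(C_2[\w]_{B^*_{p_0/(1-\alpha)}}\big)$ — while keeping every auxiliary loss (in particular the factor $(1-(1-\epsilon)\alpha)^{-1/p_0}$ and the one produced by Lemma~\ref{better decay q}) bounded by an absolute constant; reconciling these two requirements is the delicate heart of the argument, and it is conceivable that a slightly more refined test weight, or an intermediate application of limited-range extrapolation, is needed to produce precisely the exponent $p_0/\alpha$. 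Finally, the endpoint $\alpha=1$ has to be treated apart: there $(M\chi_\Omega)^{(1-\epsilon)\alpha}=(M\chi_\Omega)^{1-\epsilon}$ is no longer integrable over the level sets of $f$ as $\epsilon\to0$, so one keeps $\epsilon>0$ throughout, uses $B^*_\infty$ and Lemma~\ref{better decay} in place of $B^*_{p_0/(1-\alpha)}$ and Lemma~\ref{better decay q}, and the optimization now yields $\overline\varphi(x)=\varphi(x)$, which is the second case in~\eqref{overlinevarphi}. These are precisely the computations carried out in~\cite{bc:bc2}.
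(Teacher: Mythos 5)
First, a point of comparison: the paper itself does not prove Theorem~\ref{main2} --- it is imported verbatim from the preprint \cite{bc:bc2} --- so there is no in-paper argument to measure your sketch against. Your closing sentence ("these are precisely the computations carried out in \cite{bc:bc2}") therefore puts you in the same position as the paper: the decisive computations are deferred to the same external reference. Judged on its own, your outline does follow the natural strategy (the one behind Theorem~\ref{main1}): Coifman--Rochberg test weights $u=(M\chi_E)^{1-\varepsilon}$ with $\|u\|_{A_1}\lesssim\varepsilon^{-1}$, a level-set estimate of $\|f\|_{L^{p_0,1}(u^\alpha)}$ via $\int_{\{f>2^k\}}(M\chi_E)^{(1-\varepsilon)\alpha}$, transfer to the $\w$-side through $B^{\cR}_{1/p_0}$ for the levels with $m_k\le|E|$ and through Lemma~\ref{better decay q} for $m_k>|E|$, and a final optimization in $\varepsilon$. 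Two remarks on the set-up: the splitting $f=g+h$ is not needed for a weak-type $\Lambda^1\to\Lambda^{1,\infty}$ bound (one applies the hypothesis to $f$ itself and splits the resulting sum at $|E|$), and as written it is incorrect, because a test weight adapted to $\Omega=\{|Tf|>\lambda\}$ does not give $\lambda|\Omega|^{1/p_0}\le\|Tf_1\|_{L^{p_0,\infty}(u^\alpha)}$ for $f_1\in\{g,h\}$: the relevant superlevel sets are those of $Tg$ and $Th$, not $\Omega$, so the weight must be adapted to each of them separately.

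The genuine gap is exactly the step you flag as "the delicate heart of the argument", and it does not resolve the way you hope. The constraint imposed by Lemma~\ref{better decay q} is that the extra exponent $\varepsilon\alpha/p_0$ picked up by the weight must not exceed the gain $1/(4q[\w]_{B^*_q})$ with $q=p_0/(1-\alpha)$; this forces $\varepsilon\lesssim(1-\alpha)/(\alpha[\w]_{B^*_{q}})$ and hence yields $\varphi(C_2[\w]_{B^*_{q}})$ --- a \emph{linear} argument of $\varphi$, not the exponent $p_0/\alpha$ appearing in $\overline\varphi$, so your proposed calibration "$C_n/\varepsilon$ comparable to $[\w]^{p_0/\alpha}$" is not what the computation produces and leaves the provenance of $\overline\varphi$ unexplained. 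More seriously, your requirement that the loss produced by Lemma~\ref{better decay q} be "bounded by an absolute constant" cannot be met: that lemma carries an unavoidable multiplicative factor $4q[\w]_{B^*_{q}}$ (unlike Lemma~\ref{better decay}, whose constant is the absolute $e$), so the argument as sketched delivers a bound of the shape $\big([\w]_{B^{\cR}_{1/p_0}}^{1/p_0}+C_{\alpha,p_0}[\w]_{B^*_{q}}\big)\varphi\big(C_2[\w]_{B^*_{q}}\big)$ rather than the stated $C_1[\w]_{B^{\cR}_{1/p_0}}^{1/p_0}\overline\varphi\big(C_2[\w]_{B^*_{q}}\big)$; since $\varphi$ is only assumed increasing, the extra factor cannot be absorbed. (The same multiplicative loss is visible in the paper's own Theorem~\ref{Teo-2} for $q_2<\infty$.) So either the bound you obtain is genuinely different from the one claimed, or --- as you yourself concede --- some additional idea (a different family of test weights, or an intermediate extrapolation) is required; in either case the proposal does not yet prove Theorem~\ref{main2} as stated.
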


Further, we will need the following generalization of Theorem \ref{Teo-1}.

\begin{theorem}\label{Teo-2}
Let $T$ be a sublinear operator and let $\varphi$ be an admissible function. If for every exponent $1 \leq q_1 < q_2 \leq \infty$ and for every weight $\w \in B_{\frac 1{q_1}}^{\cR}\cap B^*_{q_2}$,$$T:\llouno\to\lloinuno$$ is bounded with norm less than or equal to $C[\w]_{B_{\frac 1{q_1}}^{\cR}}^{\frac 1{q_1}}\varphi\parenthesis{[\w]_{B_{q_2}^*}}$ then, for every $t>0$ and every measurable function $f$, 
\begin{equation*} 
\begin{split}
    (Tf)^*(t)& \lesssim S_{q_1,q_2,\varphi}(f^*)(t) \\ &:=  \frac 1{t^{\frac 1{q_1}}}\int_0^t f^*(s) \, \frac{ds}{s^{1 - \frac 1{q_1}}} + \left\{\begin{array}{ll} \displaystyle
        \frac 1{t^{\frac 1{q_2}}}\int_t^\infty \varphi\parenthesis{1 + \log \frac st}f^*(s)\, \frac{ds}{s^{1 - \frac 1{q_2}}}, & q_2 < \infty, \vspace{3mm} \\ \displaystyle
        \int_t^\infty \left(1 + \log \frac st \right)^{-1} \varphi\parenthesis{1 + \log \frac st}f^*(s)\, \frac{ds}s, & q_2 = \infty.
    \end{array}\right.
\end{split}
\end{equation*}

\medskip

\noi Conversely, suppose that $(Tf)^*(t) \lesssim S_{q_1,q_2,\varphi}(f^*)(t)$, $t > 0$. Then

\medskip

$$
\|T\|_{\llouno\to\lloinuno}\lesssim \begin{cases}
[\w]_{B_{\frac 1{q_1}}^{\cR}}^{\frac 1{q_1}}\ \bsq{\w}{q_2}\, \fii(\bsq{\w}{q_2}), & \mbox{$q_2<\infty$},\\ [\w]_{B_{\frac 1{q_1}}^{\cR}}^{\frac 1{q_1}}\, \fii(\bs{\w}), & \mbox{$q_2=\infty$}.
\end{cases}
$$

\end{theorem}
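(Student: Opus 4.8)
The plan is to adapt both halves of the proof of Theorem~\ref{Teo-1}, replacing the weight $\w\equiv 1$ (which no longer belongs to $B^{\cR}_{1/q_1}$ once $q_1>1$) by suitable power weights and keeping the Hardy exponent $1/q_1$ throughout. For the direct implication I would first take $f=\chi_E$ with $|E|<\infty$, apply the hypothesis with $\w(t)=t^{\tau-1}$, and use Lemma~\ref{Bstar de potencias}: one gets $\w\in B^{\cR}_{1/q_1}\cap B^*_{q_2}$ with $[\w]_{B^{\cR}_{1/q_1}}=1$ and $[\w]_{B^*_{q_2}}=\tfrac{\tau}{\tau-1/q_2}$ whenever $1/q_2<\tau\le 1/q_1$ --- an interval that is non-empty precisely because $q_1<q_2$ --- and, when $q_2=\infty$, $\bs{\w}=\tau^{-1}$ for every $0<\tau\le 1/q_1$. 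Since $\|\chi_E\|_{\Lambda^1(\w)}=W(|E|)$ and $W(t)=t^\tau/\tau$, this yields
\[
(T\chi_E)^*(t)\ \lesssim\ \fii\!\Big(\tfrac{\tau}{\tau-1/q_2}\Big)\,\Big(\tfrac{|E|}{t}\Big)^{\tau},\qquad 1/q_2<\tau\le 1/q_1 .
\]
For $t>|E|$ I would take $\tau=1/q_1$, obtaining $(T\chi_E)^*(t)\lesssim (|E|/t)^{1/q_1}$; for $t<|E|$ I would substitute $\tau=\tfrac1{q_2}+\sigma$, observe that $\fii(1+\tfrac1{q_2\sigma})\lesssim\fii((q_2\sigma)^{-1})$ when $q_2\sigma\le1$ by \eqref{por la logconcavidad2}, and optimise in $\sigma$ via Lemma~\ref{L infimo log log} and \eqref{por la logconcavidad2}, landing on $(T\chi_E)^*(t)\lesssim (|E|/t)^{1/q_2}\,\fii(1+\log\tfrac{|E|}t)$ (for $q_2=\infty$ one simply optimises in $\tau$ directly). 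By Lemma~\ref{comparables} both right-hand sides are, up to constants, $S_{q_1,q_2,\fii}(\chi_{[0,|E|)})(t)$.

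I would then extend to positive simple functions $f=\sum_{j}a_j\chi_{F_j}$ with $F_1\subseteq\cdots\subseteq F_n$ of finite measure exactly as in Theorem~\ref{Teo-1}: sublinearity and the previous step give $(Tf)^{**}(t)\lesssim S_{q_1,q_2,\fii}(f^*)^{**}(t)$, which Lemma~\ref{GeneralizedCalderonProperty} turns into $(Tf)^{**}(t)\lesssim S_{q_1,q_2,\fii}(f^{**})(t)$. Next I would fix $t>0$, put $E=\{|f|>f^*(t)\}$, and split $f=g+h$ with $g=(f-f^*(t))^+\chi_E$, $h=f^*(t)\chi_E+f\chi_{E^c}$. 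The $g$-part I would handle with the power weight $\w(s)=s^{1/q_1-1}$, which by Lemma~\ref{Bstar de potencias} lies in $B^{\cR}_{1/q_1}\cap B^*_{q_2}$ with both constants $O(1)$ (again using $q_1<q_2$), giving $(Tg)^*(t/2)\lesssim P_{q_1}(f^*)(t)$. For the $h$-part, since $h^*\equiv f^*(t)$ on $[0,t]$, the $P_{q_1}$-term of $S_{q_1,q_2,\fii}(h^{**})(t/2)$ is a multiple of $f^*(t)$ and, by Fubini and Lemma~\ref{comparables 2} applied at scale $t/2$, its $Q_{q_2,\fii}$-term is $\lesssim f^*(t)+Q_{q_2,\fii}(f^*)(t)$; together with $f^*(t)\lesssim P_{q_1}(f^*)(t)$ and $(Tf)^*(t)\le (Tg)^*(t/2)+(Th)^{**}(t/2)$ this gives $(Tf)^*(t)\lesssim S_{q_1,q_2,\fii}(f^*)(t)$. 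The passage to arbitrary measurable $f$ is then routine (positive and negative parts, approximation by simple functions with compact support, as in \cite{bs:bs}).

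For the converse I would mirror the sufficiency part of Theorem~\ref{Teo-1}. Write $S_{q_1,q_2,\fii}f(t)=\int_0^\infty k(t,s)f(s)\,ds$ with
\[
k(t,s)=\frac{\chi_{[0,t)}(s)}{t^{1/q_1}s^{1-1/q_1}}+\frac{\fii\!\left(1+\log\tfrac st\right)}{t^{1/q_2}s^{1-1/q_2}}\,\chi_{[t,\infty)}(s)
\]
(and the obvious variant for $q_2=\infty$), and invoke Theorem~3.3 of \cite{cs:cs} to bound $\|S_{q_1,q_2,\fii}\|_{\llouno\to\lloinuno}$ by $A_k=\sup_t W(t)\sup_r W(r)^{-1}\int_0^r k(t,s)\,ds$. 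A direct computation gives $\int_0^r k(t,s)\,ds=q_1(r/t)^{1/q_1}$ for $r<t$ and, by Lemma~\ref{comparables}, $\int_0^r k(t,s)\,ds\approx \fii(1+\log\tfrac rt)(r/t)^{1/q_2}$ for $r\ge t$ (with $(r/t)^{1/q_2}$ replaced by $1$ if $q_2=\infty$). The contribution of $r<t$ is $q_1[\w]_{B^{\cR}_{1/q_1}}^{1/q_1}$ by the very definition of $B^{\cR}_{1/q_1}$; for the contribution of $r\ge t$, setting $\la=t/r$ and bounding $W(\la r)/W(r)$ by Lemma~\ref{better decay q} (or Lemma~\ref{better decay} when $q_2=\infty$), the supremum becomes $\lesssim [\w]_{B^*_{q_2}}\sup_{x\ge1}\fii(x)e^{-x/(c[\w]_{B^*_{q_2}})}$, which is $\lesssim [\w]_{B^*_{q_2}}\fii([\w]_{B^*_{q_2}})$ (resp. $\lesssim\fii(\bs{\w})$) by Lemma~\ref{L sup log log} and \eqref{por la logconcavidad2}. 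Since $[\w]_{B^{\cR}_{1/q_1}}^{1/q_1}$, $[\w]_{B^*_{q_2}}$ and $\fii([\w]_{B^*_{q_2}})$ are all $\ge 1$, $A_k$ is controlled by their product, which is the asserted bound.

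The main obstacle is the necessity, and inside it the correct choice of power weights: the two membership conditions $\tau\le 1/q_1$ (for $B^{\cR}_{1/q_1}$) and $\tau>1/q_2$ (for $B^*_{q_2}$) confine the exponent to $(1/q_2,1/q_1]$, so the whole argument hinges on $q_1<q_2$, and the weight constant $\fii(\tfrac{\tau}{\tau-1/q_2})$ blows up as $\tau\downarrow 1/q_2$, which is exactly why the substitution $\tau=\tfrac1{q_2}+\sigma$ is needed to make Lemma~\ref{L infimo log log} applicable. Once the weights are chosen, the remaining computations are the same bookkeeping as in Theorem~\ref{Teo-1}, now carried out at scale $t/2$ and with Hardy exponent $1/q_1$ in place of $1$.
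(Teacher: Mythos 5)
Your proposal is correct and follows essentially the same route as the paper: the same power weights $t^{\tau-1}$ with $\tau\in(1/q_2,1/q_1]$ optimised via Lemma~\ref{L infimo log log}, the same extension to simple functions and $g+h$ splitting with $\w(s)=s^{1/q_1-1}$, and the same kernel/$A_k$ argument via Theorem~3.3 of \cite{cs:cs} together with Lemmas~\ref{better decay q} and~\ref{L sup log log} for the converse. The only cosmetic deviations (choosing $\tau=1/q_1$ directly when $t>|E|$ instead of the $x\le 0$ branch of Lemma~\ref{L infimo log log}, and not writing out the auxiliary function $\widetilde\varphi$ explicitly in the $Q_{q_2,\varphi}(h^{**})$ estimate) do not affect the argument.
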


\medskip

\begin{proof}

First, assume that for every weight $\w \in B_{\frac 1{q_1}}^{\cR}\cap B^*_{q_2}$,$$T:\llouno\to\lloinuno$$ is bounded with norm less than or equal to $C[\w]_{B_{\frac 1{q_1}}^{\cR}}^{\frac 1{q_1}}\varphi\parenthesis{[\w]_{B_{q_2}^*}}$. Note that by Lemma~\ref{Bstar de potencias}, $\w(t)=t^{\tau - 1}$ belong to $B_{\frac 1{q_1}}^{\cR}\cap B^*_{q_2}$ for every $\tau \in \left(\frac 1{q_2}, \frac 1{q_1}\right]$. Hence, using our hypothesis, we obtain that for every measurable set $E$, $$ (T\chi_E)^*(t) \lesssim \varphi\parenthesis{\sqbracket{\tau - \frac1{q_2}}^{-1}}\left(\frac{|E|}{t}\right)^\tau = \left[\varphi\parenthesis{\tilde{\tau}^{-1}}\left(\frac{|E|}{t}\right)^{\tilde{\tau}}\right]\left(\frac{|E|}{t}\right)^{\frac 1{q_2}}, 
$$
with $\tilde{\tau} = \tau - \frac 1{q_2}$. Hence, taking the infimum in $\tilde{\tau} \in \left(0, \frac 1{q_1} - \frac 1{q_2}\right]$ and using Lemma \ref{L infimo log log} we get that
\begin{align*}
    (T\chi_E)^*\left(t\right) \lesssim \left( \frac{|E|}t \right)^{\frac 1{q_1}}\chi_{\left(|E|, \infty\right)}(t) + \varphi\left(1 + \log \frac{|E|}t \right)\left( \frac{|E|}t \right)^{\frac 1{q_2}}\chi_{\left(0, |E|\right)}(t).
\end{align*}
Then, by Lemma~\ref{comparables}, \begin{align*}
     (T\chi_E)^*\left(t\right) & \lesssim S_{q_1,q_2,\varphi}(\chi_E)^*(t).
\end{align*} 

The extension to positive simple functions with support in a set of finite measure follows the same lines as the proof of the necessity of Theorem~\ref{Teo-1} with few modifications. First of all, we consider a positive simple function like the one in \eqref{fsimplefunc}. Hence, as in \eqref{Tf**estimationTeo2}, using what we have already proved for characteristic functions together with the sublinearity of $T$ and the equivalence $S_{q_1,q_2,\varphi}(f^*)(t)^{**} \approx S_{q_1,q_2,\varphi}(f^{**})(t)$ (see Lemma~\ref{GeneralizedCalderonProperty}), we obtain that
\begin{equation}\label{T vs S_q_1,q_2 doble star}
(Tf)^{**}(t)\lesssim S_{q_1,q_2,\varphi}(f^{**})(t).
\end{equation}

So fix $t>0$ and take the functions $g$ and $h$ from $f$ defined in \eqref{g_h_def}. Since the weight $\w(r) = r^{\frac 1{q_1} - 1}$ is in $B_{\frac 1{q_1}}^\cR\cap B^*_{q_2}$, the corresponding weak inequality leads to
$$
(Tg)^*(t/2)\lesssim \frac 1{t^{\frac 1{q_1}}}\int_0^t (f^*(s)-f^*(t))\,\frac{ds}{s^{1 - \frac 1{q_1}}} \approx \frac1{t^{\frac 1{q_1}}}\int_0^t f^*(s)\,\frac{ds}{s^{1 - \frac 1{q_1}}} - f^*(t).
$$  
On the other hand, using \eqref{T vs S_q_1,q_2 doble star} for $h$ instead of $f$ we get
\begin{align*}
(Th)^{**}(t) &\lesssim S_{q_1,q_2, \varphi}(h^{**})(t) = P_{q_1}(h^{**})(t) + Q_{q_2, \varphi}(h^{**})(t).
\end{align*}

\noindent First, since $h^*(r) = f^*(t)$ for every $r \in [0,t]$, $$P_{q_1}(h^{**})(t) \approx f^*(t).$$

\noindent Besides, arguing as we did to bound \eqref{Qinftyvarphih^**} with the auxiliary function $$\widetilde\varphi(x) = \left\{\begin{array}{lc}
    \displaystyle \fii(x)e^{\frac{x-1}{q_2}}, & 1 < q_2 < \infty, \vspace{3mm} \\
    \displaystyle \frac{\fii(x)}x,\, & q_2 = \infty, 
\end{array}\right.$$ then we deduce that $$Q_{q_2, \varphi}(h^{**})(t) \lesssim f^*(t) + Q_{q_2, \varphi}(f^*)(t),$$ 
\noindent and the  result follows. 

\medskip

Conversely, assume that $(Tf)^*(t) \lesssim S_{q_1,q_2,\varphi}(f^*)(t)$ for every $t > 0$. If $q_2<\infty$ then the operator $S_{q_1,q_2,\varphi}$ has the form
$$
S_{q_1,q_2,\varphi}(f^*)(t) =\int_0^\infty k(t,s) f^*(s)ds,
$$
where the kernel is
$$
k(t, s)= \frac{1}{t^{\frac{1}{q_1}}} \chi_{[0,t)}(s)  \frac{1}{s^{1-\frac{1}{q_1}}}+ \fii \left(1+\log \frac s t \right) \left(\frac{s}{t}\right)^{\frac{1}{q_2}} \chi_{[t,\infty)} (s) \frac{1}{s}.
$$
\medskip

\noi By Theorem 3.3 in \cite{cs:cs}, the norm the norm $\norm{S_{q_1,q_2,\varphi}}_{\Lambda^1(w)\rightarrow\Lambda^{1,\infty}(w)}$ can be estimated by 
$$
A_k:=\sup_{t>0} \left( \sup_{r>0} \left( \int_0^r k(t,s) ds\right) W(r)^{-1} \right) W(t).
$$
Now if $0 < r<t$ then we have 
$$
\int_0^r k(t,s) \,ds= \left(\frac rt\right)^{\frac{1}{q_1}},
$$
while if $r>t$, then by Lemma \ref{comparables} we obtain
$$
\int_0^r k(t,s)\, ds \approx  \fii \left(1+\log \frac r t \right)  \left(\frac{r}{t}\right)^{\frac{1}{q_2}}. 
$$
In consequence, we have that
$$
A_k\approx \sup_{t>0} \max \left \{     \left( \sup_{0 < r<t} \left(\frac{r}{t}\right)^{\frac{1}{q_1}} \frac{W(t)}{W(r)}\right) , \left( \sup_{r>t}  
 \fii \left(1+\log \frac r t \right)  \left(\frac{r}{t}\right)^{\frac{1}{q_2}}\frac{W(t)}{W(r)}\right)  \right\}.
$$
Since
$$
 \sup_{0 <r<t} \left(\frac{r}{t}\right)^{\frac{1}{q_1}} \frac{W(t)}{W(r)} \leq [w]_{B_{\frac{1}{q_1}}^{\cR}}^{\frac{1}{q_1}},
$$ 
we get that
$$
A_k \lesssim  \max \left\{ [w]_{B_1^{\cR}}^{\frac{1}{q_1}}\ ,\  \sup_{t>0} \sup_{r>t} \left\{   \fii \left(1+\log \frac r t \right)  \left(\frac{r}{t}\right)^{\frac{1}{q_2}}\frac{W(t)}{W(r)} \right\}\right\}.
$$
Hence, if $\la=t/r < 1$, then by Lemma \ref{better decay q}
$$
\left(\frac{r}{t}\right)^{\frac{1}{q_2}} \frac{W(t)}{W(r)}=\la^{-\frac{1}{q_2}}\frac{W(\la r)}{W(r)}\leq 4q\, \bsq{\w}{q_2} \la^{\frac 1{4q_2\bsq{\w}{q_2}}}.
$$
Therefore
\begin{align*}
\sup_{t>0} \sup_{r>t} \left\{   \fii \left(1+\log \frac r t \right)  \left(\frac{r}{t}\right)^{\frac{1}{q_2}}\frac{W(t)}{W(r)} \right\}
&\lesssim  \bsq{\w}{q_2} \sup_{\lambda<1} \left\{  \fii  \left(1+\log  \frac{1}{\lambda} \right)    \la^{\frac 1{4q_2\bsq{\w}{q_2}}} \right\} \\
&\lesssim  \bsq{\w}{q_2} \sup_{x>1} \left\{  \fii (x) e^{-x/(4q_2\bsq{\w}{q_2})} \right\}. 
\end{align*}

Finally, by Lemma \ref{L sup log log} and the inequality  \eqref{por la logconcavidad2} we obtain that 

\begin{align*}
\sup_{t>0} \sup_{r>t} \left\{   \fii \left(1+\log \frac r t \right)  \left(\frac{r}{t}\right)^{\frac{1}{q_2}}\frac{W(t)}{W(r)} \right\}
&\lesssim \bsq{\w}{q_2} \fii \big(4q_2\bsq{\w}{q_2}\big)\\&\lesssim \bsq{\w}{q_2} \fii \big([w]_{B^*_{q_2}}\big).
\end{align*}
Combining both estimates we get
$$
A_k \lesssim  \max \Big\{ [w]_{B_1^{\cR}}^{\frac{1}{q_1}}\,,\,\bsq{\w}{q_2} \fii \big([w]_{B^*_{q_2}}\big) \Big\},
$$
which leads to the desired result. If $q_2=\infty$, the proof is a combination of the proof for the case $q_2<\infty$ and the proof of the sufficiency in Theorem \ref{Teo-1}.
\end{proof}

We are finally ready to prove our last main result:

\begin{proof}[Proof of Theorem \ref{Teo-main2-bis}]
Observe that $\overline{\varphi}$ as in \eqref{overlinevarphi} satisfies the same properties as $\varphi$. Therefore, the proof follows by direct application of Theorems \ref{main2} and \ref{Teo-2}. 
\end{proof}

\section{Examples and applications}\label{operators_bound}

We shall present several examples of very interesting operators in Harmonic analysis for which our results give a pointwise estimate of the decreasing rearrangement.

\subsection{Fefferman-Stein inequality}

An operator $T$ is said to satisfy a Fefferman-Stein's inequality (\cite{fs:fs}) if, for every positive and locally integrable function $u$, 
\begin{equation}\label{ffss}
\int_{\{ |Tf(x)|>y\}} u(x)dx \lesssim \int |f(x)| Mu(x) dx.
\end{equation}

Clearly, for every operator satisfying \eqref{ffss} we have that
$$
T:L^1(u) \longrightarrow L^{1,\infty}(u) 
$$
is bounded with norm less than or equal to  $C||u||_{A_1}$ and hence, as a consequence of Theorem \ref{Teo-main1-bis}, we get the following:

\begin{corollary}
For every $t > 0$ and every measurable function $f$,
$$
(Tf)^*(t) \lesssim  \frac 1t \int_0^t f^*(s) \, ds + \int_t^\infty   f^*(s) \, \frac{ds}s. 
$$
\end{corollary}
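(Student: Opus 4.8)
The plan is to read this estimate off directly from Theorem \ref{Teo-main1-bis}, applied to the particular admissible function $\varphi(x)=x$.

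First I would record that the hypothesis of Theorem \ref{Teo-main1-bis} is satisfied with this $\varphi$. As observed just above the statement, any $T$ obeying the Fefferman--Stein inequality \eqref{ffss} maps $L^1(u)$ into $L^{1,\infty}(u)$ with norm at most $C\|u\|_{A_1}$ for every $u\in A_1$; this is immediate from \eqref{ffss} together with the pointwise bound $Mu\le \|u\|_{A_1}\,u$ that defines the class $A_1$ (see \eqref{mmhh}). Hence $T$ meets the assumption of Theorem \ref{Teo-main1-bis} with $\varphi(x)=x$.

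Second, I would check that $\varphi(x)=x$ is admissible in the sense of Definition \ref{admissible}: one has $\varphi(1)=1$, the function $\log\varphi(x)=\log x$ is concave so condition a) holds, and $\varphi'(x)/\varphi(x)=1/x$, so \eqref{la espada y la pared} holds with $\gamma=\beta=1$. Equivalently, $\varphi$ is the function $x^{\gamma}(1+\log x)^{\beta}$ with $\gamma=1$ and $\beta=0$, already listed among the admissible examples.

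Finally, I would substitute $\varphi(x)=x$ into the conclusion \eqref{dec} of Theorem \ref{Teo-main1-bis}: in the second integral the weight $\bigl(1+\log\frac st\bigr)^{-1}$ exactly cancels the factor $\varphi\bigl(1+\log\frac st\bigr)=1+\log\frac st$, leaving
$$
(Tf)^*(t)\lesssim \frac1t\int_0^t f^*(s)\,ds+\int_t^\infty f^*(s)\,\frac{ds}{s},
$$
which is the asserted estimate. I do not expect any genuine obstacle: the only points requiring a word of justification are the (standard) deduction of the weighted weak-type bound from \eqref{ffss} and the admissibility check for the identity function, after which Theorem \ref{Teo-main1-bis} does all the work; the cancellation in the last display is precisely what makes the logarithmic gain disappear in this borderline case.
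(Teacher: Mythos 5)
Your proposal is correct and follows exactly the paper's route: the authors likewise note that \eqref{ffss} combined with the defining inequality $Mu\lesssim \|u\|_{A_1}u$ yields $T:L^1(u)\to L^{1,\infty}(u)$ with constant $C\|u\|_{A_1}$, and then invoke Theorem \ref{Teo-main1-bis} with $\varphi(x)=x$ (equivalently Theorem \ref{Teo-main1} with $k=1$), where the factor $\bigl(1+\log\frac st\bigr)^{-1}\varphi\bigl(1+\log\frac st\bigr)$ collapses to $1$. Your explicit admissibility check for $\varphi(x)=x$ is a detail the paper leaves implicit, but there is no difference in substance.
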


This is the case (among many others operators) of  the area function \cite{chw:chw} defined by
$$
Sf(x)=\left(\int_{|x-y|\le t} |\nabla_{y,t}  (f* P_t)(y)|^2\right)^{1/2}, 
$$
where
$$
\nabla_{y,t} =\left(\frac{\partial}{\partial y_1}, \frac{\partial}{\partial y_2}, \cdots, \frac{\partial}{\partial y_n}, \frac{\partial}{\partial t}\right), \qquad P_t(y)=\frac{c_nt}{(t^2+|y|^2)^{(n+1)/2}}. 
$$

\medskip

\subsection{Bochner-Riesz and Rough singular operators} To begin with, we  recall the definition of these operators. Then, we present a known 
quantitative inequality, which leads to pointwise estimations for the decreasing rearrangement of these operators by using our results.
 
\subsection*{Bochner-Riesz operators}
Let $$\hat{f}(\xi) = \int_{\mathbb R^n}f(x)e^{-2\pi i x\cdot \xi}\, dx, \hspace{5mm} \xi \in \mathbb R^n,$$ 
be the Fourier transform of  $f \in L^1\left( \mathbb R^n \right)$. Let $a_{+} = \max\{a,0\}$ denote the positive part of $a \in \mathbb R$. Given $\lambda > 0$,  the Bochner-Riesz operator $B_{\lambda}$ is defined by
 $$
 \widehat{B_{\lambda}f}(\xi) = \left( 1 - |\xi|^2 \right)^{\lambda}_{+}\hat{f}(\xi), \qquad \xi \in \mathbb R^n.
 $$

These operators were first introduced by Bochner \cite{b:b} and, since then, they have been widely studied. The case $\lambda = 0$ corresponds to the so-called disc multiplier, which is unbounded on $L^p(\mathbb R^n)$ if $n \geq 2$ and $p \ne 2$ \cite{f:f}. When $\lambda > \frac{n - 1}{2}$, it is known that $B_\lambda f$ is controlled by the Hardy-Littlewood maximal function $Mf$. As a consequence, all  weighted inequalities for $M$ are also satisfied  by $B_{\lambda}$. The value $\lambda = \frac{n - 1}{2}$ is called the critical index. In this case, Christ \cite{c:c} showed that $B_{\frac{n-1}2}$ is bounded from $L^1(\mathbb R^n)$ to $L^{1,\infty}(\mathbb R^n)$, and Vargas \cite{v:v} proved that $B_{\frac{n-1}2}$ is bounded from $L^1(u)$ to $L^{1,\infty}(u)$ for every $u \in A_1$.  
%Later, in \cite{lprr:lprr} the authors got to improve the bounding weight constant as follows:

\subsection*{Rough singular integrals}For $n > 1$, set $\Sigma^{n-1} = \left\{ x \in \mathbb{R}^n \, : \, |x| = 1 \right\}$ and let $\Omega$ be an homogeneous function of degree zero such that 
\begin{equation} \label{simm}
\int_{\Sigma^{n-1}}\Omega(x)\,dx = 0. 
\end{equation}
The rough singular integral operator is defined by  
 $$
 T_{\Omega}f(x) = \text{p.v.} \int_{\mathbb R^n}\frac{\Omega(y')}{|y|^n}f(x-y)\,dy, \qquad x \in \mathbb R^n,
 $$ 

\noindent with $y' = \frac{y}{|y|}$. This operator was first introduced by Calder\'on and Zygmund who proved that \cite{cz:cz, cz:cz2} $T_\Omega$ is bounded on $L^p$ if the even part of $\Omega$ belongs to $L \log_+ L$ and its odd part belongs to $L^1$. Since then, this operator has been widely studied \cite{c:c, cr:cr, v:v}. When $\Omega \in L^\infty$, Duoandikoetxea and Rubio de Francia \cite{JDJLRF} proved that, for $1 < p < \infty$, $$T_{\Omega}:L^p(u)\rightarrow L^p(u)$$ is bounded whenever $u \in A_p$, later improved in \cite{JD3}, \cite{DKW} and  \cite{lprr:lprr}.

\subsection*{Quantitative results and the pointwise estimation}

In \cite{lprr:lprr} the authors obtained the following quantitative result.

\begin{theorem} \label{perezetal}%\cite{lprr:lprr}
Let $T$ be either the Bochner-Riesz operator $B_{\frac{n-1}2}$, where $n > 1$ or the rough singular integral $T_\Omega$, where $\Omega$ is in $ L^{\infty}(\Sigma^{n-1})$ and satisfies \eqref{simm}.
Then, 
$$
T: L^2(u)\rightarrow L^{2,\infty}(u)
$$ 
is bounded with constant less than or equal to $C||u||^2_{A_2}$.
\end{theorem}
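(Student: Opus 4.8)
The plan is to deduce the weighted weak-type $L^2(u)\to L^{2,\infty}(u)$ bound from an $L^2$ sparse-domination estimate for the operators $B_{\frac{n-1}2}$ and $T_\Omega$. Specifically, the starting point is that for each such $T$ there is a sparse operator $\mathcal A_{\mathcal S}$ (indexed over a sparse family $\mathcal S$ of cubes) and a form bound of the type
$$
\left| \int (Tf)\, g\, u \right| \lesssim \sum_{Q\in\mathcal S} \langle f\rangle_{1,Q}\, \langle g\, u\rangle_{1,Q}\, |Q|,
$$
valid for all nice $f,g$; for the rough singular integral this is the Conde-Alonso--Culiuc--Di Plinio--Ou / Lerner sparse bound, and for the Bochner-Riesz operator at the critical index one uses the analogous sparse bound (or its $L^2$-localized variant with averages $\langle f\rangle_{2,Q}$). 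Since the passage is from a sparse/form bound to a \emph{quantitative} $A_2$ estimate, the key is to track the dependence on $[u]_{A_2}$ carefully.

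First I would reduce everything to proving the quantitative bound
$$
\|\mathcal A_{\mathcal S} f\|_{L^{2,\infty}(u)} \lesssim [u]_{A_2}^{\,2}\,\|f\|_{L^2(u)}
$$
for an arbitrary sparse family $\mathcal S$, with constant independent of $\mathcal S$. Here one should use the $L^r$-averaged sparse operators $\mathcal A_{\mathcal S}^{(r)} f = \sum_{Q\in\mathcal S} \langle f\rangle_{r,Q}\,\chi_Q$ with $r$ slightly bigger than $1$ (for $T_\Omega$, any $r>1$ works; for $B_{\frac{n-1}2}$ one takes the admissible exponent coming from its sparse bound). The standard strong-type estimate for such operators is $\|\mathcal A_{\mathcal S}^{(r)}\|_{L^p(u)\to L^p(u)} \lesssim [u]_{A_{p/r}}^{\max(1,\,(p/r)'/p)}$ for $p>r$; at $p=2$ with $r$ close to $1$ this already yields something like $[u]_{A_2}^{1+\varepsilon}$, but to reach the endpoint $r=1$ one passes to the weak-type $(2,2)$ inequality, where the sharp exponent is exactly $2$. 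I would establish this via the Calder\'on--Zygmund decomposition of $f$ adapted to the level set, splitting into a good part (handled by the strong $L^2(u)$ bound) and a bad part (handled by a weak-type argument on the exceptional set), carefully collecting powers of $[u]_{A_2}$ from the $A_2$ and reverse-H\"older constants at each step.

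Once the abstract sparse estimate $\|\mathcal A_{\mathcal S}^{(r)}\|_{L^2(u)\to L^{2,\infty}(u)} \lesssim [u]_{A_2}^2$ is in hand, the proof of Theorem \ref{perezetal} is immediate: dualize the form bound to get the pointwise/sparse control $(Tf)$ by $\sup_{\mathcal S}\mathcal A_{\mathcal S}^{(r)}f$ (or directly test the form against $g\in L^{2}(u)$ with $\|g\|_{L^2(u)}\le 1$ supported on a level set of $Tf$), and apply the sparse weak-type bound together with H\"older and the $A_2$ condition to absorb the factor $\langle g u\rangle_{1,Q}$. For $B_{\frac{n-1}2}$ one additionally invokes Vargas' result that it is bounded $L^1(u)\to L^{1,\infty}(u)$ for $u\in A_1$ only if one wants the $p=1$ endpoint — but for the stated $p=2$ assertion, the sparse bound suffices.

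\textbf{Main obstacle.} The crux is not the sparse domination itself (which is quoted from the literature) but extracting the \emph{sharp} power $[u]_{A_2}^2$ in the weak-type sparse estimate at the endpoint exponent $r=1$: the naive iteration of strong-type bounds for $L^r$-averaged sparse operators gives $[u]_{A_2}^{1+\varepsilon}$ with $\varepsilon\to 0$ as $r\to 1$ but with the implicit constant blowing up, so one genuinely needs the Calder\'on--Zygmund-decomposition/weak-type argument to see that the exponent stabilizes at $2$. Keeping the reverse-H\"older exponent — which itself depends on $[u]_{A_2}$ — from degrading the final power is the delicate bookkeeping step.
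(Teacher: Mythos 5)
First, a point of reference: the paper does not prove this statement at all. Theorem \ref{perezetal} is quoted verbatim from the literature --- the text introduces it with ``In \cite{lprr:lprr} the authors obtained the following quantitative result'' --- so there is no in-paper argument to compare yours against. What the paper actually needs from this theorem is only the input to Theorem \ref{fip2}, and the cited reference in fact proves the \emph{strong} bound $\|T\|_{L^2(u)\to L^2(u)}\lesssim \|u\|_{A_2}^2$, of which the weak-type statement is an immediate consequence.

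As a standalone proof attempt, your proposal is a reasonable outline of the known argument but it is not a proof: the step that carries all the content --- extracting the exponent $2$ on $[u]_{A_2}$ from the sparse form bound --- is exactly the step you defer and label the ``main obstacle.'' Moreover, the mechanism you propose for it (a Calder\'on--Zygmund decomposition at the level of the weak-type inequality to ``reach the endpoint $r=1$'') is not how the constant is actually obtained, and it is not clear it would work. In \cite{lprr:lprr} the sparse domination for $T_\Omega$ (and for $B_{\frac{n-1}{2}}$) is a bilinear form bound $\sum_{Q}\langle f\rangle_{1,Q}\langle g\rangle_{r,Q}|Q|$ whose constant blows up like $r'$ as $r\to 1$; one then proves a weighted bound for this form that is linear in $[u]_{A_2}$ for fixed $r$, and the exponent $2$ emerges by choosing $r$ so that $r'\approx [u]_{A_2}$, i.e.\ by playing the degenerating sparse constant against the reverse H\"older exponent of $u\in A_2$. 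There is no passage to $r=1$ and no weak-type Calder\'on--Zygmund argument; indeed the result one gets this way is the strong $(2,2)$ bound. So your sketch identifies the right ingredients (sparse domination with $L^r$ averages, reverse H\"older bookkeeping) but the route you propose for the decisive quantitative step is both unexecuted and, as described, pointed in the wrong direction; to make this rigorous you should instead quote or reproduce the optimization-in-$r$ argument of \cite{lprr:lprr}.
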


Hence, using Theorem \ref{fip2},  we have the following: 

\begin{corollary}
Consider the hypothesis of Theorem \ref{perezetal}. Then,  for every $t > 0$ and every  function $f$ such that $Tf$ is well defined, we have that 
$$
\left(T f\right)^*(t) \lesssim \frac 1t\int_0^tf^*(s)\,ds + \int_t^\infty \parenthesis{ 1 + \log \frac st }f^*(s)\,\frac{ds}s.
$$
\end{corollary}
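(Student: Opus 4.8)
The plan is to apply Theorem~\ref{fip2} directly, so the proof reduces to checking that its hypotheses hold with $p_0=2$ and $\varphi(x)=x^2$. First I would note that $\varphi(x)=x^2$ is admissible: it is the function of Example~(i) with $\gamma=2$ and $\beta=0$, hence $\varphi(1)=1$, $\varphi$ is log-concave, and $\varphi'(x)/\varphi(x)=2/x$ satisfies \eqref{la espada y la pared}. In particular $\varphi$ meets Definition~\ref{admissible}.

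Next I would supply the two boundedness inputs required by Theorem~\ref{fip2}. The weighted bound is exactly Theorem~\ref{perezetal}: for $T=B_{\frac{n-1}2}$ with $n>1$, or $T=T_\Omega$ with $\Omega\in L^\infty(\Sigma^{n-1})$ satisfying \eqref{simm}, one has $T\colon L^2(u)\to L^{2,\infty}(u)$ with constant at most $C\|u\|_{A_2}^2=C\varphi(\|u\|_{A_2})$, which is the hypothesis of Theorem~\ref{fip2} at $p_0=2$. The unweighted weak $(1,1)$ bound $T\colon L^1\to L^{1,\infty}$ is the only genuinely external ingredient: for the Bochner--Riesz operator at the critical index it is due to Christ \cite{c:c}, and for the rough singular integral with bounded angular part it is the classical weak-type $(1,1)$ estimate (see \cite{c:c,cr:cr}, using that $L^\infty(\Sigma^{n-1})\subset L\log_+L(\Sigma^{n-1})$ since the sphere has finite measure).

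With these two facts in hand, Theorem~\ref{fip2} yields that \eqref{dec} holds with $\varphi(x)=x^2$, namely
\[
(Tf)^*(t)\lesssim \frac1t\int_0^t f^*(s)\,ds+\int_t^\infty\left(1+\log\frac st\right)^{-1}\left(1+\log\frac st\right)^{2}f^*(s)\,\frac{ds}s .
\]
Simplifying $\left(1+\log\frac st\right)^{-1}\left(1+\log\frac st\right)^{2}=1+\log\frac st$ in the second integrand gives precisely the asserted estimate, and the proof is complete. There is no substantial obstacle here; the only step that is not pure bookkeeping is quoting the weak-type $(1,1)$ boundedness of $T$, and the only things worth double-checking are that $\varphi(x)=x^2$ really satisfies Definition~\ref{admissible} and that the constant $\|u\|_{A_2}^2$ in Theorem~\ref{perezetal} matches $\varphi(\|u\|_{A_2})$.
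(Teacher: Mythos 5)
Your proof is correct and follows exactly the paper's route: the paper likewise deduces the corollary by feeding Theorem \ref{perezetal} into Theorem \ref{fip2} with $\varphi(x)=x^2$ and simplifying $\left(1+\log\frac st\right)^{-1}\left(1+\log\frac st\right)^2=1+\log\frac st$. You are in fact slightly more careful than the paper, which does not explicitly record the unweighted weak $(1,1)$ input required by Theorem \ref{fip2} nor the trivial embedding $L^{2,1}(u)\hookrightarrow L^{2}(u)$ needed to match the $L^{2}(u)\to L^{2,\infty}(u)$ bound of Theorem \ref{perezetal} to the $L^{2,1}(u)\to L^{2,\infty}(u)$ hypothesis.
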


\medskip
 
\subsection{Sparse Operators} These operators have become very popular due to its role in the so called  $A_2$ conjecture consisting in proving that if $T$ is a Calder\'on-Zygmund operator then
$$
||Tf||_{L^2(v)} \lesssim ||v||_{A_2}||f||_{L^2(v)}. 
$$
This result was first obtained by Hyt\"onen \cite{h:h} and then simplified by Lerner \cite{l:l, l:l2}, who proved that the norm of a Calder\'on-Zygmund operator in a Banach function space $\mathbb X$ is dominated by the supremum of the norm  in  $\mathbb X$ of all the possible sparse operators and then proved that every sparse operator is bounded on $L^2(v)$ for every weight $v \in A_2$ with sharp constant. Let us give the precise definition. A general dyadic grid $\mathcal D$ is a collection of cubes in $\mathbb R^n$ satisfying the following properties:

\begin{enumerate}
    \item[(i)] For any cube $Q \in \mathcal D$, its side length is $2^k$ for some $k \in \mathbb Z$.
    
    \item[(ii)] Every two cubes in $\mathcal D$ are either disjoint or one is wholly contained in the other.
    
    \item[(iii)] For every $k \in \mathbb Z$ and given $x \in \mathbb R^n$, there is only one cube in $\mathcal D$ of side length $2^k$ containing it.
\end{enumerate}

\noindent Let $0 < \eta < 1$, a collection of cubes $\mathcal{S}  \subset \mathcal D$ is called $\eta$-sparse if one can choose pairwise disjoint measurable sets $E_Q \subset Q$ with $|E_Q| \geq \eta |Q|$, where $Q\in\mathcal{S}$. So, given a $\eta$-sparse family of cubes $\mathcal{S}$, the sparse operator is defined by 
$$\mathcal{A}_Sf(x) = \sum_{Q\in \mathcal{S}}\left(\frac{1}{|Q|}\int_Qf(y)\,dy\right) \chi_Q(x), \qquad x \in \mathbb R^n.$$

Even though, weighted estimates for these operators are known, one can easily compute the norm in $L^p$ directly, using the standard duality technique: for every, $p\ge 1$ and $g\in L^{p',1 }$ with norm equal to 1, 
\begin{eqnarray*}
& &\int \mathcal{A}_\mathcal Sf(x)  g(x)dx = \sum_{Q\in \mathcal{S}}\left(\frac{1}{|Q|}\int_Qf(y)\,dy\right) \int_Q g(x)dx 
\\
&\leq& \frac 1\eta
\sum_{Q\in \mathcal{S}}\left(\frac{1}{|Q|}\int_Qf(y)\,dy\right) \frac 1{|Q|}\int_Q g(x)dx |E_Q| \le \frac 1\eta \int_{\mathbb R^n} Mf(x) Mg(x) dx
\\
&\le& \frac 1\eta ||Mf||_{L^{p, \infty}} ||Mg||_{L^{p', 1}} \le \frac{C_n}\eta \left(\frac{p'}{p'-1}\right) ||f||_{L^{p,1}}||g||_{L^{p',1 }} \approx p  ||f||_{L^{p,1}}. 
\end{eqnarray*}
Therefore, as a consequence of Corollary \ref{fip}, we get the following:

\begin{corollary}
For every $t > 0$ and every measurable function $f$,
$$
\left( \mathcal{A}_{\mathcal S}f \right)^*(t) \lesssim \frac 1t\int_0^tf^*(s)\,ds + \int_t^\infty f^*(s)\,\frac{ds}s.
$$
\end{corollary}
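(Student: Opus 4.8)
The plan is to reduce the corollary to an application of Corollary~\ref{fip}, which requires showing that the sparse operator $\mathcal{A}_{\mathcal S}$ satisfies the restricted weak type bound $\|\mathcal{A}_{\mathcal S}f\|_{L^{p,\infty}} \le C\varphi(p)\|f\|_{L^{p,1}}$ for all $1 \le p < \infty$, with an admissible function $\varphi$. In fact, the display computation just above the corollary statement already carries this out: by pairing $\mathcal{A}_{\mathcal S}f$ against a normalized $g \in L^{p',1}$, using the disjointness of the sets $E_Q$ and the estimate $|E_Q| \ge \eta|Q|$, one dominates the bilinear form by $\eta^{-1}\int Mf\cdot Mg$, and then by H\"older's inequality in Lorentz spaces together with the boundedness of $M$ on $L^{p,\infty}$ and on $L^{p',1}$ (with the well-known $L^{p',1}\to L^{p',1}$ norm of $M$ being $O(p'/(p'-1)) = O(p)$ as $p \to \infty$). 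This yields $\|\mathcal{A}_{\mathcal S}f\|_{L^{p,\infty}} \lesssim p\,\|f\|_{L^{p,1}}$ by duality, i.e. one may take $\varphi(p) = p$.

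The second step is to check that $\varphi(x) = x$ is an admissible function in the sense of Definition~\ref{admissible}: indeed $\varphi(1) = 1$, $\log\varphi(x) = \log x$ is concave so the log-concavity condition holds, and $\varphi'(x)/\varphi(x) = 1/x$, so \eqref{la espada y la pared} holds with $\gamma = \beta = 1$. Hence the hypotheses of Corollary~\ref{fip} are met with this $\varphi$.

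The final step is to invoke Corollary~\ref{fip} and simplify. Since $\varphi(x) = x$, the operator $Q_{\infty,\varphi}$ in \eqref{dec} has the factor $\left(1 + \log\frac st\right)^{-1}\varphi\left(1 + \log\frac st\right) = 1$, so \eqref{dec} collapses exactly to
$$
(\mathcal{A}_{\mathcal S}f)^*(t) \lesssim \frac 1t\int_0^tf^*(s)\,ds + \int_t^\infty f^*(s)\,\frac{ds}s,
$$
which is the claimed estimate. There is essentially no obstacle here: the only point requiring a little care is the sharp growth rate of the $L^{p',1}\to L^{p',1}$ operator norm of $M$, but this is classical (it follows from interpolating the endpoint bounds, or directly from the weak $(1,1)$ and strong $(\infty,\infty)$ estimates via the Lorentz-space form of Marcinkiewicz interpolation), and in any case only a bound of the form $\varphi(p)$ for \emph{some} admissible $\varphi$ is needed for \eqref{dec} to hold — the linear rate merely gives the cleanest conclusion.
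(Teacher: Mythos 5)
Your proposal follows exactly the paper's route: the duality computation displayed just before the corollary gives $\|\mathcal{A}_{\mathcal S}f\|_{L^{p,\infty}}\lesssim p\,\|f\|_{L^{p,1}}$, and the corollary is then an immediate application of Corollary~\ref{fip} with the admissible function $\varphi(x)=x$, for which the logarithmic factor in \eqref{dec} cancels. The extra details you supply (admissibility of $\varphi(x)=x$ and the $O(p)$ growth of the $L^{p',1}$ norm of $M$) are correct and consistent with what the paper leaves implicit.
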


\medskip

\subsection{Fourier multipliers}

Given $m \in L^\infty(\mathbb R^n)$, we say that $T_m$ is a Fourier multiplier if, for every Schwartz function $f$, $$\widehat{T_mf}(\xi) = m(\xi) f(\xi),\qquad \xi \in \mathbb R^n,$$ and $m$ is called a multiplier. It has been of great interest to identify, when possible, for which maximal operators $\mathcal M$ the operator $T_m$ satisfies a Fefferman-Stein's type inequality in $L^2$ of the form $$\int_{\mathbb R^n} |T_mf(x)|^2u(x) \,dx \leq \int_{\mathbb R^n} |f(x)|^2\mathcal M u(x)\,dx,$$ for measurable functions $f$ and positive locally integrable functions $u$ (see for instance \cite{bJ:bJ, c:c2, cA:cA, cf:cf, sE:sE}).

In particular, we present the following interesting case. 

\begin{proposition}[\cite{bJ:bJ}]

\noindent If $m:\mathbb R \rightarrow \mathbb C$ is a bounded function so that
 \begin{equation}\label{variationondyadicint}
    \sup_{R > 0} \int_{R \leq |\xi| \leq 2R} |m'(\xi)|\,d\xi < \infty, 
\end{equation} 
then, for every measurable function $f$ and every positive locally integrable function  $u$, $$\int_\mathbb R |T_mf(x)|^2u(x)\,dx \leq C \int_\mathbb R |f(x)|^2M^7u(x)\,dx, $$ where $M^7 = M \underbrace{\circ \dots \circ}_7 M$ is the 7-fold composition of $M$ with itself.

\end{proposition}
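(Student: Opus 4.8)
The plan is to combine a Littlewood--Paley decomposition with a single-scale estimate, reducing the latter, via the structure of functions of bounded variation, to the known Fefferman--Stein inequality for the Hilbert transform; it suffices to argue for $f$ in the Schwartz class. Fix a smooth partition of unity $\sum_{j\in\mathbb{Z}}\phi_j\equiv 1$ on $\mathbb{R}\setminus\{0\}$ with $\phi_j(\xi)=\phi(2^{-j}\xi)$ and $\operatorname{supp}\phi\subset\{\tfrac12\le|\xi|\le 2\}$, and set $m_j=m\phi_j$, so that $T_m=\sum_jT_{m_j}$. Writing $m_j^{\pm}$ for the restriction of $m_j$ to $\{\pm\xi>0\}$ and using $(m_j)'=m'\phi_j+m\phi_j'$, the hypothesis \eqref{variationondyadicint} together with $m\in L^\infty$ gives $\operatorname{Var}(m_j^{\pm})\le C$ with $C$ independent of $j$. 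Picking also a fattened bump $\tilde\phi_j$, equal to $1$ on $\operatorname{supp}\phi_j$ and supported in a slightly larger annulus, with associated projection $\Delta_j$, we have $T_{m_j}=T_{m_j}\Delta_j$, so that the pieces $h_j:=T_{m_j}\Delta_jf$ have Fourier support in lacunary annuli.

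Since $m_j^{+}$ is of bounded variation and supported away from the origin, $m_j^{+}(\xi)=\int\mathbf{1}_{[t,\infty)}(\xi)\,dm_j^{+}(t)$ with total mass $\operatorname{Var}(m_j^{+})\lesssim 1$, so that $T_{m_j^{+}}g=\int P_{[t,\infty)}g\,dm_j^{+}(t)$, where $P_{[t,\infty)}$ denotes the Fourier multiplier operator with symbol $\mathbf{1}_{[t,\infty)}$. This is a modulated Riesz projection, $P_{[t,\infty)}g=e^{2\pi i tx}\,\tfrac12(I+iH)\!\left(e^{-2\pi i tx}g\right)$; as modulation is an isometry on every $L^2(u)$ and, by P\'erez's Fefferman--Stein inequality for the Hilbert transform, $\int_{\mathbb{R}}|Hg|^2u\lesssim\int_{\mathbb{R}}|g|^2M^ku$ for a fixed $k$ (classically $k=3$), we get $\|P_{[t,\infty)}g\|_{L^2(u)}\lesssim\|g\|_{L^2(M^ku)}$ uniformly in $t$. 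Minkowski's integral inequality in $L^2(u)$, together with the analogous bound for $m_j^{-}$, then yields the uniform single-scale estimate
\[
\|T_{m_j}g\|_{L^2(u)}\lesssim\big(\operatorname{Var}(m_j^{+})+\operatorname{Var}(m_j^{-})\big)\,\|g\|_{L^2(M^ku)}\lesssim\|g\|_{L^2(M^ku)} .
\]

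Because the $h_j$ have Fourier supports in lacunary annuli, a standard weighted almost-orthogonality estimate gives $\int_{\mathbb{R}}\big|\sum_jh_j\big|^2u\lesssim\int_{\mathbb{R}}\big(\sum_j|h_j|^2\big)M^au$ for a fixed $a$ (the off-diagonal terms $\int h_j\overline{h_k}u$ being treated by localizing $u$ in frequency to the larger of the two scales, where it is dominated by $Mu$). Applying the single-scale estimate to each $\Delta_jf$ with weight $M^au$, summing in $j$, and invoking the vector-valued Fefferman--Stein bound $\int_{\mathbb{R}}(Sf)^2w\lesssim\int_{\mathbb{R}}|f|^2M^bw$ for the Littlewood--Paley square function $Sf=\big(\sum_j|\Delta_jf|^2\big)^{1/2}$, we obtain
\[
\int_{\mathbb{R}}|T_mf|^2u\lesssim\int_{\mathbb{R}}\sum_j|h_j|^2\,M^au\lesssim\int_{\mathbb{R}}\sum_j|\Delta_jf|^2\,M^{a+k}u=\int_{\mathbb{R}}(Sf)^2\,M^{a+k}u\lesssim\int_{\mathbb{R}}|f|^2\,M^{a+b+k}u,
\]
and the density argument finishes the proof, provided the three reductions are run with the sharp choices of $a,b,k$ with $a+b+k=7$.

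The conceptual heart of the argument is the single-scale step: recognizing that a single-scale bounded-variation symbol is a superposition of modulated Riesz projections, which reduces everything to the Fefferman--Stein inequality for $H$. The delicate point is the bookkeeping in the reassembly step: obtaining \emph{exactly} $M^7$, rather than a higher iterate of $M$, requires the sharp forms of the Fefferman--Stein inequalities (for $H$ and for the square function) at the exponent $p=2$ and an economical treatment of the off-diagonal terms in the weighted almost-orthogonality estimate.
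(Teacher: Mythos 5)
The paper does not prove this proposition: it is imported verbatim from Bennett \cite{bJ:bJ}, so there is no internal argument to compare yours with. Your overall strategy (Littlewood--Paley decomposition, single-scale reduction of a bounded-variation symbol to a superposition of modulated Riesz projections, Fefferman--Stein for $H$, then reassembly) is a sensible classical route, and the single-scale step is correct: $\operatorname{Var}(m_j^{\pm})\lesssim 1$ follows from \eqref{variationondyadicint} and $m\in L^\infty$, and Minkowski plus P\'erez's bound $\int |Hg|^2u\lesssim \int |g|^2M^3u$ gives $\|T_{m_j}g\|_{L^2(u)}\lesssim \|g\|_{L^2(M^3u)}$ uniformly in $j$.

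There are, however, two genuine gaps. First, the weighted almost-orthogonality step is not established by the argument you indicate. For $|j-k|\ge 5$ the product $h_j\overline{h_k}$ has Fourier support in an annulus of radius $\sim 2^{\max(j,k)}$, so indeed $\int h_j\overline{h_k}\,u=\int h_j\overline{h_k}\,\Pi_{\max(j,k)}u$ with $|\Pi_\ell u|\lesssim Mu$; but this yields $|\int h_j\overline{h_k}\,u|\lesssim \bigl(\int|h_j|^2Mu\bigr)^{1/2}\bigl(\int|h_k|^2Mu\bigr)^{1/2}$ with \emph{no decay in} $|j-k|$, and the off-diagonal sum is then $\bigl(\sum_j a_j\bigr)^2$, not $\sum_j a_j^2$. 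To prove $\int|\sum_jh_j|^2u\lesssim\int(\sum_j|h_j|^2)M^au$ one has to dualize against a square function (writing $\sum_j h_j=\sum_j\tilde\Delta_jh_j$, pairing with $\psi u$, and invoking a weighted bound for $\bigl(\sum_j|\tilde\Delta_j(\psi u)|^2\bigr)^{1/2}$ with a reciprocal weight), which is a different and more costly mechanism than the one you sketch. Second, the exponent $7$ is the entire quantitative content of the proposition -- it is what produces the power $(1+\log\frac st)^6$ in the paper's corollary -- and you explicitly leave the bookkeeping ``$a+b+k=7$'' unverified. A naive accumulation of the standard constants ($k=3$ for the Hilbert transform, plus two further square-function Fefferman--Stein estimates, each costing at least $M^2$) overshoots $7$, so matching the stated iterate is not a routine afterthought; it is precisely where Bennett's argument (which runs through sharp weighted $L^2$ bounds for $g$-functions and rough frequency projections) does its work. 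As written, your proof establishes at best $\int|T_mf|^2u\lesssim\int|f|^2M^Nu$ for some unspecified $N$, modulo the unproved almost-orthogonality lemma.
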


Hence, if $m \in L^\infty(\mathbb R)$ satisfies \eqref{variationondyadicint}, then 
$$
T_m:L^2(u) \longrightarrow L^2(u), \qquad C\norm{u}_{A_1}^7.
$$ 
As a consequence of Theorem~\ref{Teo-main2-bis}, we obtain the following:

\begin{corollary}
For every $t > 0$ and every measurable function $f$,
$$
\left( T_m f \right)^*(t) \lesssim \frac 1{t^\frac 12} \int_0^tf^*(s)\,\frac{ds}{t^\frac 12} + \int_t^\infty \left(1 + \log\frac st\right)^6 f^*(s)\,\frac{ds}s.
$$
\end{corollary}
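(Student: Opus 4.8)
The plan is to obtain this estimate as a direct instance of Theorem~\ref{Teo-main2-bis} with parameters $p_0=2$, $\alpha=1$ and admissible function $\varphi(x)=x^7$, the only analytic input being the Fefferman--Stein type inequality for $T_m$ recorded in the preceding Proposition. First I would convert that inequality, namely $\int_{\mathbb R}|T_mf(x)|^2u(x)\,dx\le C\int_{\mathbb R}|f(x)|^2M^7u(x)\,dx$ for every positive locally integrable $u$, into a quantitative weighted bound. If $u\in A_1$ then $Mu\le\|u\|_{A_1}u$ a.e., and since $M$ is monotone with respect to pointwise a.e.\ inequalities, a sevenfold iteration yields $M^7u\le\|u\|_{A_1}^7u$ a.e. Plugging this into the Fefferman--Stein inequality gives, as already noted in the remarks preceding this corollary, that for every $u\in A_1$ the operator $T_m\colon L^2(u)\to L^2(u)$ is bounded with norm controlled by $\|u\|_{A_1}^7$.

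Next I would put this in the form demanded by Theorem~\ref{Teo-main2-bis}. Using the elementary continuous inclusions $L^{2,1}(u)\hookrightarrow L^2(u)\hookrightarrow L^{2,\infty}(u)$, whose constants depend only on the fixed exponent $2$, the previous bound upgrades to
$$
T_m\colon L^{2,1}(u)\longrightarrow L^{2,\infty}(u),\qquad \|T_m\|\lesssim\|u\|_{A_1}^7=\varphi(\|u\|_{A_1}),\quad u\in A_1,
$$
with $\varphi(x)=x^7$. This is exactly the hypothesis of Theorem~\ref{Teo-main2-bis} taken with $p_0=2$ and $\alpha=1$ (so that $u^\alpha=u$), and $\varphi(x)=x^7$ is admissible in the sense of Definition~\ref{admissible}: indeed $\varphi(1)=1$, $\log\varphi$ is linear and hence concave, and $\varphi'(x)/\varphi(x)=7/x$, so \eqref{la espada y la pared} holds with $\gamma=\beta=7$ (alternatively, this is the case $\gamma=7$, $\beta=0$ of the family $x^\gamma(1+\log x)^\beta$).

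It remains only to read off the conclusion of Theorem~\ref{Teo-main2-bis} for these parameters. Since $\alpha=1$, the auxiliary function is $\tilde\varphi(x)=x^{-1}\varphi(x)=x^6$; and since $p_0=2$, one has $1/p_0=1/2$ and $(1-\alpha)/p_0=0$, so the first summand becomes $t^{-1/2}\int_0^tf(s)\,s^{-1/2}\,ds$ and the second becomes $\int_t^\infty\tilde\varphi\!\left(1+\log\frac st\right)f(s)\,\frac{ds}s$. Substituting $\tilde\varphi(x)=x^6$ yields precisely
$$
(T_mf)^*(t)\lesssim\frac1{t^{1/2}}\int_0^tf(s)\,\frac{ds}{s^{1/2}}+\int_t^\infty\left(1+\log\frac st\right)^6f(s)\,\frac{ds}s,
$$
which is the claimed inequality. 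I do not expect any serious obstacle here: the only step carrying genuine content is the iteration $M^7u\le\|u\|_{A_1}^7u$ together with the correct identification of the triple $(p_0,\alpha,\varphi)=(2,1,x^7)$; once Theorem~\ref{Teo-main2-bis} is invoked, everything else is the bookkeeping of exponents and the routine verification that $x^7$ is admissible.
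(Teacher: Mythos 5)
Your proposal is correct and follows exactly the route the paper intends: iterate $Mu\le\|u\|_{A_1}u$ to get $M^7u\le\|u\|_{A_1}^7u$, read off $T_m\colon L^2(u)\to L^2(u)$ with constant controlled by $\|u\|_{A_1}^7$, and apply Theorem~\ref{Teo-main2-bis} with $(p_0,\alpha,\varphi)=(2,1,x^7)$, so that $\tilde\varphi(x)=x^6$. The only (harmless) looseness, shared with the paper, is that the squared $L^2$ inequality actually gives operator norm $\lesssim\|u\|_{A_1}^{7/2}$, so one could even take $\varphi(x)=x^{7/2}$ and improve the exponent $6$ to $5/2$; your bound as stated is nonetheless valid.
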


In this context of Fourier multipliers, let us now consider, for each $\gamma, \beta \in \mathbb R$,   the class $\mathcal C(\gamma, \beta)$ of bounded functions $m:\mathbb R\rightarrow \mathbb C$ for which $$\text{supp}(m) \subseteq \{\xi : |\xi|^\gamma \geq 1\}, \qquad \sup_{\xi \in \mathbb R} |\xi|^\beta |m(\xi)| < \infty,$$ and $$\sup_{R^\gamma \geq 1}\, \sup_{     I\subseteq [R, 2R], \, \ell(I) = R^{-\gamma + 1} } R^\beta \int_{\pm I}|m'(\xi)|\,d\xi < \infty.$$

\begin{proposition}[\cite{bJ:bJ}]

\noindent Let $\gamma, \beta \in \mathbb R$ such that $\gamma > 2\beta$. If $m \in \mathcal{C}(\gamma, \beta)$ then, for every measurable function $f$ and every positive locally integrable function  $u$, 
$$\int_\mathbb R |T_mf(x)|^2u(x)\,dx \leq C\int_\mathbb R |f(x)|^2M^6\left( \left( M^5u^{\frac \gamma{2\beta}} \right)^{\frac {2\beta}\gamma}\right)(x)\,dx.
$$

\end{proposition}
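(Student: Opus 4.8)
The plan is to perform a two-stage Littlewood--Paley decomposition of $m$ — first into dyadic frequency annuli and then, inside each annulus, into the short intervals on which the variation hypothesis provides control — and to absorb the resulting lack of orthogonality into maximal functions acting on $u$, thereby producing the composition $M^6\big((M^5(\cdot)^{\gamma/(2\beta)})^{2\beta/\gamma}\big)$.

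First I would fix a smooth partition $1=\sum_{k}\phi(2^{-k}\xi)$ adapted to the annuli $|\xi|\sim 2^k$, write $m=\sum_k m_k$ with $m_k=m\,\phi(2^{-k}\cdot)$, and observe that, by the support condition on $m$, only scales with $2^{k\gamma}\gtrsim 1$ appear, each $T_{m_k}f$ being frequency-localised to $|\xi|\sim R:=R_k=2^k$. A weighted square-function inequality for these lacunary pieces — which costs a fixed number of copies of $M$ on the weight, since iterated maximal functions are $A_1$ weights — reduces the problem to controlling, for some fixed $a$,
$$
\int_{\mathbb{R}}|T_mf|^2u\ \lesssim\ \sum_k\int_{\mathbb{R}}|T_{m_k}f|^2\,M^{a}u .
$$

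Next, for a fixed scale $R=R_k$ I would subdivide the annulus $|\xi|\sim R$ into $N\approx R^{\gamma}$ intervals $I_j$ of length $R^{1-\gamma}$ and set $m_{k,j}=m_k\,\eta_j$ for a smooth bump $\eta_j$ adapted to $I_j$. The hypotheses $|\xi|^{\beta}|m(\xi)|\lesssim 1$ and $\int_{I_j}|m'|\lesssim R^{-\beta}$ say precisely that, after translating $I_j$ to the origin and rescaling by its length, $m_{k,j}$ is a fixed smooth bump multiplied by $R^{-\beta}$; consequently $T_{m_{k,j}}$ is, up to a modulation, convolution with an $L^1$-normalised kernel of total mass $\lesssim R^{-\beta}$ concentrated at spatial scale $R^{\gamma-1}$, whence
$$
|T_{m_{k,j}}f(x)|^2\ \lesssim\ R^{-2\beta}\,\big(\Phi_{R^{\gamma-1}}*|f|^2\big)(x),
$$
with $\Phi_{R^{\gamma-1}}\ge 0$ an $L^1$-normalised bump at scale $R^{\gamma-1}$. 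Since $T_{m_{k,j}}f$ is the $I_j$-Littlewood--Paley piece of $T_{m_k}f$, the weighted Rubio de Francia inequality for arbitrary intervals (used at exponent $2$, costing two further copies of $M$ — the source of the $M^6$) allows one to pass from $T_{m_k}f$ to the sum over $j$. Adding the single-bump bound over the $N\approx R^{\gamma}$ indices would cost a factor $R^{\gamma}\cdot R^{-2\beta}$, which is not summable in $k$ once $\gamma>2\beta$; the remedy is not to estimate each piece on $L^2$ but to interpolate, across the $I_j$'s, between the $L^1$-kernel bound and the variation bound, the optimal exponents — admissible exactly because $\gamma>2\beta$, so that $\gamma/(2\beta)>1$ — replacing the plain average of the weight $v$ by $\big(M_{R^{\gamma-1}}\!\big(v^{\gamma/(2\beta)}\big)\big)^{2\beta/\gamma}$. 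Summing over $k$ then collapses the localised scale-$R_k^{\gamma-1}$ averages into ordinary maximal operators, and collecting all copies of $M$ from the two orthogonality steps yields the stated inequality.

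The step I expect to be the main obstacle is exactly this single-annulus estimate: arranging the exponents so that the gain $R^{-2\beta}$ beats the number $R^{\gamma}$ of short intervals — which is where $\gamma>2\beta$ is used and where the unusual maximal operator $v\mapsto\big(Mv^{\gamma/(2\beta)}\big)^{2\beta/\gamma}$ is forced — while simultaneously handling the $R^{\gamma}$ frequency pieces on $L^2(v)$ via the weighted Rubio de Francia Littlewood--Paley inequality for arbitrary intervals, with enough control of the concentration scales across $k$ for the final sum to telescope.
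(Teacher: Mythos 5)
The paper does not actually prove this proposition: it is imported verbatim from Bennett \cite{bJ:bJ} and used as a black box, so there is no internal argument to compare yours against. That said, your sketch does reproduce the broad architecture of Bennett's proof: dyadic annuli, a second decomposition into the intervals of length $R^{1-\gamma}$ on which the variation hypothesis acts, weighted Littlewood--Paley inequalities that each cost iterated copies of $M$ on the weight, and a local H\"older step with exponent $\gamma/(2\beta)>1$ that manufactures the operator $v\mapsto\bigl(Mv^{\gamma/(2\beta)}\bigr)^{2\beta/\gamma}$.

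There is, however, one load-bearing claim that is false as stated. You assert that after translating and rescaling $I_j$ to unit length, $m_{k,j}$ becomes a fixed smooth bump times $R^{-\beta}$, hence that $T_{m_{k,j}}$ is convolution with an $L^1$-normalised kernel of mass $\lesssim R^{-\beta}$ and that $|T_{m_{k,j}}f|^2\lesssim R^{-2\beta}\,\Phi_{R^{\gamma-1}}*|f|^2$. The hypotheses only give $\sup_{I_j}|m|\lesssim R^{-\beta}$ and $\int_{I_j}|m'|\lesssim R^{-\beta}$; rescaling produces a function with small $L^\infty$ norm and small total variation but with no control on higher derivatives, and the inverse Fourier transform of a bounded-variation function supported on an interval decays only like $|x|^{-1}$, so it need not be integrable. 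The model case $m=R^{-\beta}\chi_{I_j}$ is consistent with all the constraints, and its kernel is a modulated Dirichlet-type kernel; if your pointwise domination held, it would make the sharp frequency cutoff bounded on $L^\infty$, which it is not. The standard repair is to write $m$ on each $I_j$ by the fundamental theorem of calculus as $m(b_j)\chi_{I_j}-\int_{I_j}m'(t)\,\chi_{(a_j,t)}\,dt$, reducing to sharp cutoffs $S_J$, and then to invoke the two-weight inequality $\int|S_Jf|^2u\lesssim\int|f|^2M^ku$ (essentially the weighted bound for the Hilbert transform); several of the copies of $M$ hiding in $M^6$ and $M^5$ come from exactly this step, which your accounting omits. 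Finally, the part you yourself flag as the main obstacle --- beating the count $R^{\gamma}$ of short intervals with the gain $R^{-2\beta}$ via local H\"older/Bernstein at scale $R^{\gamma-1}$, which is precisely where $\gamma>2\beta$ and the exotic maximal operator enter --- is left as a statement of intent rather than an argument, so the sketch cannot be regarded as a proof.
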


Therefore, under the hypotheses of the previous result, $$T_m:L^2(u^\frac {2\beta}\gamma) \longrightarrow L^2(u^\frac {2\beta}\gamma), \qquad C\left(\frac{\gamma}{\gamma - 2\beta}\right)^6\norm{u}_{A_1}^\frac {10\beta}\gamma,$$ so that, as a consequence of Theorem~\ref{Teo-main2-bis}, we have the following:

\begin{corollary}
For every $t > 0$ and every measurable function $f$,
$$
\left( T_m f \right)^*(t) \lesssim \frac 1{t^\frac 12} \int_0^tf^*(s)\,\frac{ds}{t^\frac 12} + \frac 1{t^{\frac{\gamma - 2\beta}{2\gamma}}}\int_t^\infty \left(1 + \log\frac st\right)^{10} f^*(s)\,\frac{ds}{s^{\frac{\gamma + 2\beta}{2\gamma}}}.
$$
\end{corollary}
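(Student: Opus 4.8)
The plan is to deduce the stated pointwise estimate by combining Theorem~\ref{main2} with the implication in Theorem~\ref{Teo-2} that produces the pointwise bound, both applied with the exponents $q_1=p_0$ and $q_2=\frac{p_0}{1-\alpha}$ and with the function $\overline{\varphi}$ of \eqref{overlinevarphi} playing the role of the admissible function. These exponents satisfy $1\le q_1<q_2\le\infty$, with $q_2<\infty$ when $0<\alpha<1$ and $q_2=\infty$ when $\alpha=1$, and $q_2>1$ in any case.

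The hypothesis of Theorem~\ref{Teo-main2-bis} is precisely the hypothesis of Theorem~\ref{main2} for these $\alpha,p_0$ and $\varphi$ (recall that an admissible function is increasing), so Theorem~\ref{main2} gives, for every $\w\in B^{\cR}_{\frac 1{p_0}}\cap B^*_{\frac{p_0}{1-\alpha}}$,
$$
\|T\|_{\llouno\to\lloinuno}\ \lesssim\ [\w]_{B^{\cR}_{\frac 1{p_0}}}^{\frac 1{p_0}}\,\overline{\varphi}\!\left(C_2\,[\w]_{B^*_{\frac{p_0}{1-\alpha}}}\right)\ \lesssim\ [\w]_{B^{\cR}_{\frac 1{p_0}}}^{\frac 1{p_0}}\,\overline{\varphi}\!\left([\w]_{B^*_{\frac{p_0}{1-\alpha}}}\right),
$$
where the inner constant is absorbed by \eqref{por la logconcavidad2}. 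This is exactly the family of weighted weak-type bounds demanded in the hypothesis of Theorem~\ref{Teo-2} with $q_1=p_0$, $q_2=\frac{p_0}{1-\alpha}$ and admissible function $\overline{\varphi}$, so that theorem yields $(Tf)^*(t)\lesssim S_{p_0,\,\frac{p_0}{1-\alpha},\,\overline{\varphi}}(f^*)(t)$ for every $t>0$.

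It then only remains to unwind the definition \eqref{P_Q_Definition}. The Hardy part $P_{p_0}(f^*)(t)=\frac{1}{t^{1/p_0}}\int_0^t f^*(s)\,\frac{ds}{s^{1-1/p_0}}$ is already the first term on the right-hand side of the stated inequality. For the conjugate part I distinguish two cases: if $0<\alpha<1$ then $q_2<\infty$, $\frac{1}{q_2}=\frac{1-\alpha}{p_0}$, and $\overline{\varphi}(x)=\varphi\big(x^{p_0/\alpha}\big)=\widetilde{\varphi}(x)$ by definition; if $\alpha=1$ then $q_2=\infty$, $\frac{1}{q_2}=0=\frac{1-\alpha}{p_0}$, the kernel carries the extra factor $\big(1+\log\frac st\big)^{-1}$, and $\big(1+\log\frac st\big)^{-1}\overline{\varphi}\big(1+\log\frac st\big)=\big(1+\log\frac st\big)^{-1}\varphi\big(1+\log\frac st\big)=\widetilde{\varphi}\big(1+\log\frac st\big)$. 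In both cases $S_{p_0,\,\frac{p_0}{1-\alpha},\,\overline{\varphi}}(f^*)(t)$ coincides with the right-hand side of the claimed estimate, which finishes the proof.

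The substance of the argument is carried entirely by the cited Theorems~\ref{main2} and \ref{Teo-2}; the one genuinely new point, needed in order to invoke Theorem~\ref{Teo-2}, is that $\overline{\varphi}$ is admissible. When $\alpha=1$ this is vacuous since $\overline{\varphi}=\varphi$. When $0<\alpha<1$ one has $\overline{\varphi}(1)=\varphi(1)=1$, and the two-sided control \eqref{la espada y la pared} passes to $\overline{\varphi}$ immediately by the chain rule, $\frac{\overline{\varphi}'(x)}{\overline{\varphi}(x)}=\frac{p_0}{\alpha}\,x^{\frac{p_0}{\alpha}-1}\,\frac{\varphi'(x^{p_0/\alpha})}{\varphi(x^{p_0/\alpha})}$, so that \eqref{la espada y la pared} holds for $\overline{\varphi}$ with $\gamma,\beta$ replaced by $\frac{p_0}{\alpha}\gamma,\frac{p_0}{\alpha}\beta$ (and hence \eqref{por la logconcavidad2} for $\overline{\varphi}$, used above) --- and this derivative control is precisely the property that the lemmas on admissible functions used in the proof of Theorem~\ref{Teo-2} actually invoke. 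The remaining requirement of Definition~\ref{admissible}, namely log-concavity of $\overline{\varphi}$, is the delicate point I expect to cause real work: composing a log-concave function with the power $x\mapsto x^{p_0/\alpha}$ of exponent larger than $1$ need not preserve log-concavity in general, so one must verify it survives here (or confirm that log-concavity plays no role in the present deduction).
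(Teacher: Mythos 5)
Your write-up is, in substance, a proof of Theorem \ref{Teo-main2-bis} in the abstract (and it coincides with the paper's own one-line proof of that theorem: combine Theorems \ref{main2} and \ref{Teo-2} after checking that $\overline{\varphi}$ is admissible). But the statement to be proved is the corollary for Fourier multipliers $m\in\mathcal C(\gamma,\beta)$, and the specialization to $T_m$ --- which is the entire content of the corollary --- is missing. Concretely, two steps are absent. First, one must pass from the Fefferman--Stein type inequality of the preceding proposition to the weighted bound $T_m\colon L^2(u^{2\beta/\gamma})\to L^2(u^{2\beta/\gamma})$ with constant $C(\gamma/(\gamma-2\beta))^6\,\norm{u}_{A_1}^{10\beta/\gamma}$ for $u\in A_1$ (replace $u$ by $u^{2\beta/\gamma}$ in the proposition, use $M^5u\le \norm{u}_{A_1}^5u$ and the fact that $(M^5u)^{2\beta/\gamma}$ is an $A_1$ weight with constant of order $\gamma/(\gamma-2\beta)$ because $2\beta/\gamma<1$), and then use $L^{2,1}\subset L^2\subset L^{2,\infty}$ to see that the hypothesis of Theorem \ref{Teo-main2-bis} holds. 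Second, and more importantly, one must identify the parameters: $p_0=2$, $\alpha=2\beta/\gamma$, $\varphi(x)=x^{10\beta/\gamma}$, so that $p_0/\alpha=\gamma/\beta$, $\tilde\varphi(x)=\varphi\big(x^{p_0/\alpha}\big)=x^{10}$, $\tfrac{1-\alpha}{p_0}=\tfrac{\gamma-2\beta}{2\gamma}$ and $1-\tfrac{1-\alpha}{p_0}=\tfrac{\gamma+2\beta}{2\gamma}$. Your closing assertion that $S_{p_0,\frac{p_0}{1-\alpha},\overline{\varphi}}(f^*)(t)$ ``coincides with the right-hand side of the claimed estimate'' is precisely this computation, and without it the stated exponents $\tfrac12$, $\tfrac{\gamma\pm2\beta}{2\gamma}$ and the power $10$ are never obtained. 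As written, the argument proves a template, not the inequality in the statement.

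On the point you flag as delicate: for this corollary $\varphi$ is a pure power, so $\overline{\varphi}(x)=\varphi\big(x^{\gamma/\beta}\big)=x^{10}$ and log-concavity (indeed full admissibility) is immediate; the question of whether log-concavity survives composition with $x\mapsto x^{p_0/\alpha}$ simply does not arise here. That concern is a legitimate remark about the general Theorem \ref{Teo-main2-bis}, where the paper asserts without detail that $\overline{\varphi}$ ``satisfies the same properties as $\varphi$'', but it is moot for the statement at hand --- and had you carried out the parameter identification above, you would have seen this.
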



\begin{thebibliography}{99}

    \bibitem{a:a} 
K.F. Andersen: \textit{Weighted generalized Hardy inequalities for nonincreasing functions}, Canad. J. Math. \textbf{43} (1991), no. 6, 1121-1135.


    \bibitem{am:am} 
M.A. Ari\~{n}o and B. Muckenhoupt: \textit{Maximal functions on classical  Lorentz spaces and Hardy's inequality with weights for nonincreasing functions}, Trans. Amer. Math. Soc. \textbf{320} (1990), no. 2, 727-735. 

    \bibitem{bc:bc}
S. Baena-Miret and M.J. Carro: \textit{Boundedness of sparse and rough operators on weighted Lorentz spaces}, to appear in J. Fourier Anal. Appl. (2021). 


    \bibitem{bc:bc2}
S. Baena-Miret and M.J. Carro: \textit{Weighted estimates for Bochner-Riesz operators on Lorentz spaces}, Preprint (2021). 



    \bibitem{bJ:bJ}
J. Bennett: \textit{Optimal control of singular Fourier multipliers by maximal operators}, Anal. PDE \textbf{7} (2014), no. 6, 1317-1338.


    \bibitem{bs:bs}
C. Bennett and R. Sharpley: \textit{Interpolation of Operators}, Pure and Applied Mathematics, \textbf{129}, Academic Press, Inc., Boston, MA, 1988.



    \bibitem{b:b}
S. Bochner: \textit{Summation of multiple Fourier series by spherical means}, Trans. Amer. Math. Soc. \textbf{40} (1936), no. 2, 175-207.



    \bibitem{cz:cz}
A.P. Calder\'on and A. Zygmund: \textit{On the existence of certain singular integrals}, Acta Math. \textbf{88} (1952), 85-139.



    \bibitem{cz:cz2}
A.P. Calder\'on and A. Zygmund: \textit{On singular integrals}, Amer. J. Math. \textbf{78} (1956), 289-309.
%
%    \bibitem{cs:cs1} 
%L. Carleson and P. Sj\"{o}lin: \textit{Oscillatory integrals and a multiplier problem for the disc},
%Studia Math. \textbf{44} (1972), 287-299.
%
%
%
%    \bibitem{cd:cd} 
%M.J. Carro and C. Domingo-Salazar: \textit{Weighted weak-type (1,1) estimates for radial Fourier multipliers via extrapolation theory}, J. Anal. Math. \textbf{138}, (2019), no. 1 , 83-105.


%\bibitem{cdl:cdl}
%M.J. Carro; J. Duoandikoetxea and M. Lorente: \textit{Weighted estimates in a limited range with applications to the Bochner-Riesz operators}, Indiana Univ. Math. J. \textbf{61} (2012), no. 4, 1485-1511. 



    \bibitem{cgs:cgs}
M.J. Carro; A. Garc\'ia del Amo and J. Soria: \textit{Weak-type weights and normable Lorentz spaces}, Proc. Amer. Math. Soc. \textbf{124} (1996), no. 3, 849-857. 


    
%    \bibitem{clss:clss}
%M.J. Carro; H. Li; J. Soria and Q. Sun: 
%\textit{Calder\'on-Zygmund operators and commutators on weighted Lorentz spaces}, to appear in the J. Geom. Anal. (2021).



%    \bibitem{cpss:cpss} 
%M.J. Carro; L. Pick; V.D. Stepanov and J. Soria: \textit{On embeddings between classical Lorentz spaces}, Math. Inequal. Appl. \textbf{4}, no. 3, (2001), no. 877, 397-428. 



    \bibitem{crs:crs}
M.J. Carro; J.A. Raposo and J. Soria: \textit{Recent Developments in the Theory of Lorentz Spaces and Weighted Inequalities},  Mem. Amer. Math. Soc. \textbf{187} (2007), no. 877, xii+128 pp. 



    \bibitem{cs:cs}
M.J. Carro and J. Soria: \textit{Boundedness of some integral operators}, Canad. J. Math. \textbf{45} (1993), no. 6, 1155-1166. 


    \bibitem{chw:chw}
S. Chanillo and R.L. Wheeden: \textit{Some weighted norm inequalities for the area integral}, Univ. Math. J.  \textbf{36} (1987), no. 2, 277-294. 


    \bibitem{c:c2} 
M. Christ: \textit{Michael On almost everywhere convergence of Bochner-Riesz means in higher dimensions}, Proc. Amer. Math. Soc. \textbf{95} (1985), no. 1, 16-20.


    \bibitem{c:c} 
M. Christ: \textit{Weak type (1,1) bounds for rough operators}, Ann. of Math. (2) \textbf{128} (1988), no. 1, 19-42.



    \bibitem{cr:cr}
M. Christ and J.L. Rubio de Francia: \textit{Weak type (1,1) bounds for rough operators. II}, Invent. Math. \textbf{93} (1988), no. 1, 225-237.


    \bibitem{cA:cA}
A. Cordoba: \textit{The Kakeya maximal function and the spherical summation multipliers}, Amer. J. Math. \textbf{99} (1977), no. 1, 1-22.



    \bibitem{cf:cf}
A. Cordoba and C. Fefferman: \textit{A weighted norm inequality for singular integrals}, Studia Math. \textbf{57} (1976), no. 1, 97-101.



%    \bibitem{cmp:cmp} 
%D. Cruz-Uribe, J. M. Martell, C. P\'erez, \textit{Weights, extrapolation and the theory of Rubio de Francia}, Operator Theory: Advances and Applications, \textbf{215}. Birkh\"auser/Springer Basel AG, Basel (2011) xiv+280.

    \bibitem{JD3}  
J. Duoandikoetxea: \textit{Weighted norm inequalities for homogeneous singular integrals}, Trans. Amer. Math. Soc. \textbf{336} (1993), no. 2, 869-880.

\bibitem{d:d} 
J. Duoandikoetxea: \textit{Extrapolation of weights revisited: new proofs and sharp bounds}, J. Funct. Anal. \textbf{260} (2011), no. 6, 1886-1901.




    \bibitem{JDJLRF} 
J. Duoandikoetxea and J.L. Rubio de Francia: \textit{Maximal and singular integral operators via Fourier transform estimates}, Invent. Math. \textbf{84} (1986), no. 3, 541-561.


    \bibitem{f:f}
C. Fefferman: \textit{The multiplier problem for the ball}, Ann. of Math. (2) \textbf{94} (1971), 330-336.




    \bibitem{fs:fs}
C. Fefferman and E. M.  Stein: \textit{Some maximal inequalitiesl}, Amer. J. Math.  \textbf{93} (1971), 107-115.




%    \bibitem{fn:fn}
%D. Frey and B. Nieraeth: \textit{Weak and strong type $A_1 - A_\infty$ estimates for sparsely dominated operators}, J. Geom. Anal. \textbf{29} (2019), no. 1, 247-282.



    \bibitem{h:h} 
T.P. Hyt\"{o}nen: \textit{The sharp weighted bound for general Calder\'on-Zygmund operators}, Ann. of Math. (2) \textbf{175} (2012), no. 3, 1473-1506. 



    \bibitem{hp:hp}
T.P. Hyt\"{o}nen and C. P\'erez: \textit{The $L(\log L)^\varepsilon$ endpoint estimate for maximal singular integral operators}, J. Math. Anal. Appl. \textbf{428} (2015), no. 1, 605-626.



    \bibitem{hrt:hrt} 
T.P. Hyt\"{o}nen; L. Roncal and O. Tapiola: \textit{Quantitative weighted estimates for rough homogeneous singular integrals}, Israel J. Math. \textbf{218} (2017), no. 1, 133-164.



    \bibitem{kw:kw}
D.S. Kurtz and R.L. Wheeden: \textit{Results on weighted norm inequalities for multipliers}, Trans. Amer. Math. Soc. \textbf{255} (1979), 343-362.

%
%
%    \bibitem{kl:kl} 
%R. Kesler and M.T. Lacey: \textit{sparse endpoint estimates for Bochner-Riesz multipliers on the plane}, Collect. Math. \textbf{69} (2018), no. 3, 427-435.
%


    \bibitem{l:l3}
S. Lai: \textit{Weighted norm inequalities for general operators on monotone functions}, Trans. Amer. Math. Soc. \textbf{340} (1993), no. 2, 811-836.



    \bibitem{l:l} 
A.K. Lerner: \textit{On an estimate of Calder\'on-Zygmund operators by dyadic positive operators}, J. Anal. Math. \textbf{121} (2013), 141-161. 



    \bibitem{l:l2}
A.K. Lerner: \textit{A simple proof of the $A_2$ conjecture}, Int. Math. Res. Not. IMRN 2013, no. 14, 3159-3170.



%    \bibitem{lno:lno} 
%A.K. Lerner; F. Nazarov and S. Ombrosi: \textit{On the sharp upper bound related to the weak Muckenhoupt-Wheeden conjecture}, Anal. PDE \textbf{13} (2020), no. 6, 1939-1954.



    \bibitem{lop:lop}
A.K. Lerner; S. Ombrosi and C. P\'erez: \textit{Sharp $A_1$ bounds for Calder\'on-Zygmund operators and the relationship with a problem of Muckenhoupt and Wheeden}, Int. Math. Res. Not. IMRN (2008), no. 6, Art. ID rnm161, 11 pp. 



    \bibitem{lprr:lprr} 
K. Li; C. P\'erez;  I.P. Rivera-R\'ios and L. Roncal:  \textit{Weighted norm inequalities for rough singular integral operators}, J. Geom. Anal. \textbf{29} (2019), no. 3, 2526-2564.



    \bibitem{l:l1}
G. Lorentz: \textit{On the theory of spaces $\Lambda$}, Pacific J. Math. \textbf{1} (1951), 411-429. 



    \bibitem{m:m}
B. Muckenhoupt: \textit{Weighted norm inequalities for the Hardy maximal function}, Trans. Amer. Math. Soc. \textbf{165} (1972), 207-226.



%    \bibitem{p:p} 
%P\'erez, C.: Singular integrals and weights. \textit{Harmonic and geometric analysis}, 91-143, Adv. Courses Math. CRM Barcelona, Birkh\"auser/Springer Basel AG, Basel, 2015.



\bibitem{n:n} C.J. Neugebauer: \textit{Some classical operators on Lorentz space}, Forum Math. \textbf{4} (1992), no. 2, 135-146.


%
%    \bibitem{s:s2} 
%A. Seeger: \textit{Endpoint inequalities for Bochner-Riesz multipliers in the plane}, Pac. J. Math. \textbf{174} (2), 543-553 (1996).
%


%    \bibitem{ss:ss} 
%X. Shi and Q. Sun: \textit{Weighted norm inequalities for Bochner-Riesz operators and singular integral operators}, Proc. Amer. Soc. \textbf{116} (1992), 665-673.


    \bibitem{sE:sE}
E.M. Stein: \textit{Some problems in harmonic analysis}, Harmonic analysis in Euclidean spaces (Proc. Sympos. Pure Math., Williams Coll., Williamstown, Mass., 1978), Part 1, pp. 3-20, Proc. Sympos. Pure Math., XXXV, Part, Amer. Math. Soc., Providence, R.I., 1979.



    \bibitem{v:v} 
A.M. Vargas: \textit{Weighted weak type (1,1) bounds for rough operators}, J. London Math. Soc. (2) \textbf{54} (1996), no. 2, 297-310.



    \bibitem{DKW} 
D.K. Watson: \textit{Weighted estimates for singular integrals via Fourier transform estimates}, Duke Math. J. \textbf{60} (1990), no. 2, 389-399. 



\end{thebibliography}
\end{document}